\definecolor{TolDarkPurple}{HTML}{332288}
\definecolor{TolDarkBlue}{HTML}{6699CC}
\definecolor{TolLightBlue}{HTML}{88CCEE}
\definecolor{TolLightGreen}{HTML}{44AA99}
\definecolor{TolDarkGreen}{HTML}{117733}
\definecolor{TolDarkBrown}{HTML}{999933}
\definecolor{TolLightBrown}{HTML}{DDCC77}
\definecolor{TolDarkRed}{HTML}{661100}
\definecolor{TolLightRed}{HTML}{CC6677}
\definecolor{TolLightPink}{HTML}{AA4466}
\definecolor{TolDarkPink}{HTML}{882255}
\definecolor{TolLightPurple}{HTML}{AA4499}
\definecolor{InfernoMax}{rgb}{0.988362,0.998364,0.644924}
\definecolor{InfernoMin}{rgb}{0,0,0}
\colorlet{femFG}{TolDarkRed}
\colorlet{femBG}{TolLightRed}
\colorlet{mslrmFG}{TolDarkBlue}
\colorlet{mslrmBG}{TolLightBlue}
\pgfplotsset{compat=newest,
  FEMplot/.style={color=femFG,mark=x,mark options={solid,femFG}},
  MsLRMplot1/.style={color=mslrmFG,mark=*,mark options={solid,mslrmFG}},
  MsLRMplot2/.style={color=mslrmFG,mark=o,densely dotted,mark options={solid,mslrmFG}},
  MsLRMplot3/.style={color=mslrmFG,mark=diamond*,dashed,mark options={solid,mslrmFG}},
}
\newlength{\pictureDefaultHeight}
\newlength{\plotDefaultHeight}
\DeclarePairedDelimiter{\ceil}{\lceil}{\rceil}
\DeclarePairedDelimiter{\floor}{\lfloor}{\rfloor}
\DeclarePairedDelimiter{\abs}{\lvert}{\rvert}
\DeclarePairedDelimiter{\norm}{\lVert}{\rVert}
\DeclarePairedDelimiterX{\innerp}[2]{\langle}{\rangle}{#1\,\delimsize\vert\,\mathopen{}#2}
\DeclareMathOperator*{\essinf}{Ess\,Inf}
\DeclareMathOperator*{\esssup}{Ess\,Sup}
\DeclareMathOperator*{\vspan}{span}
\DeclareMathOperator*{\vdim}{dim}
\DeclareMathOperator*{\argmax}{argmax}
\DeclareMathOperator*{\cost}{cost}
\DeclareMathOperator*{\grad}{\nabla}
\newcommand{\diver}{\nabla\cdot}
\newcommand{\setN}{\ensuremath{\mathbb{N}}}
\newcommand{\setNsp}{\ensuremath{\mathbb{N}_{*}}} 
\newcommand{\setZ}{\ensuremath{\mathbb{Z}}}
\newcommand{\setR}{\ensuremath{\mathbb{R}}}
\newcommand{\setRsp}{\ensuremath{\mathbb{R}_{+*}}} 
\newcommand{\Id}{\ensuremath{\mathrm{Id}}}
\newcommand{\rrharpoons}{\stackunder[-2pt]{\ensuremath{\rightharpoonup}}{\ensuremath{\rightharpoondown}}}
\newcommand{\Hconv}{\ensuremath{\rrharpoons}}
\newcommand{\wconv}[1]{\ensuremath{\xrightharpoonup{#1}}}
\newcommand{\pderiv}[2]{\frac{\partial\,#1}{\partial\,#2}}
\newcommand{\Zint}[2]{\ensuremath{\mleft\{#1,\dots,#2\mright\}}}
\newcommand{\expec}[1]{\ensuremath{\mathds{E}(#1)}}
\newcommand\expec*[1]{\ensuremath{\mathds{E}\mleft(#1\mright)}}
\newcommand{\var}[1]{\ensuremath{\operatorname{\mathbb{V}ar}(#1)}}
\newcommand\var*[1]{\ensuremath{\operatorname{\mathbb{V}ar}\mleft(#1\mright)}}
\newcommand{\covar}[1]{\ensuremath{\operatorname{\mathbb{C}ov}(#1)}}
\newcommand\covar*[1]{\ensuremath{\operatorname{\mathbb{C}ov}\mleft(#1\mright)}}
\newcommand{\corr}[1]{\ensuremath{\operatorname{corr}(#1)}}
\newcommand\corr*[1]{\ensuremath{\operatorname{corr}(#1)}}
\newcommand{\deter}[1]{\ensuremath{\operatorname{det}(#1)}}
\newcommand\deter*[1]{\ensuremath{\operatorname{det}\mleft(#1\mright)}}
\newcommand{\rank}[1]{\ensuremath{\operatorname{rank}(#1)}}
\newcommand\rank*[1]{\ensuremath{\operatorname{rank}\mleft(#1\mright)}}
\newcommand{\wikns}{\ensuremath{\epsilon}}
\newcommand{\mapK}{\ensuremath{\tilde{K}}}
\newcommand{\qpK}{\ensuremath{\tilde{K}}}
\newcommand{\rvi}{\ensuremath{\tilde{X}}} 
\newcommand{\myas}{a.s\@ifnextchar.{}{.\@}}
\newcommand{\myae}{a.e\@ifnextchar.{}{.\@}}
\newcommand{\myiid}{i.i.d\@ifnextchar.{}{.\@}}
\newcommand{\myie}{i.e\@ifnextchar.{}{.\@}}
\newcommand{\myIe}{I.e\@ifnextchar.{}{.\@}}
\newcommand{\myeg}{e.g\@ifnextchar.{}{.\@}}
\newcommand{\myviz}{viz\@ifnextchar.{}{.\@}}
\theoremstyle{plain}
\newtheorem{proposition}{Proposition}
\newtheorem{theorem}{Theorem}
\theoremstyle{definition}
\newtheorem{definition}{Definition}
\theoremstyle{remark}
\newtheorem{remark}{Remark}
\numberwithin{equation}{section}
\crefname{subsection}{subsection}{subsections}
\Crefname{subsection}{Subsection}{Subsections}
\crefname{figure}{figure}{figures} 
\crefname{page}{p.}{pp.} 
\title{\bfseries\LARGE Tensor-based numerical method for stochastic homogenisation}
\author[1]{Quentin Ayoul-Guilmard\thanks{quentin.ayoul-guilmard@centraliens-nantes.net}}
\author[2]{Anthony Nouy\thanks{anthony.nouy@ec-nantes.fr}}
\author[1]{Christophe Binetruy}
\affil[1]{\'Ecole Centrale de Nantes, GeM UMR CNRS 6183, Nantes, France}
\affil[2]{\'Ecole Centrale de Nantes, LMJL UMR CNRS 6629, Nantes, France}
\date{}
\begin{document}

\maketitle

\begin{abstract}
  This paper addresses the complexity reduction of stochastic homogenisation of a class of random materials for   a stationary   diffusion equation. A cost-efficient approximation of the correctors is built using a method designed to exploit quasi-periodicity. Accuracy and cost reduction are investigated for local perturbations or small transformations of periodic materials as well as for materials with no periodicity but a mesoscopic structure, for which the limitations of the method are shown. Finally, for materials outside the scope of this method, we propose to use the approximation of homogenised quantities as control variates for variance reduction of a more accurate and costly Monte Carlo estimator   (using a multi-fidelity Monte Carlo method). The resulting cost reduction is illustrated in a numerical experiment with a control variate from weakly stochastic homogenisation for comparison, and the limits of this variance reduction technique are tested on materials without periodicity or mesoscopic structure.
\end{abstract}

\bigskip\noindent\textbf{Keywords:}   stochastic homogenisation, quasi-periodicity, tensor approximation, multiscale, multi-fidelity Monte Carlo.

\medskip{}\noindent\textbf{2010 Mathematics subject classification:}   15A69, 35B15, 35B27, 35R60, 65N30.

\section{Introduction}
\label{sec:intro}

Many industries show a growing interest in heterogeneous materials designed with a regular---even periodic---microscopic structure, such as most composite materials.
Prediction of the behaviour of such materials, either during production or intended use, raises various challenges.
Amongst those is the computational complexity induced by the scale difference between the size of the final component and the micro-structure that has to be represented.
Multiscale complexity is a long-standing topic for which various well documented methods such as Multiscale Finite Elements Method~\parencite[MsFEM, explained in][]{Efendiev2009a}, Heterogeneous Multiscale Method~\parencite[HMM, see][]{Abdulle2012} have been developed.
Unlike those general methods, a method designed specifically to break this complexity by exploiting quasi-periodicity was recently proposed by~\textcite{Ayoul-Guilmard2018a}.

Another long-standing approach to tackle this issue is to substitute a homogenised material in the simulations; even though some detailed information is lost, this is still a relevant strategy for many quantities of interest, as discussed by~\textcite[in][]{Murat1997}.
Although low for periodic materials, the homogenisation cost rises significantly with random periodicity loss  \parencite[see \myeg{}][]{Jikov1994}.
This is a barrier to random defect modelling in these simulations.

We propose in this paper to tackle this issue with an approximation of the corrector functions, the computation of which is the costliest part of the homogenisation process, using the method from~\textcite{Ayoul-Guilmard2018a}. \Cref{sec:homog-bg}
will introduce the necessary background on homogenisation for heterogeneous diffusion.
\Cref{sec:tens-struct-mult} will present the key points of the approximation method and illustrate its performance and limits on various numerical experiments.In order to improve on the accuracy of this surrogate model, we propose in \cref{sec:var-reduction} to combine it with another, costlier and more precise, within a multi-fidelity Monte Carlo method \parencite[such as described in][]{Peherstorfer2018a}. Its performance is compared with another surrogate model from~\textcite{Legoll2015a}.

\section{Periodic and stochastic homogenisation}
\label{sec:homog-bg}

\subsection{General homogenisation}
\label{sec:gen-homog}

Let $D\subset\setR^d$ an open bounded domain. We consider an heterogeneous stationary diffusion problem
\begin{align}
  \label{eq:diff-strong}
  -\diver(K \grad u) & = f, \\
  \label{eq:diff-bc}
  u_{|\partial D} & = 0,
\end{align}
with $f\in H^{-1}(D)$.
$K$ is assumed to have variations at a small scale on $D$, which makes practical computations expensive. We wish to substitute to~\cref{eq:diff-strong} a problem without such variations that preserves certain quantities of interest. 
For any $0<\alpha<\beta$ we let,
\begin{multline*}
  \mathcal{M}(\alpha,\beta) := 
  \bigl\{
  a\in L^\infty(D;\setR^{d\times d}) : \innerp{a(x)y}{y} \geqslant \alpha\|y\|^2 \\
  \text{ and } \|a(x)y\| \leqslant \beta\|y\|, \forall y\in \setR^d, \text{ for \myae{} } x\in D
  \bigr\}.
\end{multline*}
\begin{definition}[H-convergence]
  \label{def:h-conv}
  Let $f\in H^{-1}(D)$, $(K_n)_{n\in\setN} \in \mathcal{M}(\alpha,\beta)^\setN$ and  $K_\star \in \mathcal{M}(\alpha',\beta')$ with $0<\alpha<\beta$ and $0<\alpha'<\beta'$.
  Let $u_\star\in H^1(D)$ be solution to
  \begin{gather}
    \begin{dcases}
      \label{eq:diff-homog}
      -\diver(K_\star\grad u_\star) = f  \\ 
      \cref{eq:diff-bc}
    \end{dcases}	
  \end{gather}
  and, for any $n\in\setN$, let $u_n\in H^1(D)$ be solution to
  \begin{gather*}
    \begin{dcases} 
      -\diver(K_n\grad u_n) = f \\ 
      \cref{eq:diff-bc} 
    \end{dcases}
  \end{gather*}
  We say that $(K_n)_{n\in\setN}$ H-converges to $K_\star$ and note $K_n \Hconv K_\star$ when
  \begin{gather*}
    u_n \wconv{H^1(D)} u_\star ,\\ 
    K_n\grad u_n \wconv{L^2(D;\setR^d)} K_\star\grad u_\star .
  \end{gather*}
\end{definition}
H-convergence is a generalisation by~\textcite{Murat1997} of the G-convergence to non-symmetric operators. G-convergence was originally introduced by~\textcite{Spagnolo1968}. The reader may consult~\textcite{Defranceschi1993} for a detailed explanation of both.

\begin{theorem}[Fundamental theorem of homogenisation]
  \label{prop:fth}
  Let
  $(K_n)_{n\in\setN}\in \mathcal{M}(\alpha,\beta)^\setN$ with $0<\alpha<\beta$. 
  There exists a subsequence $(K_{s(n)})_{n\in\setN}$ and an  operator $K_\star \in\mathcal{M}(\alpha,\beta^2\alpha^{-1})$ such that
  \begin{gather*}
    \begin{array}{>{\displaystyle}r>{\displaystyle}c>{\displaystyle}l}
      K_{s(n)} & \Hconv & K_\star , \\
      K_{s(n)}\grad u_{s(n)}\cdot\grad u_{s(n)} & \wconv{} & K_\star\grad u_\star\cdot\grad u_\star \text{ weakly-$\ast$ in } \mathcal{D}'(\setR^d) ,\\
      \text{and } \int_D K_{s(n)}\grad u_{s(n)}\cdot\grad u_{s(n)} & \rightarrow & \int_D K_\star\grad u_\star\cdot\grad u_\star,      
    \end{array}
  \end{gather*}
  where $\mathcal{D}'(\setR^d)$ denotes the space of distributions on $\setR^d$. $K_\star$ is commonly called the \enquote{homogenised operator} of $K$. 
\end{theorem}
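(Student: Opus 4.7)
The plan is to split the claim into two parts: first, the H-compactness of the sequence $(K_n)$, which yields both the subsequence $(K_{s(n)})$ and the limit $K_\star\in\mathcal{M}(\alpha,\beta^2\alpha^{-1})$ together with the weak convergences appearing in \cref{def:h-conv}; second, the strengthened convergence of the energy density $K_n\grad u_n\cdot\grad u_n$, both distributionally and in the sense of integration over $D$.

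For the compactness step, I would rely on Tartar's oscillating test function method. Fix $f\in H^{-1}(D)$: by Lax--Milgram each $u_n\in H^1_0(D)$ is uniquely defined with $\norm{u_n}_{H^1_0}\leqslant\alpha^{-1}\norm{f}_{H^{-1}}$ thanks to the uniform coercivity of $K_n$. A first extraction gives $u_n\rightharpoonup u_\star$ weakly in $H^1_0(D)$, and since $\norm{K_n\grad u_n}_{L^2}\leqslant\beta\alpha^{-1}\norm{f}_{H^{-1}}$ a further extraction produces a weak limit $\sigma_\star$ in $L^2(D;\setR^d)$ with $-\diver\sigma_\star=f$. The crux is to identify $\sigma_\star=K_\star\grad u_\star$ for an operator $K_\star$ independent of $f$. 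To this end I would construct, for each $\lambda\in\setR^d$, oscillating test functions $w_n^\lambda\in H^1(D)$ solving the adjoint problem $-\diver(K_n^T\grad w_n^\lambda)=-\diver(M^T\lambda)$, where the candidate $M$ is built column by column along a countable dense set of directions via a diagonal extraction. Passing to the limit in $\int_D K_n\grad u_n\cdot\grad w_n^\lambda$ after integration by parts yields $\sigma_\star\cdot\lambda=K_\star\grad u_\star\cdot\lambda$ for all $\lambda\in\setR^d$. Coercivity with constant $\alpha$ survives by lower semicontinuity of the associated quadratic form, and the upper bound $\beta^2\alpha^{-1}$ comes from testing with $w_n^\lambda$ itself and combining the bound $\beta$ on $K_n$ with the coercivity $\alpha$ of $K_n^T$.

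The distributional convergence of the energy density then follows from the Murat--Tartar div--curl lemma applied to $\grad u_n$ (curl-free and weakly convergent in $L^2(D;\setR^d)$) and $K_n\grad u_n$ (weakly convergent in $L^2(D;\setR^d)$, with divergence $-f$ bounded in $H^{-1}(D)$). For the integral convergence I would use the energy identity: taking $u_n$ as a test function gives $\int_D K_n\grad u_n\cdot\grad u_n=\langle f,u_n\rangle_{H^{-1},H^1_0}$, and since $u_n\to u_\star$ strongly in $L^2(D)$ by Rellich--Kondrachov the right-hand side converges to $\langle f,u_\star\rangle=\int_D K_\star\grad u_\star\cdot\grad u_\star$.

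The main obstacle is squarely the compactness step: constructing the test functions $w_n^\lambda$ uniformly in $\lambda$, arranging the diagonal extraction so that $K_\star$ is defined simultaneously and independently of $\lambda$ (and of $f$), and verifying that it indeed lies in $\mathcal{M}(\alpha,\beta^2\alpha^{-1})$. Everything else reduces to standard applications of weak compactness, compensated compactness, and compact Sobolev embeddings.
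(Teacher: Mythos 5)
Your proposal is correct and follows essentially the same route as the paper, whose proof is a citation to Murat and Tartar's lecture notes, where the theorem is established precisely by this combination of Tartar's oscillating test functions for the H-compactness step, the div--curl lemma for the distributional convergence of the energy density, and the energy identity for the convergence of the integral. One minor remark: in the last step the convergence $\langle f,u_n\rangle_{H^{-1},H^1_0}\to\langle f,u_\star\rangle_{H^{-1},H^1_0}$ follows directly from the weak convergence $u_n\rightharpoonup u_\star$ in $H^1_0(D)$, since $f$ is a fixed element of the dual; the Rellich--Kondrachov argument you invoke only covers the case $f\in L^2(D)$.
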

\begin{proof}
  See~\textcite[pp. 31--36]{Murat1997}.
\end{proof}

\begin{theorem}[Corrector theorem]
  \label{prop:corrector}
  Let $(K_n)_{n\in\setN}\in \mathcal{M}(\alpha,\beta)^\setN$ with $0<\alpha<\beta$.
  For any $i\in\Zint{1}{d}$, there exists a sequence $(w^i_n)_{n\in\setN} \in H^1(D)^\setN$ such that
  \begin{align}
    w^i_{s(n)} & \wconv{H^1(D)} 0 
                 , \notag\\
    -\diver\mleft(K_{s(n)}(e_i+\grad w^i_{s(n)})\mright) & \xrightarrow{ H^{-1}(D)} 0
                                                         , \notag\\
    \label{eq:tc3}
    \text{and } \grad u_{s(n)} - (\Id+\grad w_{s(n)})\grad u_\star & \xrightarrow{L^1(D)^d} 0
  \end{align}
  where $s$ defines the same subsequence as in \cref{prop:fth}.
  These functions $w^i_n$ are called \enquote{correctors} for they provide the necessary correction to $\grad u_{s(n)}$, in the sense of \cref{eq:tc3}, to converge strongly towards $\grad u_\star$. Additionally, they can be used to deduce $\grad u_{s(n)}$ from $\grad u_\star$.
  A direct consequence is that
  \begin{gather*}
    K_{s(n)}(\Id+\grad w_{s(n)}) \wconv{L^2(D)^{d\times d}} K_\star .
  \end{gather*}
\end{theorem}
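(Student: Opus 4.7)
The plan is to follow the classical Murat--Tartar strategy: explicitly construct the correctors by solving auxiliary boundary value problems driven by $K_n$, establish the weak convergence $w^i_{s(n)}\wconv{H^1(D)}0$ directly from \cref{prop:fth}, and then rely on compensated compactness (the div--curl lemma) to upgrade this to the strong $L^1$ convergence of the corrected gradient.

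For the construction, for each $i\in\Zint{1}{d}$ I would let $v^i_n\in H^1(D)$ solve
\begin{equation*}
  -\diver(K_n\grad v^i_n) = -\diver(K_\star e_i),\qquad v^i_n - x_i \in H^1_0(D),
\end{equation*}
where $x_i$ denotes the $i$-th coordinate function (truncated outside a neighbourhood of $\overline{D}$ if needed). The ellipticity of $\mathcal{M}(\alpha,\beta)$ gives a uniform $H^1(D)$ bound. Applying \cref{prop:fth} to this family along the same extracted subsequence $s$, the H-convergence $K_{s(n)}\Hconv K_\star$ forces $v^i_{s(n)}\wconv{H^1(D)} v^i_\star$, where $v^i_\star$ solves the same problem with $K_\star$; by design the right-hand side was chosen so that $v^i_\star = x_i$. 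Setting $w^i_n := v^i_n - x_i$ yields the first conclusion. For the divergence condition, by construction $K_n(e_i+\grad w^i_n)=K_n\grad v^i_n$ has fixed divergence $-\diver(K_\star e_i)$; when $K_\star$ is constant (the case of interest in the sequel) this vanishes identically, and otherwise a standard localisation by cut-off against a test function in $\mathcal{D}(D)$ yields the asymptotic $H^{-1}(D)$ statement.

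The substantive step, and the main obstacle, is \cref{eq:tc3}. The idea is to introduce the non-negative quantity
\begin{equation*}
  I_n := \int_D K_{s(n)} \bigl(\grad u_{s(n)} - (\Id+\grad w_{s(n)})\grad u_\star\bigr) \cdot \bigl(\grad u_{s(n)} - (\Id+\grad w_{s(n)})\grad u_\star\bigr)\, dx
\end{equation*}
and show $I_n\to 0$. Expanding the quadratic form produces several cross terms of the shape $\langle \text{flux}_n,\text{gradient}_n\rangle$, each pairing a weakly convergent flux sequence (with divergence controlled in $H^{-1}(D)$ by the PDEs satisfied by $u_n$ and $v^i_n$) with a gradient sequence (whose curl vanishes identically). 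The Murat--Tartar div--curl lemma then identifies each weak limit with the product of the individual weak limits, and a direct computation shows that all contributions cancel using the fluxes $K_n\grad v^i_n \wconv{L^2(D)^d} K_\star e_i$ already obtained by H-convergence. Uniform ellipticity $K_n\in\mathcal{M}(\alpha,\beta)$ then converts $I_n\to 0$ into strong $L^2(D)^d$ convergence, which in turn implies the announced $L^1(D)^d$ convergence.

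Finally, the closing identity $K_{s(n)}(\Id+\grad w_{s(n)})\wconv{L^2(D)^{d\times d}}K_\star$ is an immediate corollary: its $i$-th column is exactly $K_n\grad v^i_n$, whose weak limit in $L^2(D)^d$ has already been identified as $K_\star e_i$ through the auxiliary problem.
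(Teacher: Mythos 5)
The paper offers no proof of its own here: it simply cites Murat--Tartar (\textcite[pp.~37--42]{Murat1997}), and your proposal is precisely the classical argument that citation points to --- oscillating test functions $v^i_n$ solving an auxiliary problem with affine boundary data, identification of the limit through H-convergence, and an energy/div--curl argument for the corrector property. So in approach you are aligned with the paper's (delegated) proof.

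There is, however, one step that fails as literally written. You define
\begin{equation*}
  I_n := \int_D K_{s(n)} \bigl(\grad u_{s(n)} - (\Id+\grad w_{s(n)})\grad u_\star\bigr) \cdot \bigl(\grad u_{s(n)} - (\Id+\grad w_{s(n)})\grad u_\star\bigr)\, dx ,
\end{equation*}
but $(\Id+\grad w_{s(n)})\grad u_\star$ is the product of two functions that are merely in $L^2(D)$, hence it lies only in $L^1(D)^d$ in general; the quadratic form $I_n$ need not be finite, and a fortiori the claimed strong $L^2(D)^d$ convergence cannot hold. This is not a cosmetic issue --- it is the reason the theorem is stated in $L^1(D)^d$ rather than $L^2(D)^d$. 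The classical repair, which is the real technical content of the Murat--Tartar proof, is to approximate $\grad u_\star$ in $L^2(D)^d$ by piecewise-constant vector fields $\varphi$, run your energy/div--curl argument on each set where $\varphi$ is constant (there $I_n$ is well defined and the cross terms are genuine div--curl pairings), and then pass to $\grad u_\star$ by a triangle inequality using the Cauchy--Schwarz estimate $\norm{(\Id+\grad w_{s(n)})(\grad u_\star-\varphi)}_{L^1} \leqslant \norm{\Id+\grad w_{s(n)}}_{L^2}\norm{\grad u_\star-\varphi}_{L^2}$ together with the uniform $L^2$ bound on $\grad w_{s(n)}$. You should also be explicit that your construction gives $-\diver\mleft(K_{s(n)}(e_i+\grad w^i_{s(n)})\mright) = -\diver(K_\star e_i)$, which is \emph{not} zero for non-constant $K_\star$; the statement as given in the theorem requires either the alternative normalisation of the auxiliary problem or the localisation you allude to, and that deserves more than a parenthetical remark.
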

\begin{proof}
  See~\textcite[pp. 37--42]{Murat1997}.
\end{proof}

\subsection{Periodic homogenisation}
\label{sec:per-homog}

The corrector functions introduced above are used to build the homogenised operator, but more information on $K$ is required to give an expression of $K_\star$. The quasi-periodicity we are interested in arises most often as the result of random perturbation of a periodic medium, and both the periodic and stochastic context are well documented. Let us first consider the case where $K$ is a periodic operator.

\begin{proposition}[Periodic homogenisation]
  \label{prop:homog-per}
  
  Let $K\in \mathcal{M}(\alpha,\beta)$, with $0<\alpha<\beta$, be periodic of period $Y\subset\setR^d$. 
  Let $(\epsilon_n)_{n\in\setN}\in] 0,+\infty{} [^\setN$ be a sequence converging to \num{0}.
  Then the results from \cref{prop:fth,prop:corrector} hold   for   the sequence $(K(\frac{\cdot}{\epsilon_n}))_{n\in\setN}$ itself---not only on a subsequence. Furthermore,
  \begin{gather}
    \label{eq:homog-per}
    \forall(i,j)\in\Zint{1}{d}^2,\; (K_\star)_{ij} = \int_{Y} (e_i+\grad w_i)\cdot K e_j ,
  \end{gather}
  where the corrector $w_i\in H^1_\sharp(Y):=\{v\in H^1(Y) : v \text{ is } Y\text{-periodic}\}$ solves
  \begin{gather}
    \label{eq:corr-per}
    -\diver\mleft(K(e_i+\grad w_i)\mright) = 0 .
  \end{gather}
\end{proposition}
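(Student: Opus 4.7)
My plan is to solve the periodic cell problem \eqref{eq:corr-per} to produce explicit correctors, rescale them into oscillating test functions, and apply Tartar's method (via the div--curl lemma) to identify the H-limit. Because the resulting formula for $K_\star$ depends only on the cell problem and not on any extracted subsequence, compactness of H-convergence will upgrade the subsequential convergence from \cref{prop:fth} to convergence of the whole sequence.

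First I would solve \eqref{eq:corr-per} on $H^1_\sharp(Y)/\setR$: the bilinear form $(v,\varphi)\mapsto\int_Y K\grad v\cdot\grad\varphi$ is continuous and coercive on this quotient, thanks to the Poincaré--Wirtinger inequality and the uniform ellipticity of $K$, so Lax--Milgram yields a unique corrector $w_i$ for each $i\in\Zint{1}{d}$. I would then form the rescaled correctors $w_i^n(x):=\epsilon_n\,w_i(x/\epsilon_n)$ and fluxes $\xi_n^i(x):=K(x/\epsilon_n)(e_i+\grad w_i(x/\epsilon_n))$ on $D$. Periodicity gives $w_i^n\to 0$ strongly in $L^2(D)$ with $\grad w_i^n$ bounded in $L^2(D)$, hence $w_i^n\wconv{H^1(D)} 0$; the chain rule together with \eqref{eq:corr-per} yields $\diver\xi_n^i=0$ in $D$; and by the mean-value theorem for weak $L^2$-limits of periodic functions, $\xi_n^i\wconv{L^2(D;\setR^d)}\bar\xi^i$, a constant vector whose components reproduce \eqref{eq:homog-per}. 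Now for any subsequence $K_{s(n)}\Hconv\tilde K_\star$ supplied by \cref{prop:fth}, I would test $-\diver(K_n\grad u_n)=f$ against $w_i^n\varphi$ and $\diver\xi_n^i=0$ against $u_n\varphi$ for $\varphi\in\mathcal{D}(D)$, subtract, and integrate by parts. The div--curl lemma, applied to the weakly convergent pairs $(K_n\grad u_n,\grad w_i^n)$ and $(\xi_n^i,\grad u_n)$ whose divergences and curls are controlled, permits passage to the limit and delivers $\tilde K_\star\grad u_\star\cdot e_i=\bar\xi^i\cdot\grad u_\star$ in $\mathcal{D}'(D)$. This identifies $\tilde K_\star$ uniquely as the $K_\star$ of \eqref{eq:homog-per}, independently of the chosen subsequence, so the full sequence H-converges.

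The main obstacle is the rigorous application of compensated compactness: I must verify that $-\diver(K_n\grad u_n)=f$ lies in a compact subset of $H^{-1}_{\mathrm{loc}}(D)$ (trivially so, being constant in $n$) and that $\mathrm{curl}\,\grad w_i^n=0$, and I must dispose of the lower-order terms produced by $\grad\varphi$, which vanish in the limit thanks to the strong $L^2$-compactness of $w_i^n$ and $u_n$. Once H-convergence of the full sequence is established, the corrector theorem's conclusions follow by taking $w_i^n$ themselves as the correctors of \cref{prop:corrector}: $w_i^n\wconv{H^1(D)} 0$ has already been shown, and $-\diver(K_n(e_i+\grad w_i^n))=-\diver\xi_n^i=0$ converges trivially to $0$ in $H^{-1}(D)$.
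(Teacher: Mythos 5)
The paper does not actually prove this proposition; it only cites the classical proofs of Murat (compensated compactness), Tartar (oscillating test functions) and Nguetseng (two-scale convergence). Your argument is a reconstruction of the Murat--Tartar route, and its skeleton --- well-posedness of the cell problem by Lax--Milgram on $H^1_\sharp(Y)/\setR$, the rescaled correctors $w_i^n$ and divergence-free fluxes $\xi_n^i$, the div--curl lemma to identify the limit, and the subsequence-uniqueness argument to upgrade to the whole sequence --- is the right one. There is, however, one step that fails in the stated generality. The class $\mathcal{M}(\alpha,\beta)$ does not require $K$ to be symmetric (H-convergence is introduced in \cref{def:h-conv} precisely to cover non-symmetric operators), and your identification step silently uses symmetry: the two scalar sequences you pass to the limit are $K_n\grad u_n\cdot(e_i+\grad w_i^n)$, whose limit is $\tilde K_\star\grad u_\star\cdot e_i$ by div--curl on the first pair, and $\xi_n^i\cdot\grad u_n=K_n(e_i+\grad w_i^n)\cdot\grad u_n$, whose limit is $\bar\xi^i\cdot\grad u_\star$ by div--curl on the second pair. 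These two sequences coincide only when $K_na\cdot b=K_nb\cdot a$, i.e.\ when $K$ is symmetric; otherwise the subtraction leaves the uncontrolled cross term $(K_n-{}^tK_n)\grad u_n\cdot\grad w_i^n$. The classical repair is to build the oscillating test functions from the \emph{adjoint} cell problem $-\diver\mleft({}^tK(e_i+\grad w_i^*)\mright)=0$, so that ${}^tK_n\grad v_n^i$ is divergence-free and the single scalar $K_n\grad u_n\cdot\grad v_n^i$ can be evaluated two ways by div--curl; one then checks, using the weak forms of the two cell problems, that the resulting expression agrees with \cref{eq:homog-per}.

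Two smaller points. First, the identity $\tilde K_\star\grad u_\star\cdot e_i=\bar\xi^i\cdot\grad u_\star$ constrains $\tilde K_\star$ only in the direction of $\grad u_\star$ for the one right-hand side $f$ you fixed; to conclude that the matrices coincide you must vary $f$ (the H-limit does not depend on $f$) or localise with affine-plus-corrector solutions so that $\grad u_\star$ locally spans $\setR^d$. Second, verifying the first two properties of \cref{prop:corrector} for $w_i^n$ is easy, as you note, but the third conclusion \cref{eq:tc3} --- the strong $L^1$ convergence of $\grad u_n-(\Id+\grad w_n)\grad u_\star$ --- is the substantive content of the corrector theorem and is not addressed; once full-sequence H-convergence is established it is cleaner simply to invoke \cref{prop:corrector} for the (now convergent) whole sequence rather than to re-derive its conclusions.
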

\begin{proof}
  This result has been proven by different methods: first in~\citeyear{Murat1978a} through compensated compactness by~\textcite{Murat1978a}, then in~\citeyear{Tartar1979a} with oscillating test functions by~\textcite{Tartar1979a} and later in~\citeyear{Nguetseng1989a} via two-scale convergence by~\textcite{Nguetseng1989a}.
\end{proof}

The additional periodicity hypothesis in \cref{prop:homog-per} gives us expression~\cref{eq:homog-per} to compute the homogenised operator $K_\star$. The $d$ corrector problems~\cref{eq:corr-per} to be solved are limited to the period $Y$, hence the reasonable cost. Additionally, this homogenisation is independent of the source term $f$ and the boundary conditions, since they do not affect the homogenised operator, and are identical in both the original problem~\crefrange{eq:diff-strong}{eq:diff-bc} and the homogenised one~\cref{eq:diff-homog}. This justifies the concept of \enquote{homogenised material}.

\subsection{Stochastic homogenisation}
\label{sec:stoch-homog}

In the case where $K$ is stochastic, the general results from \cref{prop:fth,prop:corrector} hold almost surely. We introduce a probability space  $(\Omega,\mathcal{E},\mathbb{P})$.
Stochastic homogenisation generally relies on a stationarity assumption, \myie{} invariance of probability laws with respect to space (or time) translation. We expect $K$ to have a spatial quasi-periodic structure, therefore we choose a definition of stationarity suited to represent such structure.

\begin{definition}[Discrete stationarity]
  \label{def:stationarity}
  Let us assume that the group $(\setZ^d,+)$ acts on $(\Omega,\mathcal{E},\mathbb{P})$ through an ergodic measure-preserving transformation $\tau=(\tau_z)_{z\in\setZ^d}$ :
  \begin{gather*}
    \forall e \in\mathcal{E},\quad \mleft[\forall z\in\setZ^d, \tau_ze=e\mright] \rightarrow \mleft[\mathbb{P}(e)\in\{0,1\}\mright] .
  \end{gather*}
  Any   $\phi\in L^1_{loc}(\setR^d;L^1(\Omega))$
    is said to be (discretely) stationary if, and only if,
  \begin{gather*}
    \forall z\in\setZ^d, \phi(x+z,\omega) = \phi(x,\tau_z\omega) \text{ \myae{} on } \setR^d\times\Omega .
  \end{gather*}
\end{definition}
We can observe that the probability law of a function $\phi$, stationary in the sense of \cref{def:stationarity}, is invariant by any translation by $z\in\setZ^d$: since $\tau_z$ is preserves the measure, $\phi(x,\cdot)$ and $\phi(x+z,\cdot)$ follow the same probability law.
This is related to periodicity, \myeg{} any stationary function has a $Y$-periodic expectation, where $Y:=[0,1]^d$.
Up to a linear mapping $x\mapsto x/\varepsilon$, we can relate any $[0,\varepsilon]^d$-periodic function to \cref{def:stationarity}.
It should be noted that discrete stationarity is not a particular case of continuous stationarity.

Henceforth, stationarity will be meant in the discrete sense defined above and $K$ is assumed to be stationary.
This definition, albeit not used directly here, allows us to use some well-known results of stochastic homogenisation, such as the ergodic theorem formulated below in this discrete setting.

\begin{theorem}[Ergodic theorem]
  \label{prop:ergodic}
  Let   $K\in L^\infty(\setR^d;L^1(\Omega))$  be stationary. Then we have almost surely
  \begin{gather*}
    \frac{1}{(2N+1)^d}\sum_{\norm*{z}_\infty\leqslant N} K(x,\tau_z\omega) \xrightarrow[N\rightarrow\infty]{L^\infty(\setR^d)} \expec*{K(x,\cdot)} \\
    \text{and } K\mleft(\frac{x}{N},\omega\mright) \underset{N\rightarrow\infty}{\wconv{L^\infty(\setR^d)}} \expec*{\int_Y K} \quad \text{in weak-$\ast$ topology}    .
  \end{gather*}
\end{theorem}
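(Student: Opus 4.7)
The proof rests on the classical Birkhoff--Khinchin ergodic theorem for measure-preserving $\setZ^d$-actions: for any $g\in L^1(\Omega)$, the ergodicity of $\tau$ implies that the spatial averages $(2N+1)^{-d}\sum_{\norm{z}_\infty \leqslant N} g(\tau_z\omega)$ converge to $\expec{g}$ both almost surely and in $L^1(\Omega)$. I would invoke this as a black box and reduce both claims to it via the stationarity relation $K(x+z,\omega) = K(x,\tau_z\omega)$.

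For the first convergence, I would fix $x\in\setR^d$ and apply Birkhoff to the scalar random variable $g_x := K(x,\cdot)$, which lies in $L^1(\Omega)$ thanks to $K\in L^\infty(\setR^d;L^1(\Omega))$. This yields pointwise-in-$x$ almost-sure convergence of the stated spatial average to $\expec{K(x,\cdot)}$. To promote this into convergence in $L^\infty(\setR^d)$ on a single full-measure event, I would use a separability argument: choose a countable dense subset $D\subset\setR^d$, intersect the corresponding countably many full-measure sets, and then combine with the essential bound $\esssup_x \expec{\abs{K(x,\cdot)}}<\infty$ to extend the convergence to arbitrary $x$ and control the uniform norm.

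For the second convergence, I would test against an arbitrary $\varphi\in L^1(\setR^d)$ supported in a bounded set $A$. A change of variables together with a partition of the rescaled domain into unit cubes translated by $z\in\setZ^d$ lets me rewrite the dual pairing as a Riemann sum of the form $N^{-d}\sum_z \varphi(z/N)\int_Y K(y,\tau_z\omega)\,dy$, up to errors controlled by a smooth approximation of $\varphi$ and the essential bound on $K$. Applying the first part of the theorem to the stationary integrated coefficient $g(\omega) := \int_Y K(y,\omega)\,dy\in L^1(\Omega)$, whose integrability follows from Fubini and the uniform bound, gives almost-sure convergence of the random factor to $\expec{\int_Y K}$, while the deterministic part converges to $\int_A\varphi$ by classical Riemann-sum theory. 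A density argument then extends the weak-$\ast$ convergence from compactly supported to arbitrary $L^1$ test functions.

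The main obstacle in both parts is identical: upgrading a pointwise-in-$x$ (respectively pointwise-in-$\varphi$) almost-sure convergence---where the exceptional null set a priori depends on $x$ or on $\varphi$---into a single event of full probability on which the convergence is valid simultaneously over the relevant uncountable index set. This is dealt with by exploiting separability of the underlying function spaces and the uniform essential bound on $K$, and is really the only point at which the $L^\infty(\setR^d)$-level statements exceed the literal content of Birkhoff's theorem.
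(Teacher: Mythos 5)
The paper gives no argument of its own for this statement---its ``proof'' is a bare citation of Birkhoff (1931)---so your overall strategy of reducing both claims to the scalar Birkhoff--Khinchin theorem for ergodic measure-preserving $\setZ^d$-actions via the stationarity relation $K(x+z,\omega)=K(x,\tau_z\omega)$ is exactly the intended route. Your treatment of the second claim (decomposition into unit cells, application of the discrete ergodic theorem to $g(\omega):=\int_Y K(y,\omega)\,dy$, weighted Riemann sums, and a density argument in the test functions) is the standard argument and is sound.

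There is, however, a genuine gap in your first part: the separability step does not work as you describe it. Knowing that $(2N+1)^{-d}\sum_{\norm{z}_\infty\leqslant N}K(x_k,\tau_z\omega)\to\expec{K(x_k,\cdot)}$ for every $x_k$ in a countable dense subset of $\setR^d$ gives no information whatsoever at any other point $x$, because $x\mapsto K(x,\cdot)$ is merely an element of $L^\infty(\setR^d;L^1(\Omega))$: it carries no continuity in $x$, and two arbitrarily close points may carry completely unrelated random variables. The bound $\esssup_x\expec{\abs{K(x,\cdot)}}<\infty$ is a uniform bound, not a modulus of continuity, so it cannot bridge from the dense set to all of $\setR^d$, let alone control the essential supremum of the deviation over $x$, which is precisely what convergence in $L^\infty(\setR^d)$ demands. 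Note also that writing $x=y+k$ with $y\in Y$ and $k\in\setZ^d$, stationarity turns the average at $x$ into an average of $K(y,\tau_{z'}\omega)$ over the \emph{shifted} window $z'\in k+[-N,N]^d$, so the $L^\infty(\setR^d)$ statement additionally requires uniformity over all integer translates of the averaging window---a second point your sketch does not address. The standard repair is to exploit separability in the function space over the cell rather than in the spatial variable: reduce to $y\in Y$ and apply a vector-valued (Banach-space) pointwise ergodic theorem to the single random element $\omega\mapsto K(\cdot,\omega)|_Y$, for which the exceptional null set is constructed once and for all, rather than attempting to glue together uncountably many scalar applications of Birkhoff indexed by $x$.
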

\begin{proof}
  See~\textcite{Birkhoff1931}.
\end{proof}

\begin{proposition}[Stochastic homogenisation]
  \label{prop:homog-sto}
  Let $K\in L^\infty(\setR^d;L^1(\Omega))$ be stationary and such that $K(\cdot,\omega)\in \mathcal{M}(\alpha,\beta)$, with $0<\alpha<\beta$, almost surely.
  Let $(\epsilon_n)_{n\in\setN}\in]0,+\infty[^\setN$ be a sequence converging to \num{0}.
  Then the results from \cref{prop:fth,prop:corrector} hold   for
    a subsequence of $(K(\frac{\cdot}{\epsilon_n},\cdot))_{n\in\setN}$. Furthermore,
  \begin{gather*}
    \forall (i,j)\in\Zint{1}{d}^2,\; (K_\star)_{ij} = \expec*{\int_Y (e_i+\grad w_i)\cdot K e_j},
  \end{gather*}
  where%
  \footnote{We let $\mathcal{W} := \mleft\{ v \in L^2_{loc}(\setR^d;L^2(\Omega)) : \grad v \in L^2_{unif}(\setR^d;L^2(\Omega))  \mright\}$ 
    with $L^2_{unif}(\setR^d;L^2(\Omega)) := \bigl\{v \in L^2(\setR^d;L^2(\Omega)) :
    \exists C \in\setRsp, \forall x\in\setR^d\times\setRsp, \norm{v}_{L^2(\mathcal{B}(x,1);L^2(\Omega))}\leqslant C \bigr\}$, 
    and $\mathcal{B}(x,R)$ the ball of centre $x$ and radius $R$.}
   every corrector $w_i\in\mathcal{W}$ is solution to
  \begin{gather}
    \label{eq:corr-sto}
    \begin{dcases}
      -\diver\mleft(K(e_i+\grad w_i)\mright) & = 0 \text{ \myas{} on } \setR^d \\
      \expec*{\int_Y \grad w_i}             & = 0                          \\
      \grad w_i \text{ stationary}         & 
    \end{dcases}.
  \end{gather}
\end{proposition}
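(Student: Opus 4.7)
The strategy is a stochastic adaptation of Tartar's method of oscillating test functions: first construct stationary correctors on the probability space, then use the ergodic theorem \cref{prop:ergodic} to identify the weak limits, and finally invoke \cref{prop:fth,prop:corrector} to conclude.

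The most delicate step is the construction of $w_i\in\mathcal{W}$ solving~\cref{eq:corr-sto}. Following Kozlov and Papanicolaou--Varadhan, I would work on the probability space and introduce the Hilbert space $L^2_{\mathrm{pot}}(\Omega;\setR^d)$ of \enquote{potential fields}, defined as the closure in $L^2(\Omega;\setR^d)$ of gradients of smooth, stationary, zero-mean scalar fields. The bilinear form $(\xi,\eta)\mapsto\expec*{\int_Y K\xi\cdot\eta}$ is coercive on this space by the ellipticity in $\mathcal{M}(\alpha,\beta)$, so Lax--Milgram yields a unique $\xi_i\in L^2_{\mathrm{pot}}(\Omega;\setR^d)$ such that $\expec*{\int_Y K(e_i+\xi_i)\cdot\eta}=0$ for every $\eta\in L^2_{\mathrm{pot}}(\Omega;\setR^d)$. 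A scalar $w_i\in\mathcal{W}$ with stationary, zero-mean gradient equal to $\xi_i$ is then recovered on $\setR^d$ by line integration, which is consistent because $\xi_i$, as a limit of gradients, has vanishing distributional curl and because the zero-mean property rules out a linear drift. The first equation of~\cref{eq:corr-sto} then holds almost surely by density and the defining orthogonality relation.

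Once $w_i$ is available, set $W^i_n(x,\omega):=\epsilon_n w_i(x/\epsilon_n,\omega)$. The ergodic theorem \cref{prop:ergodic} applied to $\grad w_i$ and to the flux $K(e_i+\grad w_i)$ yields, almost surely,
\begin{gather*}
  \grad W^i_n \wconv{L^2(D;\setR^d)} 0 , \\
  K\mleft(\tfrac{\cdot}{\epsilon_n},\omega\mright)(e_i+\grad W^i_n) \wconv{L^2(D;\setR^d)} \expec*{\int_Y K(e_i+\grad w_i)} ,
\end{gather*}
while the divergence of the flux vanishes identically by construction. Thus the candidate homogenised operator is the deterministic matrix $K_\star$ whose columns are $\expec*{\int_Y K(e_i+\grad w_i)}$; it belongs to $\mathcal{M}(\alpha,\beta^2\alpha^{-1})$ by the bound of \cref{prop:fth}.

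To close the argument, I would apply \cref{prop:fth,prop:corrector} pointwise in $\omega$ to extract a subsequence along which H-convergence and the corrector convergences hold, then use $W^i_n$ as oscillating test functions in a div--curl argument to identify the H-limit as the deterministic $K_\star$ constructed above, independently of the extracted subsequence. The hard part is the construction step: one must verify sub-linear growth of $w_i$ at infinity to ensure $w_i\in L^2_{loc}(\setR^d;L^2(\Omega))$---not merely $\grad w_i\in L^2_{unif}$---which relies on the zero-mean condition on $\grad w_i$ and a further application of the ergodic theorem to $\xi_i$.
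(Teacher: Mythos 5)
The paper does not prove this proposition itself; it simply cites Theorem~7.9 of Jikov, Kozlov and Oleinik, whose proof follows essentially the route you outline: a Weyl-type potential/solenoidal decomposition and Lax--Milgram on the probability space to construct the corrector gradient, Birkhoff's ergodic theorem (\cref{prop:ergodic}) to pass to the macroscopic limit, and a div--curl (oscillating test function) argument to identify the H-limit. Your sketch is therefore consistent with the paper's (cited) proof; the only point to watch is that the construction must be carried out in the discrete-stationarity setting of \cref{def:stationarity}, so the potential space is the closure taken in $L^2(Y\times\Omega;\setR^d)$ with the inner product $\expec*{\int_Y \xi\cdot\eta}$ rather than in $L^2(\Omega;\setR^d)$ alone.
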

\begin{proof}
  See~\textcite[th. 7.9, p. 244]{Jikov1994}.
\end{proof}

Unlike the periodic corrector problems~\cref{eq:corr-per}, the stochastic corrector problems~\cref{eq:corr-sto} are impractical to solve because of the global conditions over both $\setR^d$ and $\Omega$. This is why practical computations of $K_\star$ usually introduce the \enquote{apparent homogenised operator}
\begin{gather}
  \label{eq:app-homog-sto}
  (K_\star^N)_{ij} := \int_{Y_N} (e_i+\grad w_{N,i})\cdot K\cdot e_j
\end{gather}
on a truncated domain $Y_N = [0,N]^d$, where the \enquote{apparent correctors} are solutions to
\begin{gather}
  \label{eq:app-corr-sto}
  \begin{dcases}
    -\diver(K(e_i+\grad w_{N,i})) & = 0 \quad \text{\myas{} on } Y_N \\
    w_{N,i} ~Y_N\text{-periodic} &
  \end{dcases}
\end{gather}
Since $K$ is stationary, \cref{prop:ergodic} yields~\parencite[see][th. 1 p. 158]{Bourgeat2004}
\begin{gather}
  \expec*{K_\star^N} = \lim_{N\rightarrow\infty} K_\star^N = K_\star,
\end{gather}
therefore we can approximate $K_\star$ with a Monte Carlo estimator
\begin{gather}
  \label{eq:mc-estimator}
  \theta_m(K_\star^N) := \frac{1}{m}\sum_{i=1}^{m}K_\star^N(\cdot,\omega_i)
\end{gather}
of $\expec{K_\star^N}$, using $m$ samples (independent and identically distributed) $K(\cdot,\omega_i), 1\leqslant i\leqslant m$. The chain of approximation thus reads $K_\star \approx \expec{K^N_\star} \approx \theta_m(K^N_\star)$. An evaluation of this estimator requires $d\times m$ resolutions of the apparent corrector problem~\cref{eq:app-corr-sto}, which remains considerably more expensive than periodic homogenisation. Furthermore,
\begin{gather}
  \label{eq:mc-conv-rate}
  \norm*{\expec*{K^N_\star}-\theta_m(K^N_\star)}_{L^2(\Omega)} = \sqrt{\frac{\var*{K^N_\star}}{m}},
\end{gather}
so the rate of convergence of the estimator is rather slow; hence the benefit of a method to exploit quasi-periodicity in order to break down the complexity of these corrector problems.

\subsection{Homogenisation with random mapping}
\label{sec:homog-map}

Alternatively to the previous setting, one may consider the problem~\cref{eq:diff-strong} to be set on a domain transformed by a random stationary diffeomorphism $\phi(\omega):\setR^d\rightarrow\setR^d$ such that $K\circ\phi=:K_\sharp$ is $Y$-periodic. We can then write~\cref{eq:diff-strong} as
\begin{gather}
  \label{eq:diff-sto-map}
  -\diver \mleft( K_\sharp\circ\phi^{-1}(\omega) \grad u \mright) = f.
\end{gather}
This domain transformation is illustrated on \cref{fig:mapping}.

\begin{figure}[hbp]
  \centering
  \includegraphics[width=.8\linewidth]{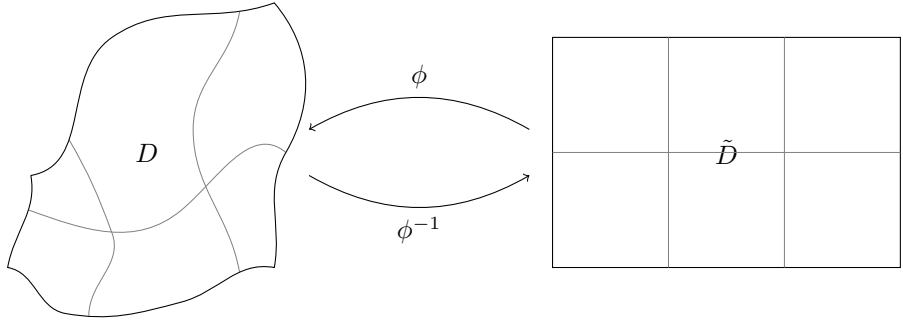}
  \caption{Random mapping}
  \label{fig:mapping}
\end{figure}

\begin{definition}[Random stationary diffeomorphism]
  \label{defi:random-stat-diff}
  For $0<\mu\leqslant\nu<+\infty$, let us note $\Phi(\mu,\nu)$ the set of functions   $\varphi\in L^1(\Omega;C^1(\setR^d,\setR^d))$
    such that, $\forall\omega\in\Omega,\; \varphi(\omega)$ is \myas{} a diffeomorphism, $\grad\varphi:\Omega\to C^0(\setR^d;\setR^{d\times d})$ (such that $\grad\varphi(\omega)$ is the gradient of $\varphi(\omega)$) is stationary,
  \begin{gather*}
    \essinf_{(x,\omega)\in\setR^d\times\Omega} \deter*{\grad\varphi(\omega)(x)} \geqslant \mu, \qquad\text{ and }\qquad
    \esssup_{(x,\omega)\in\setR^d\times\Omega} \norm*{\grad\varphi(\omega)(x)} \leqslant \nu.
  \end{gather*}
\end{definition}

\begin{proposition}[Homogenisation with random mapping]
  \label{prop:homog-map}
  Let $K_\sharp\in \mathcal{M}(\alpha,\beta)$, with $0<\alpha<\beta$, be periodic of period $Y\subset \setR^d$; let $\phi\in\Phi(\mu,\nu)$, with   $0<\mu\leqslant\nu<+\infty$; let $(\epsilon_n)_{n\in\setN}\in]0,+\infty[^\setN$ be a sequence converging to \num{0}. For any $n\in\setN$, we note $u_n$ the solution in $H^1(D)$ to
  \begin{gather*}
    \mleft\{
      \begin{aligned}
        -\diver \mleft( K_\sharp\circ\phi(\omega)^{-1}\mleft(\frac{\cdot}{\epsilon_n}\mright) \grad u_n \mright) & = f \\
        u_{n|\partial D} & = 0
      \end{aligned}
    \mright..
  \end{gather*}

  Then $u_n\wconv{H^1(D)}u_\star$ and $u_n\xrightarrow{L^2(D)}u_\star$, \myas{}, where $u_\star$ is solution of the homogenised problem
  \begin{gather*}
    -\diver \mleft( K_\star \grad u_\star \mright) = f.
  \end{gather*}
  The homogenised operator's coefficients are defined as
  \begin{gather*}
    (K_\star)_{ij} = \deter*{\expec*{\int_Y \grad\phi(\cdot)}}^{-1}
    \expec*{
      \int_{\phi(\cdot)(Y)} (e_i+\grad w_i)\cdot (K_\sharp\circ\phi(\cdot)^{-1}) e_j
    }
  \end{gather*}
  for all $(i,j)\in\Zint{1}{d}^2$. Each corrector $w_i$ is the solution of
  \begin{gather}
    \label{eq:corr-map}
    \begin{dcases}
      -\diver \mleft( K_\sharp\circ\phi(\cdot)^{-1} (e_i+\grad w_i) \mright) = 0 \\
      \expec*{\int_{\phi(\cdot)(Y)} \grad w_i} = 0 \\
      \grad \tilde{w_i} \text{ is stationary}\quad (\tilde{w}_i\circ\phi(\cdot)^{-1} := w_i)
    \end{dcases}
  \end{gather}
  in   $H^1(Y;L^2(\Omega))$, unique up to the addition of a random constant.
\end{proposition}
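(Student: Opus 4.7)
The plan is to reduce the problem to the standard stochastic setting of \cref{prop:homog-sto} via a change of variables that sends the physical coordinates back to the reference coordinates in which the material is periodic, and then translate all conclusions back to the physical coordinates using the inverse change of variables together with the ergodic behaviour of $\phi$ at large scales.

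First, I would introduce the random pullback $\tilde{u}_n(y,\omega) := u_n\bigl(\epsilon_n\phi(\omega)(y/\epsilon_n)\bigr)$ and perform the change of variables $x = \epsilon_n\phi(\omega)(y/\epsilon_n)$ in the weak formulation of the equation for $u_n$. A direct computation yields
\begin{gather*}
  -\diver_y\bigl(\tilde{A}(y/\epsilon_n,\omega)\,\grad_y\tilde{u}_n\bigr) = \tilde{f}_n
\end{gather*}
on the transformed domain $\Phi_n(\omega)^{-1}(D)$, where $\Phi_n(\omega)(y):=\epsilon_n\phi(\omega)(y/\epsilon_n)$ and
\begin{gather*}
  \tilde{A}(z,\omega) := \deter*{\grad\phi(\omega)(z)}\,\grad\phi(\omega)(z)^{-1}K_\sharp(z)\,\grad\phi(\omega)(z)^{-T}.
\end{gather*}
Since $K_\sharp$ is $Y$-periodic (hence stationary) and $\grad\phi$ is stationary, the field $\tilde A$ is stationary in the sense of \cref{def:stationarity}. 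The bounds of \cref{defi:random-stat-diff} together with $K_\sharp\in\mathcal M(\alpha,\beta)$ yield $\tilde A\in\mathcal M(\tilde\alpha,\tilde\beta)$ almost surely, with constants depending only on $\alpha,\beta,\mu,\nu$.

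Then I would apply \cref{prop:homog-sto} to $\tilde A$, which produces a deterministic operator $\tilde K_\star$, stationary correctors $\tilde w_i\in\mathcal W$ solving $-\diver(\tilde A(e_i+\grad\tilde w_i))=0$ almost surely on $\setR^d$ with $\expec*{\int_Y \grad\tilde w_i}=0$, and a homogenised limit $\tilde u_\star$ of $\tilde u_n$. Setting $w_i:=\tilde w_i\circ\phi(\cdot)^{-1}$, a change of variables $z=\phi(\cdot)^{-1}(x)$ in the corrector equation for $\tilde w_i$ turns it into \cref{eq:corr-map} with coefficient exactly $K_\sharp\circ\phi(\cdot)^{-1}$; the stationarity of $\grad\tilde w_i$ required in \cref{eq:corr-map} is inherited verbatim from \cref{prop:homog-sto}. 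The formula for $(K_\star)_{ij}$ is obtained by rewriting $(\tilde K_\star)_{ij}=\expec*{\int_Y (e_i+\grad\tilde w_i)\cdot\tilde A\,e_j}$ via the same change of variables and then relating $\tilde K_\star$ to $K_\star$ through the limit map $M:=\expec*{\int_Y\grad\phi}$; the prefactor $\deter*{M}^{-1}$ arises because one normalises the integral per unit reference cell $Y$ rather than per unit physical cell $\phi(\cdot)(Y)$, whose expected volume is $\deter{M}=\expec*{\int_Y\deter{\grad\phi}}$ by the ergodic theorem. The convergences $u_n\wconv{H^1(D)}u_\star$ and $u_n\xrightarrow{L^2(D)}u_\star$ almost surely then follow by transferring the corresponding convergences of $\tilde u_n$ through $\Phi_n$, which by \cref{prop:ergodic} converges almost surely and uniformly on bounded sets to the deterministic linear map $y\mapsto My$.

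The main obstacle I expect is this identification step: reconciling the zero-mean condition $\expec*{\int_Y\grad\tilde w_i}=0$ provided by \cref{prop:homog-sto} with the condition $\expec*{\int_{\phi(\cdot)(Y)}\grad w_i}=0$ demanded in \cref{eq:corr-map}, since the two are not interchanged by a mere change of variables but require the stationarity of $\grad\tilde w_i$ and possibly an adjustment by an additive random constant (precisely the uniqueness freedom granted by the proposition). A secondary subtlety lies in transferring the convergences to the fixed domain $D$: $\Phi_n$ is not close to the identity but only to a deterministic affine map, so one works almost surely on the random domain $\Phi_n(\omega)^{-1}(D)$ and uses the uniform convergence on compacts given by the ergodic theorem to conclude on $D$.
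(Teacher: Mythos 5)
The paper does not actually prove \cref{prop:homog-map}: it defers entirely to Blanc--Le Bris--Lions. Your reduction to the stationary setting by the change of variables $x=\epsilon_n\phi(\omega)(y/\epsilon_n)$ is the right core idea, and your transformed coefficient $\tilde A=\deter{\grad\phi}\,(\grad\phi)^{-1}K_\sharp\,\tensor[^t]{(\grad\phi)}{^{-1}}$ is exactly the operator $\mapK$ the paper later uses for the apparent correctors. However, as written the argument has a step that fails.

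The identification $w_i=\tilde w_i\circ\phi(\cdot)^{-1}$ is wrong. Pushing the stationary corrector equation $-\diver(\tilde A(e_i+\grad\tilde w_i))=0$ forward through $\phi$ gives $-\diver\bigl(K_\sharp\circ\phi^{-1}\,(\tensor[^t]{(\grad\phi)}{^{-1}}e_i\circ\phi^{-1}+\grad w_i)\bigr)=0$: the constant direction $e_i$ becomes the non-constant field $\tensor[^t]{(\grad\phi)}{^{-1}}e_i$, so you do not land on \cref{eq:corr-map}. Equivalently, the pullback of \cref{eq:corr-map} to the reference coordinates is $-\diver\bigl(\tilde A(\tensor[^t]{(\grad\phi)}{}e_i+\grad\check w_i)\bigr)=0$ --- precisely the form of the paper's truncated problem \cref{eq:app-corr-map}, whose source direction is $\tensor[^t]{(\grad\phi)}{}e_i$, not $e_i$. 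The repair is to write $\tensor[^t]{(\grad\phi)}{}e_i=\tensor[^t]{M}{}e_i+\grad\bigl((\phi-M\cdot)\cdot e_i\bigr)$ with $M:=\expec*{\int_Y\grad\phi}$, absorb the zero-mean stationary gradient into the unknown, and use linearity of the corrector in its direction; the corrector of \cref{eq:corr-map} is then $\tilde w_{\tensor[^t]{M}{}e_i}-(\phi-M\cdot)\cdot e_i$ up to a random constant, not $\tilde w_{e_i}$. This affine adjustment is exactly where $M$ enters, and without it neither the prefactor $\deter*{\expec*{\int_Y\grad\phi}}^{-1}$ nor the mean-zero normalisation $\expec*{\int_{\phi(\cdot)(Y)}\grad w_i}=0$ can be recovered. (Relatedly, your identity $\deter{M}=\expec*{\int_Y\deter{\grad\phi}}$ is true but is a null-Lagrangian fact, not a consequence of the ergodic theorem alone.)

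A second, unresolved issue is that \cref{prop:homog-sto} is stated for a fixed domain, whereas your transformed boundary-value problem is posed on $\Phi_n(\omega)^{-1}(D)$, which varies with $n$ and $\omega$; you cannot invoke that proposition as a black box, and you dismiss this as a secondary subtlety without a mechanism to handle it. The proof in the cited reference sidesteps both difficulties by using the change of variables only to \emph{construct} the correctors on all of $\setR^d$ (where there is no boundary) and then running the oscillating-test-function / div--curl argument directly in the physical variables on the fixed domain $D$.
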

\begin{proof}
  See~\textcite[§ 1.3, pp. 719--722]{Blanc2006}.
\end{proof}

\begin{remark}[Extension of \cref*{prop:homog-map}]
  \label{rmk:homog-map-stat}
  Different assumptions on the operator in~\cref{eq:diff-sto-map} may be considered. Results similar to \cref{prop:homog-map} have been proven by~\textcite[pp. 11--13]{Blanc2007}, with assumption of ergodicity and stationarity, either continuous or discrete in the sense of \cref{def:stationarity}. Let us recall that the periodic setting considered here is a particular case of the discrete stationary setting, but \emph{not} of the continuous one.
\end{remark}

We can see that problem~\cref{eq:corr-map} presents the same issues as problem~\cref{eq:corr-sto} where practical computations are concerned. The previous method based on apparent correctors computed on truncated domains~\parencite[detailed in][]{Bourgeat2004} has been adapted to this setting by \textcite{Costaouec2010a}. We introduce the apparent homogenised operator 
\begin{multline*}
  (K_\star^N(\omega))_{ij} := \deter*{\frac{1}{|Y_N|} \int_{Y_N} \grad \phi(\omega)}^{-1} \\
  \times  \frac{1}{|Y_N|}\int_{Y_N} e_iK_\sharp\cdot(e_j+ \tensor[^t]{(\grad\phi(\omega))}{^{-1}} \grad\tilde{w}_j^N(\omega)) \deter*{\grad\phi(\omega)},
\end{multline*}
computed from apparent correctors $\tilde{w}_i^N\in H^1_\sharp(Y_N)$. These are the solutions, unique up to an additive random constant, to
\begin{align}
  \int_{Y_N} \grad\tilde{v} \grad\phi^{-1}\cdot K (e_i+ \tensor[^t]{\grad\phi}{^{-1}}\grad\tilde{w}_i^N) \deter*{\grad\phi} &  = 0, \forall\tilde{v}\in H^1_\sharp(Y_N) , \notag\\
  \label{eq:app-corr-map}
  \text{which is equivalent to}\quad \int_{Y_N} \grad\tilde{v} \cdot\mapK (\tensor[^t]{\grad\phi}{}e_i+ \grad\tilde{w}_i^N) &  = 0, \forall\tilde{v}\in H^1_\sharp(Y_N) , 
\end{align}
with $\mapK := \deter{\grad\phi} \tensor*[]{\grad\phi}{^{-1}} K_\sharp \tensor[^t]{\grad\phi}{^{-1}}$.

It has been proven by~\textcite{Legoll2014b} that $K^N_\star \xrightarrow[N\rightarrow\infty]{\text{\myas{}}} K_\star$ so we can use a Monte Carlo method to estimate $\expec{K^N_\star}$ and therefore to get an approximation of
$K_\star$, as in the previous setting.

\section{Tensor-based two-scale method}
\label{sec:tens-struct-mult}

\subsection{Description of the method}
\label{sec:lrm}

We will briefly outline a method to get a cost-efficient approximation to the solution of stationary heterogeneous diffusion problems such as~\cref{eq:app-corr-sto}. This method has been introduced by~\textcite{Ayoul-Guilmard2018a}.

We consider a multiscale representation which separates microscopic and mesoscopic information, the mesoscopic scale being that of the periods. The domain $D$ is partitioned into cells $D_i$, each being a period of the reference periodic medium. Letting $I\subset\setN$ be the cell index set and $Y\subset\setR^d$ a reference period, $I\times Y$ is identified with $\bigcup_{i\in I}D_i$ (see \cref{fig:domain-separation}) through the isomorphism $\zeta:I\times Y \ni (i,y) \mapsto y+b_i \in D$, with $b_i$ such that $Y+b_i=D_i$ for any cell $D_i, i\in I$.

\begin{figure}
  \centering
  \includegraphics[height=\pictureDefaultHeight]{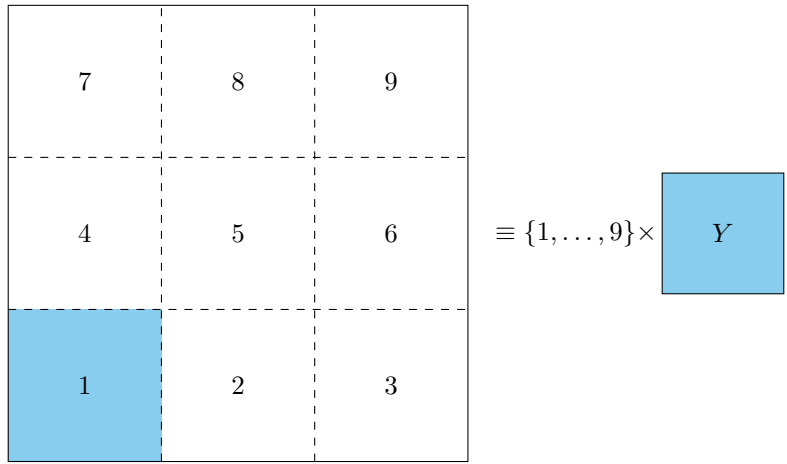}
  \caption{$D = \bigcup_{i\in I}D_i \equiv I \times Y$}
  \label{fig:domain-separation}
\end{figure}

Likewise, $\setR^D$ is isomorphic to $\setR^I\otimes\setR^Y$ with isomorphism $\Upsilon$ such that, for any $v\in\setR^D$, we identify $v$ with
\begin{gather}
  \label{eq:tensor-format}
  \Upsilon(v) = \sum_{n=1}^{\#I} e_n^I \otimes  \mleft(v\circ\zeta(n,\cdot)\mright) \quad
  \text{\myie{}, } \forall(i,y)\in I\times Y, \; v(\zeta(i,y)) = \sum_{n=1}^{\#I} e_n^I(i)v(\zeta(n,y)),
\end{gather}
where $\{e_n^I\}_{n\in I}$ is the canonical orthonormal basis of $\setR^I$.
Henceforth we will define the rank of $v$ as the rank of $\Upsilon(v)$, which is the minimal $r$ such that $\Upsilon(v)=\sum_{n=1}^r v^I_n \otimes v^Y_n$.
More specifically, $K$ and $f$ can be identified with functions of $\setR^I\otimes L^\infty(Y)$ and $\setR^I\otimes L^2(Y)$, respectively, and the broken Sobolev space $H^1(\bigcup_{i\in I} D_i)= \mleft\{v \in L^2(\mathbb{R}) : \forall i\in I, v_{|D_i}\in H^1(D_i) \mright\}$ is identified with $\setR^I\otimes H^1(Y)$.

Consequently, problem~\cref{eq:app-corr-sto} can be formulated over a space $V(D)\subset\setR^I\otimes H^1(Y)$ within a discontinuous Galerkin setting \footcite[with a SWIP method detailed in][]{DiPietro2011}.
This  function space is defined as $V(D):=\setR^I\otimes V(Y)$ with $V(Y) := \mleft\{v\in H^1(Y) : (\nabla v)_{|\partial Y}\in L^2(\partial Y)^d \mright\}$.
We then introduce a finite-dimensional subspace $V_h(D):=\setR^I\otimes V_h(Y)$ with $V_h(Y)\subset V(Y)$ and look for an approximation of the corrector functions $w_{N,k}$ (with $N:=\#I^{1/d}$) as solutions to
\begin{gather}
  \label{eq:tensor-corr}
  a^{swip}_h(w_{N,k},\delta w) = b_{h,k}(\delta w), \; \forall \delta w \in V_h(D).
\end{gather}

The benefit of format~\cref{eq:tensor-format} lies in the match between the structure of the tensor product space and the underlying periodic organisation. The representation in $V(D)$ of a quasi-periodic function has as low a rank $r$ as it is close to periodic. Note for example that a periodic function $v$ is identified with a rank one tensor $\Upsilon(v)=1^I\circ\varpi$, with $1^I:I\to\{1\}$ and $\varpi=v\circ\zeta(1,\cdot)$.
 The quasi-periodicity assumption on $K$ is therefore interpreted as a low-rank assumption, hence our expectation that the solution of~\cref{eq:tensor-corr} would have a low rank. We exploit this by building a low-rank approximation of the solution of tensor-structured corrector problem~\cref{eq:tensor-corr} using a greedy algorithm~\parencite[see][§ 3.2]{Ayoul-Guilmard2018a}). The residual error is controlled with a tolerance $\varepsilon$ so that
\begin{gather}
  \label{eq:residual-error-crit}
  \frac{\norm*{a^{swip}_h(w_{N,k},\cdot) - b_{h,k}}_{V_h(D)'}}{\norm*{b_{h,k}}_{V_h(D)'}} \leqslant \varepsilon.
\end{gather}

\begin{remark}[Random mapping]
  We assumed so far that the quasi-periodic structure of the medium (\myie{} $K$) matches a perfectly regular mesoscopic mesh. The case where such mesh would be irregular, even random, can be addressed easily if we know a random stationary diffeomorphism $\phi$ such that $K\circ\phi=:K_\sharp$ is
  periodic; then, the results from \cref{sec:homog-map} apply. For this low-rank method to remain efficient, $\mapK$ is assumed to have a low rank in $\setR^I\otimes L^\infty(Y)$, which depends on the ranks of $\grad\phi^{-1}$ and $\deter{\phi}$ (since $\mathrm{rank}(K_\sharp)=1$). \Cref{rmk:homog-map-stat} mentions that similar results can be proven for $K\circ\phi$ stationary, in which case the rank of $\mapK$ is also influenced by the rank of $K\circ\phi$.
\end{remark}

\subsubsection*{Modes recycling}

An additional benefit arises from the tensor structure of $V(D)$ for successive simulations.
One interpretation of format~\cref{eq:tensor-format} is that
\begin{gather*}
  \forall i\in I,\quad v(i,\cdot)\in\vspan\mleft(v^Y_n\mright)_{n\in\Zint{1}{r}},
\end{gather*}
so  any function may be considered cell-wise as a superposition of the same set of microscopic patterns $v^Y_n$ that we will call \enquote{modes}.
This concept is illustrated on \cref{fig:modes-example}: the nine modes in \cref{fig:modes-example_library} compose the function pictured on \cref{fig:modes-example_approx}, where white lines delineate the hundred cells.
Whenever the solutions to successive problems can be expected to evince similar modes, one may recycle those computed from one resolution to the next.

Such situation is common for random quasi-periodic materials, \myeg{} with sampling-based methods such as Monte Carlo estimation.
Indeed, this strategy bears particular relevance if the differences between samples are consistent with the mesoscopic structure, which can be expected from our view of quasi-periodicity as a randomly perturbed periodicity. 
Since the perturbation is such that the rank of $K$ remains low, recurring patterns are expected across realisations of $K$.
Should those realisations differ mostly in their spatial distribution, then the conductivity's microscopic information remains unchanged and---expectedly---so does the solution's.
Particularly, we will present in \cref{sec:lrm-tests} examples of problem~\cref{eq:app-corr-sto} typical of such cases where only the functions $K_n^I\in\setR^I$ in the representation $K\equiv \sum_n K^I_n \otimes K^Y_n $ are random%
\footnote{In this specific case, operators over $V_h(Y)$ could be recycled as well, thus saving the associated assembling cost; since we observed this cost to be insignificant, we did not do so.}.
Furthermore, this modes library is expected to reach a practical completion as a consequence of $K$'s stationarity. \myIe{} the library will eventually become comprehensive enough to  accurately represent the solution associated to almost any realisation of $K$, within the chosen tolerance.

The most straightforward implementation of this recycling strategy would be to set an anterior result $u_0:=\sum_{k=1}^{r_0} u_k^I\otimes u_k^Y$ as initial point for the subsequent resolution.
Although the greedy algorithm would usually see its performance significantly improved by an initialisation close to the solution, it would always compute at least one additional term $u_{r_0+1}^I\otimes u_{r_0+1}^Y$, even if the two successive problems were identical.
The resulting ever-growing library would yield worse performance than the original method.
Therefore, we first look for a suitable approximation on the subspace spanned by the modes computed so far: we compute the Galerkin projection of $u_0$ on $\setR^I\otimes\vspan\{u_1^Y,\ldots,u_{r_0}^Y\}$ then check the resulting approximation against criterion~\cref{eq:residual-error-crit}.
If it is not satisfied, then said projection is set as the greedy algorithm's initial point.
Once the library has reached its aforementioned practical completion, subsequent resolutions are reduced to this initial projection step, \myie{} a Galerkin projection onto a subspace of $V_h(D)$ of which a basis is known and hopefully low-dimensional, thereby further reducing complexity.

This modes recycling strategy is akin to an adaptive reduced basis approach, and it should be noted that this basis construction strategy is non-optimal in the sense that the low-rank approximation is not optimal: the same precision might be achieved with a lower rank and yield a library of fewer modes.
Although this non-optimal greedy algorithm is usually more cost-efficient as far as a single computation is concerned, this strategy has not been specifically designed toward library construction.

\begin{figure}[hbt]
  \centering
  \begin{subfigure}{0.48\linewidth}
    \centering
    \begin{tikzpicture}
      \node[anchor=south west,inner sep=0] (image) at (0,0) {\includegraphics[width=\linewidth,height=\linewidth]{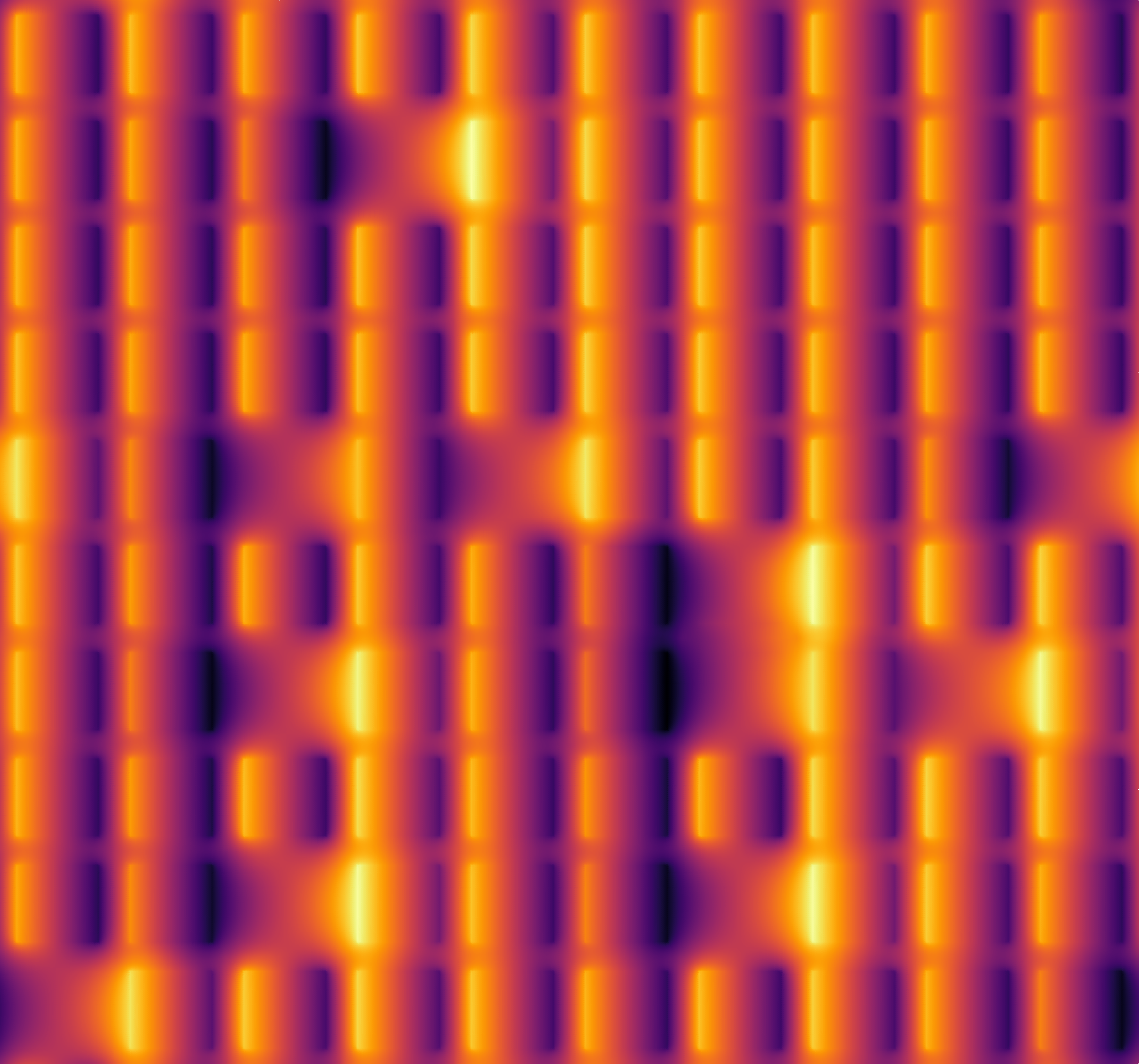}};
      \begin{scope}[x={(image.south east)},y={(image.north west)}]
        \draw [white,opacity=.5,thin] (0,0) grid[step=.1] (1,1);
      \end{scope}
    \end{tikzpicture}
    \caption{Approximation $v_9$}
    \label{fig:modes-example_approx}
  \end{subfigure}
  ~
  \begin{subfigure}{0.48\linewidth}
    \centering
    \includegraphics[width=\linewidth,height=\linewidth]{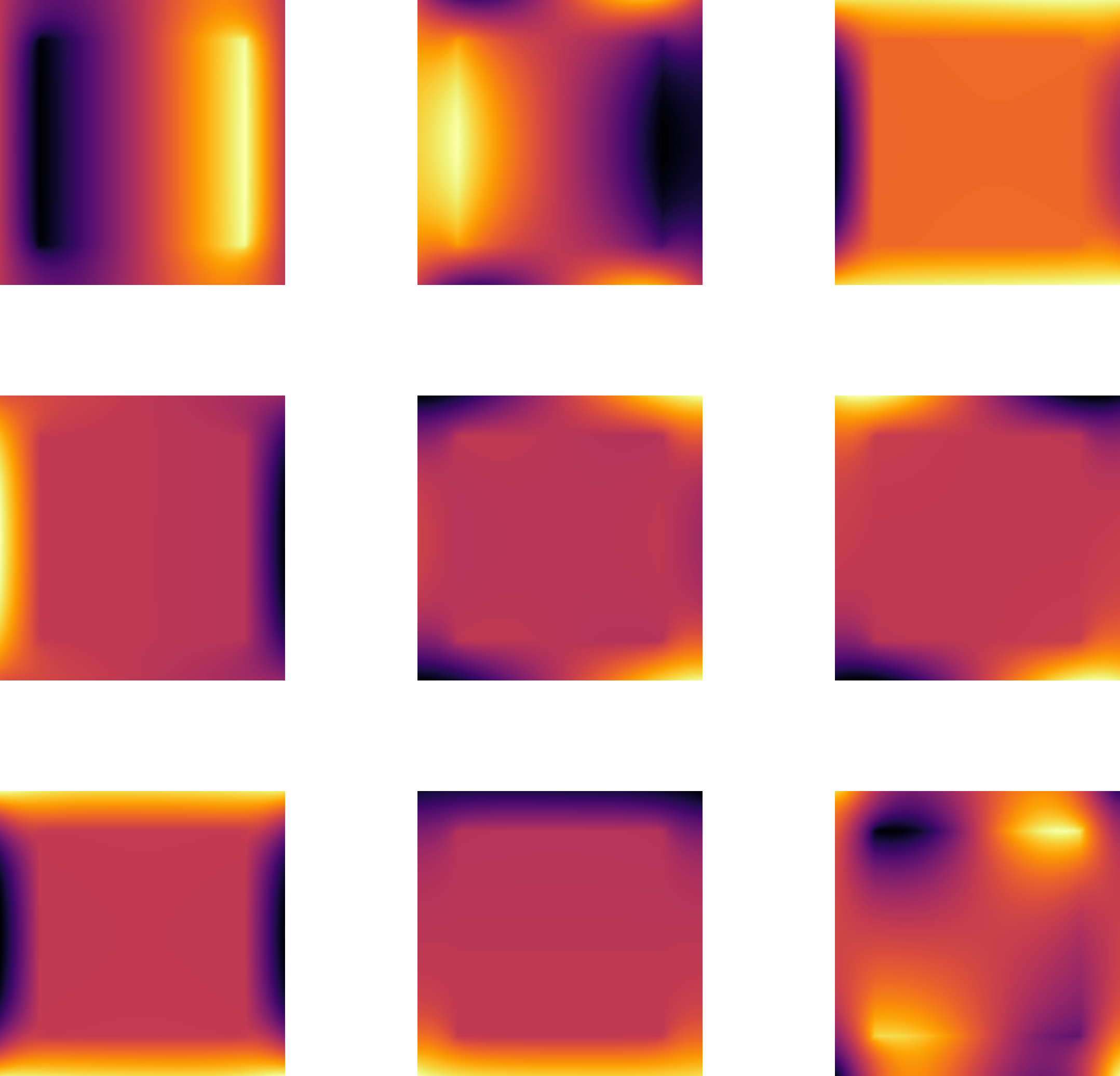}
    \caption{Modes $(v^Y_n)_{n\in\Zint{1}{9}}$}
    \label{fig:modes-example_library}
  \end{subfigure}
  \caption{Modes library example}
  \label{fig:modes-example}
\end{figure}

\subsection{Numerical examples}
\label{sec:lrm-tests}

Each but the last of the following numerical experiments will involve an approximation of the homogenised conductivity $K_\star$ following the process described in \cref{sec:stoch-homog}
with $D\subset\setR^2$. The truncated corrector problems~\cref{eq:app-corr-sto} will be solved using the multiscale low-rank method (MsLRM) outlined in \cref{sec:lrm}, and the homogenised conductivity will be approximated using a Monte Carlo estimator $\theta_m(K^{N}_\star)$ as expressed in equation~\cref{eq:mc-estimator}. The number of samples $m=m(\eta)$ will be selected to satisfy the error criterion
\begin{gather*}
  \norm*{\expec*{K^{N}_\star}-\theta_{m(\eta)}(K^{N}_\star)}_{L^2(\Omega)} \leqslant \eta,
\end{gather*}
using equation~\cref{eq:mc-conv-rate} and an estimation of $\var{\theta_{m(\eta)}(K^{N}_\star)}$. We denote such an estimator $\theta^\eta:=\theta_{m(\eta)}$.
Unless specified otherwise, the default parameters values in \cref{tab:default-param} apply to every experiment. These values will be discussed with experiments results below.

\begin{table}[hbt]
  \centering
  \caption{Default tests parameters}
  \begin{tabular}{cccccc}\toprule
    $N^2$      & $\vdim(V_h(Y))$ & $\varepsilon$  & $p$       & $K_2/K_1$ & $\eta$     \\\midrule
    \num{1600} & \num{441}       & \num{0.01} & \num{0.1} & \num{100} & \num{0.05} \\\bottomrule
  \end{tabular}
  \label{tab:default-param}
\end{table}

The reference periodic medium of our main test case has a square inclusion in each cell, more conductive than the other phase, and the two phases have equal volume fractions. The random perturbation is the absence of some inclusions, replaced with the other phase. The random conductivity can be expressed as
\begin{gather}
  \label{eq:K-bernoulli}
  K(i,y,\omega) = K_1 + B_i(\omega)\chi(y)(K_2 - K_1), \quad \forall(i,y,\omega)\in I\times Y\times\Omega,
\end{gather}
where the random variables $(B_i)_{i\in I}$ follow independent and identical Bernoulli laws with $\mathbb{P}(B_i=1)=p$, and $\chi:Y\to\{0,1\}$ is the characteristic function of the inclusion within the reference cell. $K$ is stationary according to \cref{def:stationarity}. \Cref{fig:bi-periodic} displays examples of such conductivity, associated correctors and source term, with the chosen reference cell $Y:=[0,1]^2$ outlined on \cref{fig:bi-periodic-K}. Every mesoscopic grid will be square unless specified otherwise.

\begin{figure}
  \centering
  \begin{subfigure}{.48\linewidth}
    \centering{}
    \includegraphics[width=\linewidth]{missing-squares_K.tikz}
    \caption{Conductivity $K$}
    \label{fig:bi-periodic-K}
  \end{subfigure}
  ~
  \begin{subfigure}{.48\linewidth}
    \centering{}
    \includegraphics[width=\linewidth,height=\linewidth]{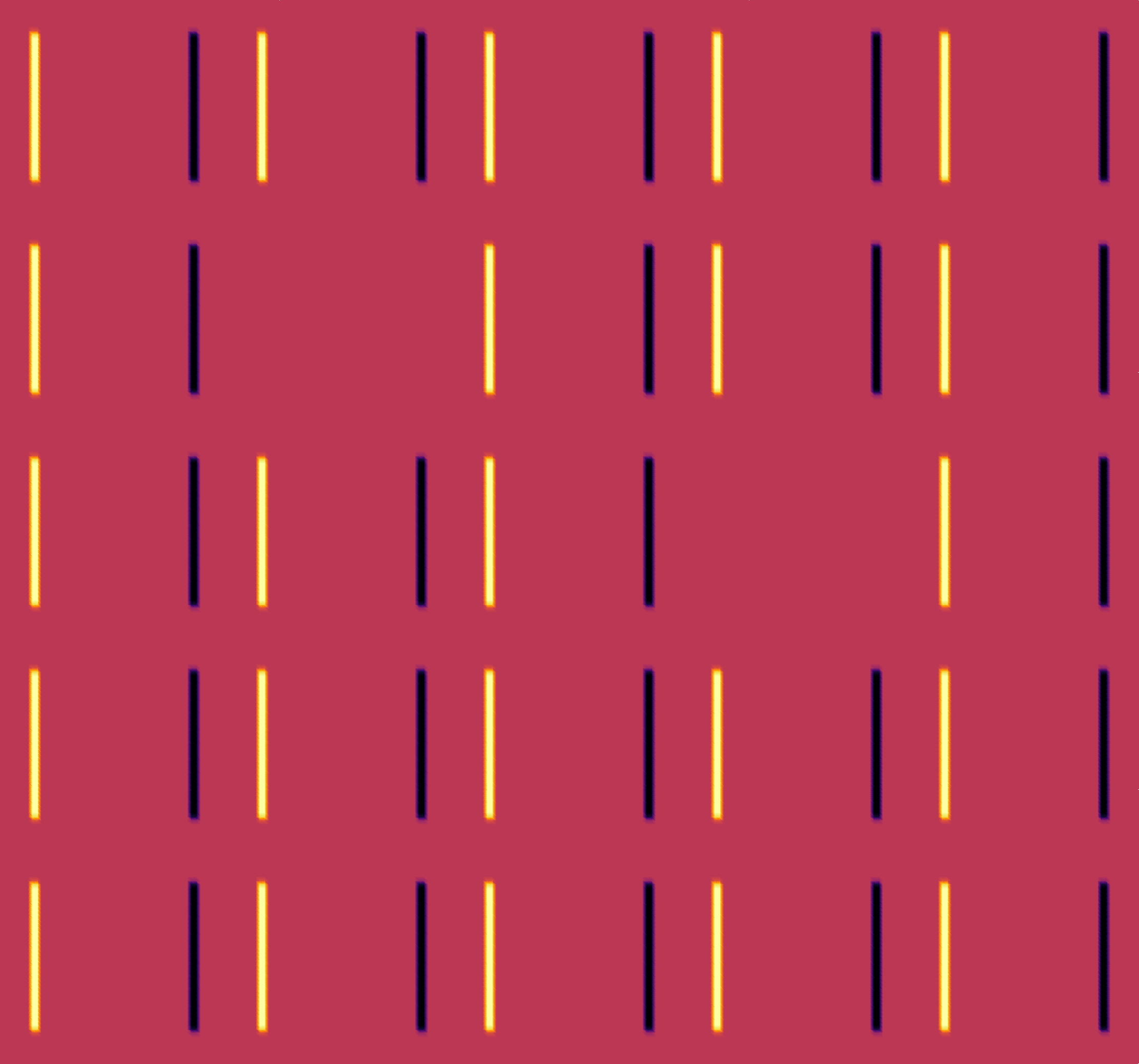}
    \caption{Source term $\diver Ke_1$}
    \label{fig:bi-periodic-src}
  \end{subfigure}
  
  \begin{subfigure}{.48\linewidth}
    \centering{}
    \includegraphics[width=\linewidth,height=\linewidth]{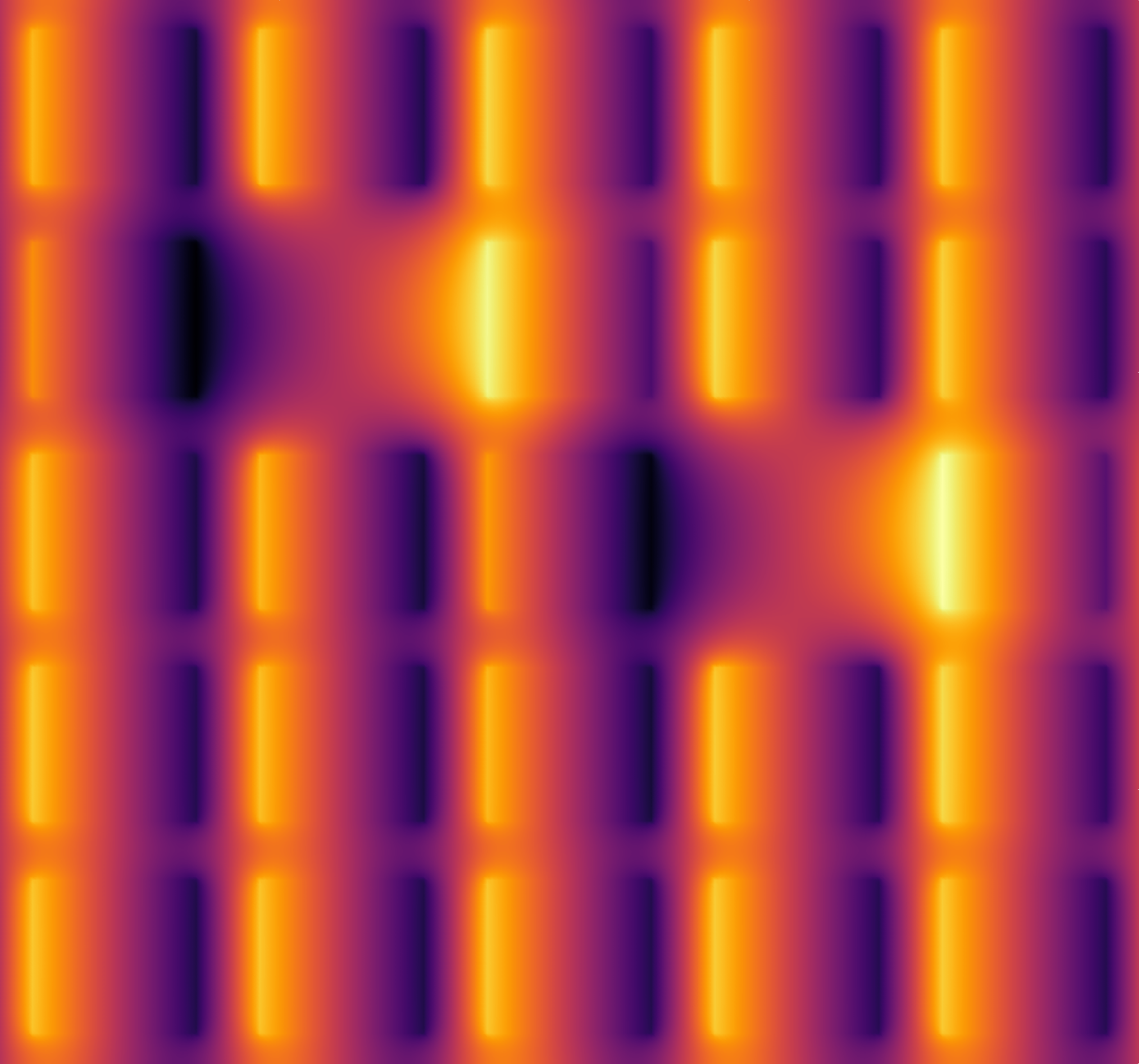}
    \caption{Corrector $w_{5,1}$}
    \label{fig:bi-periodic-corrector1}
  \end{subfigure}
  ~
  \begin{subfigure}{.48\linewidth}
    \centering{}
    \includegraphics[width=\linewidth,height=\linewidth]{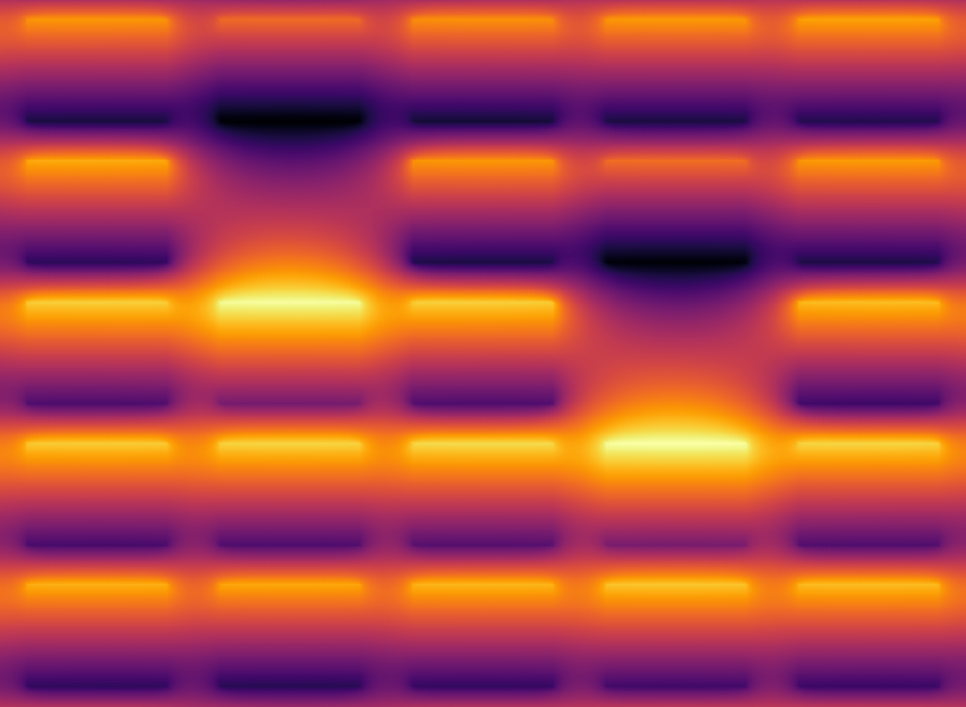}
    \caption{Corrector $w_{5,2}$}
    \label{fig:bi-periodic-corrector2}
  \end{subfigure}
  \caption{Main test case (example for $N^2 = 25$)}
  \label{fig:bi-periodic}
\end{figure}

Our reference when needed, both for accuracy and cost-efficiency evaluation, will be the exact same homogenisation process performed using a finite element method (FEM) with piecewise linear approximation for the corrector problems~\cref{eq:app-corr-sto}. For comparison purposes, its approximation space is built on an identical mesh, with the same degree of polynomials. This results in a problem of comparable size, if slightly smaller: a square domain of 400 cells with 400 finite elements each yield a FEM approximation space of dimension \num{160801} and a tensor approximation space $V_h(D)$ of dimension \num{176400}.

\label{sec:hardware}
All computations were run on the same workstation, \myie{} a Dell\texttrademark{} Optiplex\texttrademark{} 7010 with:
\begin{itemize}
\item \SI{8}{\gibi\byte}%
  \footnote{\num{5.5} to \num{6.5} of which were usually available for the simulations.}
  ($2\times4$) RAM DDR3 \SI{1600}{\mega\hertz};
\item Intel\textregistered{} Core\texttrademark{} i7-3770 CPU: 4 cores at \SI{3.40}{\giga\hertz} with 2 threads each.
\end{itemize}

\subsubsection{Approximation precision}
\label{sec:precision}

The trade-off between precision and cost-efficiency is fundamental to any approximation technique. We wish to assess the effect of MsLRM error on our quantity of interest $K_\star^{N}$. We computed the exact same%
\footnote{The same samples where used.}
Monte Carlo estimator $\theta_{20}(K_\star^{20})$ by FEM and MsLRM, with different tolerances $\varepsilon$ for MsLRM: \numlist{e-1;e-2;e-3;e-4;e-5}. FEM computational cost caused us to reduce the domain size and set an arbitrary low sample size, none of which interferes with the purpose of this test.

We can see on \cref{fig:precision_conv} how it affects the estimator's convergence, and it seems that a high tolerance shifts uniformly the estimation toward lower values: with inaccurately approximated correctors, the estimator underestimates the exact value (with a discrepancy apparently independent of $m$). Consequently, the error due to the approximation can not be compensated by a greater number of samples. We believe the cause to be an underestimation of the correctors' peaks at phase boundaries (visible on \cref{fig:bi-periodic-corrector1,fig:bi-periodic-corrector2} as darker or lighter areas) so that the approximated correctors represent a medium with smaller conductivity jumps at phase boundary, hence a lower homogenised conductivity. However, we observe no significant accuracy improvement on the homogenised quantity beyond a tolerance of \num{1e-3}: both the approximation of \numlist{1e-3;1e-5} seem as precise as the FEM reference. Even the approximation at \num{1e-2} yields very close results.

\Cref{fig:precision_time} shows the significant effect of $\varepsilon$ on cost-efficiency: it is clear that reducing the tolerance increases considerably the computational cost. There seems to be a tipping point between \numlist{1e-2;1e-3} after which the cost increase is steeper. The tolerance value will be set to \num{1e-2} henceforth as a satisfactory compromise.

\begin{figure}
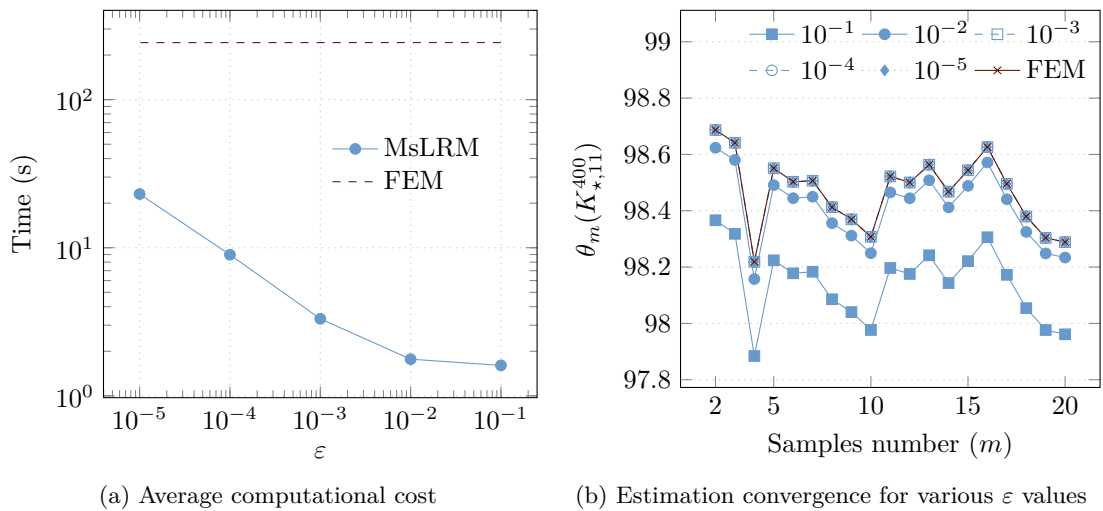

  \centering
  \begin{subfigure}{.48\linewidth}
    \centering
    \includegraphics[width=\linewidth,height=\plotDefaultHeight]{precision_time.tikz}
    \caption{Average computational cost}
    \label{fig:precision_time}
  \end{subfigure}
  ~
  \begin{subfigure}{.48\linewidth}
    \centering
    \includegraphics[width=\linewidth,height=\plotDefaultHeight]{precision_conv.tikz}
    \caption{Estimation convergence for various $\varepsilon$ values}
    \label{fig:precision_conv}
  \end{subfigure}
  \caption{MsLRM performance for various tolerances (\num{20} samples; $N^2=400$)}
  \label{fig:precision}
\end{figure}

\subsubsection{Modes recycling}
\label{sec:modes-recycling}

To illustrate modes recycling benefits, we computed the estimator $\theta^\eta(K^{N}_\star)$ with and without recycling. For the sake of graph clarity, only the first 20 samples were displayed on \cref{fig:recycling}. We see on \cref{fig:recycling_iter} the average corrector's rank per sample (averaged over both correctors). With recycling the average rank reaches immediately a plateau, which means the modes library was complete enough after the first sample. The computational cost is not reduced to zero however, as is shown on \cref{fig:recycling_time}. The larger the modes library, the higher the cost of initial step (\myie{} computing the best approximation possible on the current modes library), hence the drawback of non-optimal low-rank approximation.

Nevertheless, this reduces significantly the time required by subsequent computations, as can be seen on \cref{tab:recycling} from the average times and ranks, computed from all samples (not only the first 20). For comparison, the FEM time has been measured from a single computation. The average rank confirms that the modes library was complete with the first 6 modes.

\begin{figure}
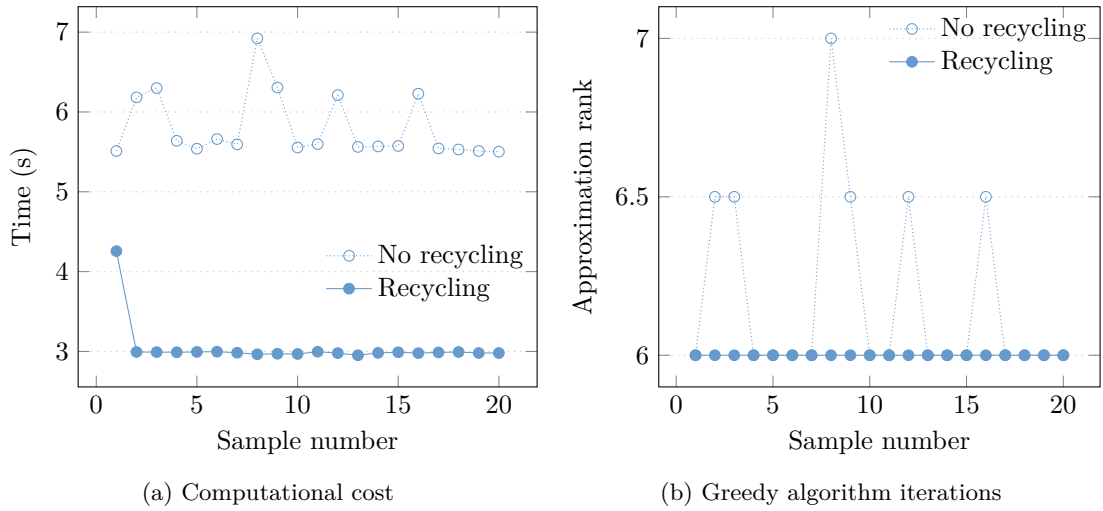

  \centering
  \begin{subfigure}{.48\linewidth}
    \centering
    \includegraphics[width=\linewidth,height=\plotDefaultHeight]{recycling_time.tikz}
    \caption{Computational cost}
    \label{fig:recycling_time}
  \end{subfigure}
  ~
  \begin{subfigure}{.48\linewidth}
    \centering
    \includegraphics[width=\linewidth,height=\plotDefaultHeight]{recycling_rank.tikz}
    \caption{Greedy algorithm iterations}
    \label{fig:recycling_iter}
  \end{subfigure}
  \caption[Modes recycling effect on MsLRM]{Modes recycling effect on MsLRM (averages across samples)}
  \label{fig:recycling}
\end{figure}

\begin{table}[hbt]
  \centering
  \caption{Comparison between FEM, MsLRM (with and without recycling)}
  \begin{tabular}{lcSS}\toprule
    & {FEM} & {MsLRM} & {MsLRM+recycling} \\\midrule
    {Average time per sample (s)}  & 4040  & 5.76  & 3               \\
    {Average rank per computation} & {--}  & 6.15  & 6               \\\bottomrule
  \end{tabular}
  \label{tab:recycling}
\end{table}

\subsubsection{Conductivity contrast}
\label{sec:contrast}

Here we evaluated the Monte Carlo estimator $\theta^\eta(K_\star^{N})$ with the same samples for different values of $K_2/K_1$: \numlist{2;e1;e2;e3;e4;e5}. The number of samples required were computed with formula~\cref{eq:mc-conv-rate} with a variance estimated with \num{100} samples. It can be seen on \cref{fig:contrast_conv} that this contrast affects greatly the estimator's variance, as expected. On the other hand, a high contrast seems to reduce the approximation cost (see \cref{fig:contrast_time}). One explanation is that the correctors' approximation is easier when the inter-phase peaks are so great that other variations become negligible. From now on, we will keep the conductivity contrast at \num{1e2} so that our medium remains significantly heterogeneous.

\begin{figure}
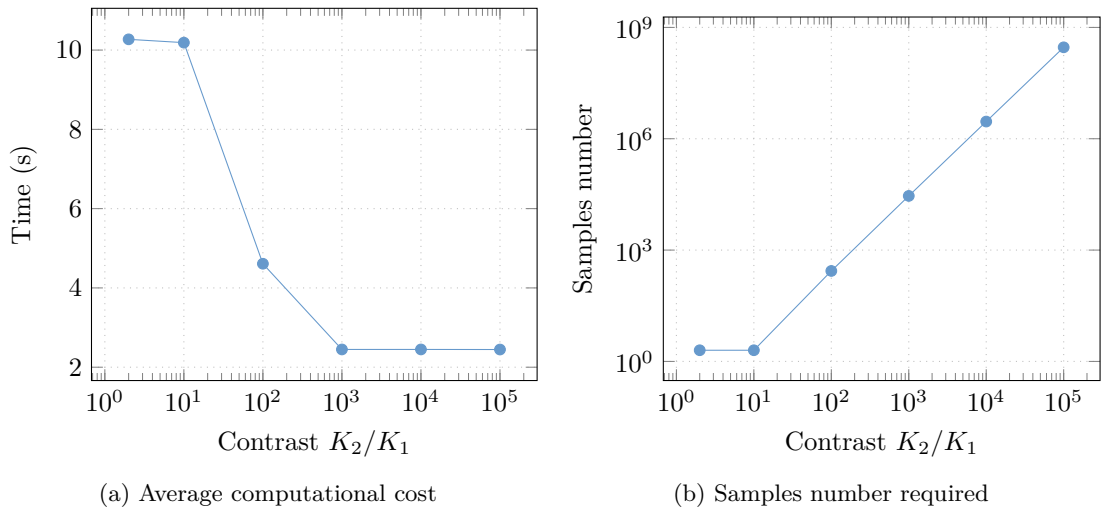

  \centering
  \begin{subfigure}{.48\linewidth}
    \centering
    \includegraphics[width=\linewidth,height=\plotDefaultHeight]{contrast_time.tikz}
    \caption{Average computational cost}
    \label{fig:contrast_time}
  \end{subfigure}
  ~
  \begin{subfigure}{.48\linewidth}
    \centering
    \includegraphics[width=\linewidth,height=\plotDefaultHeight]{contrast_conv.tikz}
    \caption{Samples number required}
    \label{fig:contrast_conv}
  \end{subfigure}
  \caption{Conductivity contrast effect on MsLRM performance}
  \label{fig:contrast}
\end{figure}

\subsubsection{Defect probability}
\label{sec:probability}

In a defect-type model such as described by equation~\cref{eq:K-bernoulli}, the defect probability $p$ is an important characteristic of the random medium. That is one way to quantify periodicity loss. We computed an estimation $\theta^\eta(K_\star^{N})$ for different values of $p$: \numlist{.01;.1;.2;.3;.4;.5}.

\Cref{fig:proba_rank} shows that the average approximation rank of correctors increases with $p$, as expected, until it reaches a plateau. Beyond a certain value of $p$ (\num{0.3} here), every configuration that would require new modes has become likely enough to occur, hence a rank plateau. We can relate this observation to the library completion previously discussed.

\Cref{fig:proba_conv} shows the impact of $p$ on the estimator's variance (recalling the estimator's convergence rate~\cref{eq:mc-conv-rate}). We set $p$ to \num{0.1} to keep the number of samples moderate.

\begin{figure}
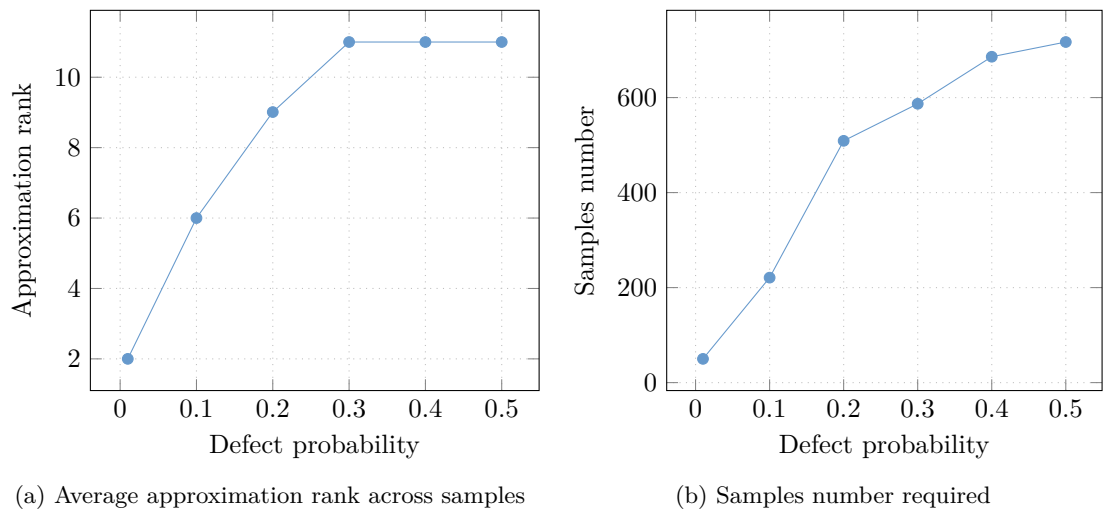

  \centering
  \begin{subfigure}{.48\linewidth}
    \centering
    \includegraphics[width=\linewidth,height=\plotDefaultHeight]{proba_rank.tikz}
    \caption{Average approximation rank across samples}
    \label{fig:proba_rank}
  \end{subfigure}
  ~
  \begin{subfigure}{.48\linewidth}
    \centering
    \includegraphics[width=\linewidth,height=\plotDefaultHeight]{proba_conv.tikz}
    \caption{Samples number required}
    \label{fig:proba_conv}
  \end{subfigure}
  \caption{Defect probability effect on MsLRM performance}
  \label{fig:proba}
\end{figure}

\subsubsection{Mesoscopic size}
\label{sec:mesoSize}

Recalling result~\cref{eq:mc-conv-rate}, building an estimator with a good convergence rate is a compromise between domain size (which affects variance, see \cref{sec:var-reduction}) and number of samples. We showed a way to curb MsLRM computational cost increase with respect to sample number with modes recycling. However, an immediate advantage of MsLRM is its low sensitivity to domain size increase, even though modes recycling further reduces it. To illustrate this, we evaluated $\theta^\eta(K_\star^{N})$ for several numbers of cells: \numlist{100;400;900;1600;2500;3600}.

Additionally, we wish to broach the case of unidirectional periodicity. We dealt only with bi-directional periodicity so far, on square mesoscopic grids. Let us consider a unidirectional mesoscopic grid with reference cell $Y:=[0,1]\times[0,10]$ so that a five-cells domain is $[0,5]\times[0,10]$ as illustrated on \cref{fig:mono-periodic} (with $Y$ outlined on \cref{fig:mono-periodic-K}). We choose a conductivity $\hat{K}$ of the same form~\cref{eq:K-bernoulli} but with $\chi$ replaced with $\hat{\chi}$ to get the patterns shown on \cref{fig:mono-periodic-K}. This test case was inspired by unidirectional fibre-reinforced composite materials. We evaluate $\theta^\eta(\hat{K}_\star^{N})$ as well, with the same values as above for the number of cells and the other parameters still set according to \cref{tab:default-param}.

\begin{figure}
  \centering
  \begin{subfigure}{.3\linewidth}
    \centering{}
    \includegraphics[height=\pictureDefaultHeight]{missing-fibres_K.tikz}
    \caption{Conductivity $K$}
    \label{fig:mono-periodic-K}
  \end{subfigure}
  ~
  \begin{subfigure}{.3\linewidth}
    \centering{}
    \includegraphics[height=\pictureDefaultHeight, width=.5\pictureDefaultHeight]{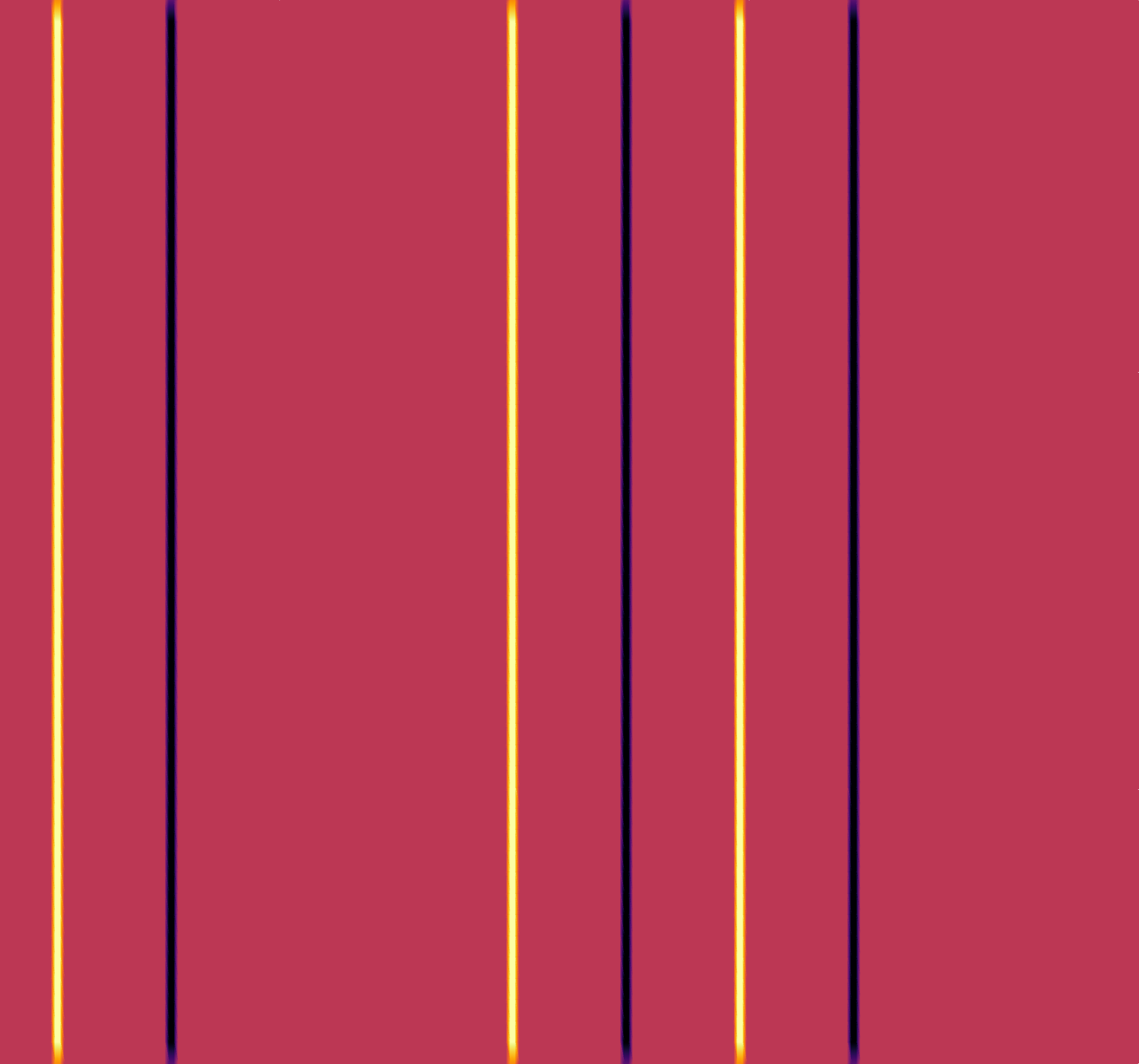}
    \caption{Source term $f=\diver Ke_1$}
  \end{subfigure}
  ~
  \begin{subfigure}{.3\linewidth}
    \centering{}
    \includegraphics[height=\pictureDefaultHeight, width=.5\pictureDefaultHeight]{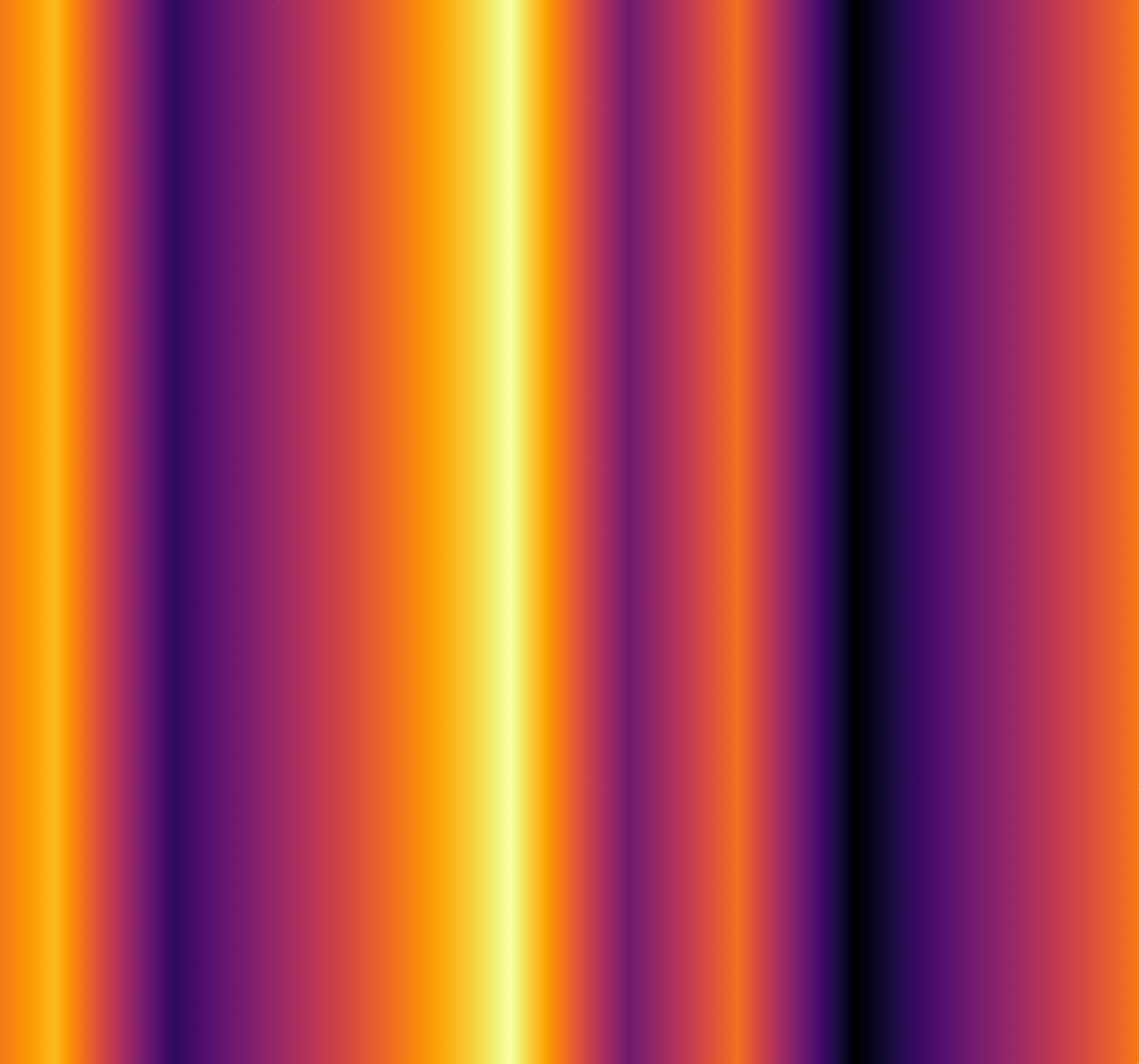}
    \caption{Corrector $w_1$}
  \end{subfigure}
  \caption{Unidirectional periodic medium (example with five cells)}
  \label{fig:mono-periodic}
\end{figure}

First, we see on \cref{fig:mesoSize_time} the complexity reduction achieved, whatever the mesoscopic dimension. The MsLRM computational cost increase when the number of cells grows is moderate, particularly in comparison to the cost of the corresponding FEM solution%
\footnote{Measured from a single computation.}
(displayed on the same graph, with identical values for both cases). Furthermore, \cref{fig:mesoSize_rank} shows the rank's plateau caused by $K$'s stationarity, as was discussed in \cref{sec:lrm} on the topic of modes recycling.

Secondly, it is clear from \cref{fig:mesoSize_rank} that the complexity of the unidirectional case is considerably lower. A uni-dimensional grid brings the approximation's rank much closer to $K$'s: any local loss of periodicity (faulty cell here) has much fewer neighbours to affect than in the previous case. The slight difference on \cref{fig:mesoSize_time} comes from a better conditioning of the bi-directional problem, due to homogeneity of $K$ on cells boundaries.

As for the approximation in the bi-directional case, although it is almost constant the first point (\num{100} cells) on \cref{fig:mesoSize_rank} is worth commenting. We mentioned that the modes library is built in a non-optimal way; furthermore, it can only grow. Here, for every domain of \num{400} cells or more, the completion was almost immediate, \myie{} the first sample was a good representation of the medium as far as the algorithm was concerned. With \num{100} cells, however, several samples had to be processed before reaching completion, each one causing some modes to be added to the library. These modes were the best choices for the sample, but not necessarily with regards to every possible realisation, hence a larger library.

Finally, \cref{tab:mesoSize} shows how domain size affects the convergence rate of Monte Carlo estimation, as was mentioned above. Those values were estimated from \num{100} samples (as were all averages computed for this test case). For these estimators of identical variance $\eta$, the improved convergence rate somewhat balances the increase in computational time per sample in the overall cost of estimation, as the measured total times testify. Consequently, and although there is generally a compromise to be found to minimise the cost (more details in \cref{sec:var-reduction}), we observe a wide range of acceptable values for $\#I$. The cost seems minimal around \num{1600} cells, so we chose this value as default (see \cref{tab:default-param}).

\begin{figure}
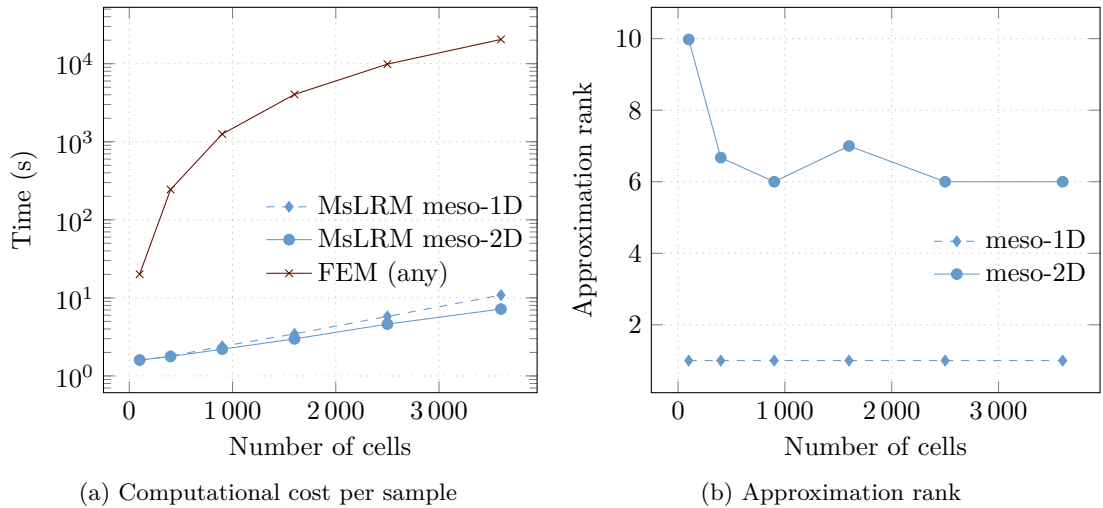

  \centering
  \begin{subfigure}{.48\linewidth}
    \centering
    \includegraphics[width=\linewidth,height=\plotDefaultHeight]{mesoSize_time.tikz}
    \caption{Computational cost per sample}
    \label{fig:mesoSize_time}
  \end{subfigure}
  ~
  \begin{subfigure}{.48\linewidth}
    \centering
    \includegraphics[width=\linewidth,height=\plotDefaultHeight]{mesoSize_rank.tikz}
    \caption{Approximation rank}
    \label{fig:mesoSize_rank}
  \end{subfigure}
  \caption{Domain size effect on MsLRM and FEM performance for uni- and bi-directional periodicity (averages across samples)}
  \label{fig:mesoSize}
\end{figure}

\begin{table}
  \centering
  \caption{Domain size effect on samples number $m(\eta)$ and time required for estimation}
    \begin{tabular}{@{}c*{8}{S[table-format=4]}S[table-format=3]*{3}{S[table-format=4]}@{}}\toprule
        & \multicolumn{6}{c}{1D} & \multicolumn{6}{c}{2D}                                                    \\\cmidrule(lr){2-7}\cmidrule(l){8-13}
    $\#I$               & 100                    & 400  & 900  & 1600 & 2500 & 3600 & 100  & 400  & 900 & 1600 & 2500 & 3600 \\\midrule
    $m(\eta)$ & 4116                   & 1107  & 467  & 229  & 164  & 106  & 4015 & 1089  & 457 & 227  & 163  & 105  \\ 
    Time (s)            & 6480                   & 2012 & 1164 & 813 & 1030 & 1142 & 6552 & 1927 & 979 & 746  & 758  & 761  \\\bottomrule
  \end{tabular}
  \label{tab:mesoSize}
\end{table}

\subsubsection{Aperiodic media with mesoscopic structure}
\label{sec:aperiodic}

In order to address a more general case where both the conductivity $K_2$ and the area it encompasses (represented by the characteristic function $\chi$) may be random, we consider a broader variant of conductivity expression~\cref{eq:K-bernoulli}. For $(i,y,\omega)\in I\times Y\times\Omega$, we let
\begin{gather*}
  K(i,y,\omega) = K_1 + \chi_i(y,\omega)(K_{2,i}(\omega) - K_1),
\end{gather*}
where $K_{2,i}(\omega)\in \mathcal{U}([K_2^-,K_2^+])$ are independent and identically distributed, and each $\chi_i(\cdot,\omega):Y\to\{0,1\}$ is the characteristic function of the square of side length $\ell_i(\omega)$ centred relatively to the cell. The random variables $\ell_i\in\mathcal{U}(\Zint{L_1}{L_q}), i\in I,$ are \myiid{} as well. An example is displayed on \cref{fig:aperiodic-example} and the chosen values for our tests are in \cref{tab:uniform-laws}.

\begin{table}
  \centering
  \caption{Uniform laws' parameters}
  \begin{tabular}{>{$}c<{$}>{$}c<{$}>{$}c<{$}}\toprule
    K_2^- & K_2^+ & (L_i)_{i\in\Zint{1}{10}} \\\midrule
    1     & 100   & (i-1)\times 0.1          \\\bottomrule
  \end{tabular}
  \label{tab:uniform-laws}
\end{table}

We call such medium \enquote{aperiodic} because, for $i\neq j$, $\mathbb{P}(K(i,\cdot,\cdot)=K(j,\cdot,\cdot)) = 0$. Nevertheless, there is a spatial structure such that a mesoscopic grid can be defined, and we outlined the chosen reference cell $Y$ on \cref{fig:aperiodic-K}. Here the rank $r_K$ of $K$ is a random variable whose values are bounded according to $(\ell_i)_{i\in I}$ probability laws: $r_K\leqslant 1+q$ almost surely in this case. However, had we chosen $(\ell_i)_{i\in I}$ to be \emph{continuous} random variables (\myeg{} $\ell_i\in\mathcal{U}([L_1,L_q])$), then $r_K=N^2$ almost surely.

\begin{figure}
  \centering
  \begin{subfigure}{.48\linewidth}
    \centering{}
    \includegraphics[width=\linewidth,height=\linewidth]{aperiodic_conductivity.tikz}
    \caption{Conductivity $K$}
    \label{fig:aperiodic-K}
  \end{subfigure}
  ~
  \begin{subfigure}{.48\linewidth}
    \centering{}
    \includegraphics[width=\linewidth,height=\linewidth]{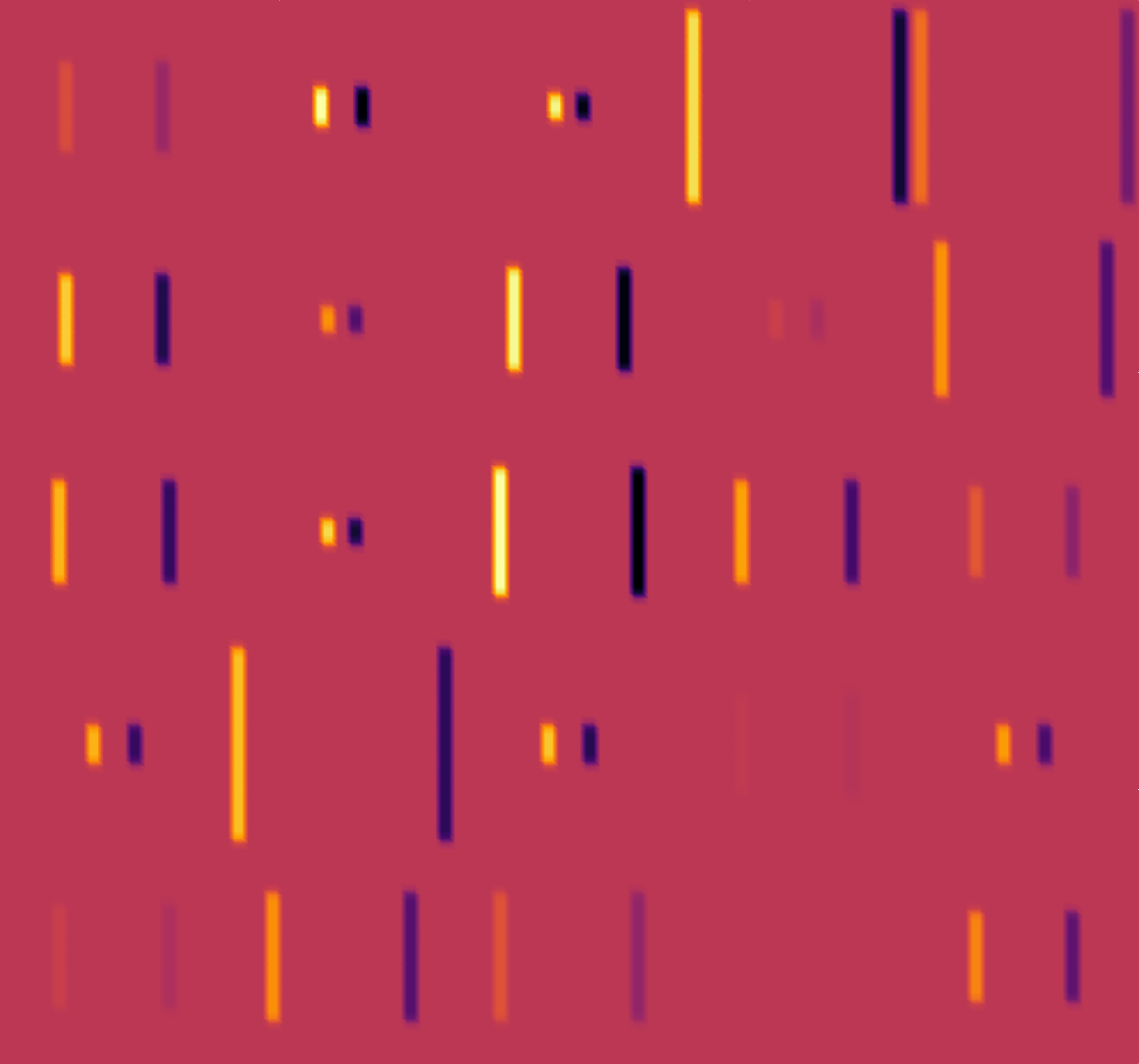}
    \caption{Source term $\diver Ke_1$}
    \label{fig:aperiodic-src}
  \end{subfigure}
  
  \begin{subfigure}{.48\linewidth}
    \centering{}
    \includegraphics[width=\linewidth,height=\linewidth]{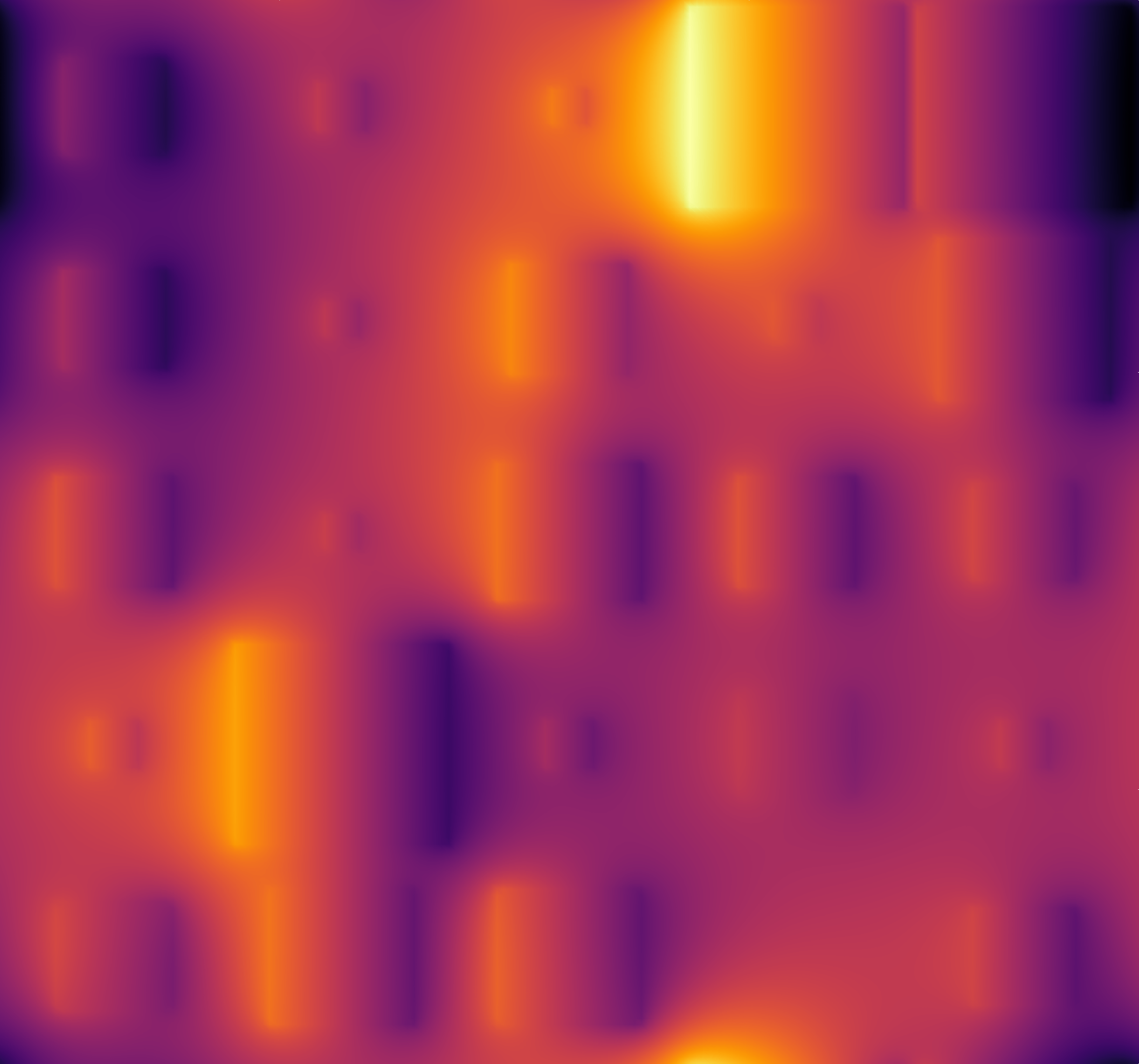}
    \caption{Corrector $w_{5,1}$}
    \label{fig:aperiodic-corrector1}
  \end{subfigure}
  ~
  \begin{subfigure}{.48\linewidth}
    \centering{}
    \includegraphics[width=\linewidth,height=\linewidth]{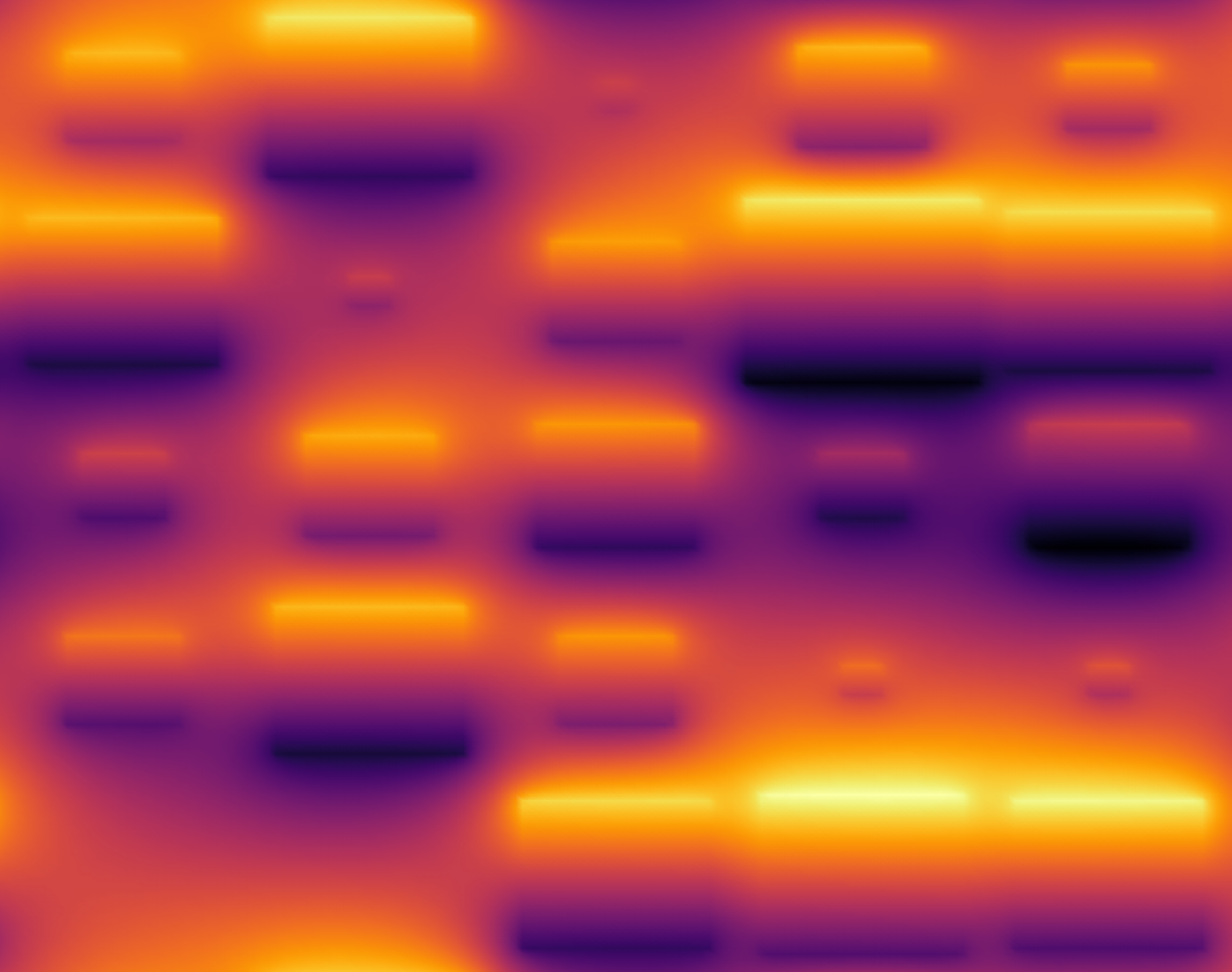}
    \caption{Corrector $w_{5,2}$}
    \label{fig:aperiodic-corrector2}
  \end{subfigure}
  \caption{Aperiodic medium example for \num{25} cells}
  \label{fig:aperiodic-example}
\end{figure}

To get insight into the complexity of such problem, we perform the same tests as in \cref{sec:mesoSize} (only the bi-directional case). The comparison of \cref{fig:aperiodic_rank,fig:mesoSize_rank} reveals the increased complexity of the current case, and explains the difference between \cref{fig:aperiodic_time,fig:mesoSize_time}; this was expected from the conductivities' rank difference: 2 against 10. Incidentally, this difference is due exclusively to the geometric variability introduced with $(\chi_i)$; indeed a rank-2 \enquote{aperiodic} conductivity (in the sense explained above) is possible. However, the FEM computation times remind that the complexity reduction remains significant even in such aperiodic medium. As a side note, the computer reached its memory limit during MsLRM computations for $N^2:=3600$, which somewhat diminished the performance.

Additionally, the increased rank of the approximations makes modes recycling particularly relevant here: the average values on \cref{fig:aperiodic-results} are based on \num{100} samples, and a greater number of samples would decrease the average time as the impact of the first computations' cost (from which the library was built) fades. To give more insight, the cost per sample on a complete library varied between \SIlist{5;17}{\percent} of the first sample's computation cost. Modes library was essentially completed within the first \num{8} samples, except the case $N^2=100$ which required around \num{35}. The decreasing trend on \cref{fig:aperiodic_rank} has the same explanation as was given in \cref{sec:mesoSize}, with greater variability in $K$.

Finally, we show on \cref{tab:aperiodic} the number of samples required and associated total computation time (estimated from \num{100} samples). The comparison with \cref{tab:mesoSize} shows how the increased computational cost has shifted the optimal cell number downward. In both tables, however, appears a minimal number of cells above which the total cost is of the same order of magnitude.

\begin{figure}
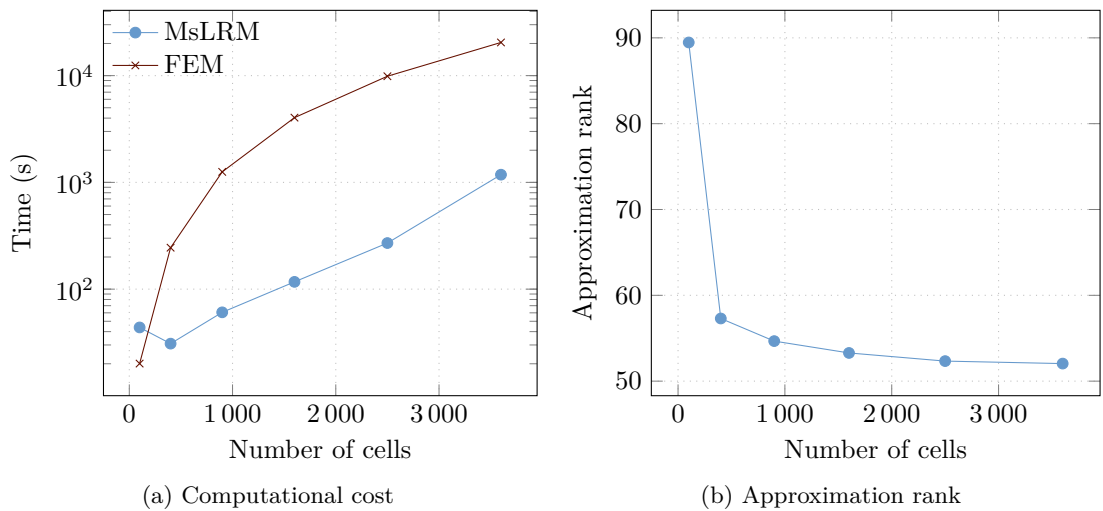

  \centering
  \begin{subfigure}{.48\linewidth}
    \centering
    \includegraphics[width=\linewidth,height=\plotDefaultHeight]{aperiodic_time}
    \caption{Computational cost}
    \label{fig:aperiodic_time}
  \end{subfigure}
  ~
  \begin{subfigure}{.48\linewidth}
    \centering
    \includegraphics[width=\linewidth,height=\plotDefaultHeight]{aperiodic_rank}
    \caption{Approximation rank}
    \label{fig:aperiodic_rank}
  \end{subfigure}
  \caption{MsLRM performance on aperiodic medium (averages across samples)}
  \label{fig:aperiodic-results}
\end{figure}

\begin{table}
  \centering
  \caption{Sample number and time required on aperiodic medium}
  \begin{tabular}{c*{6}{S[table-format=6]}}\toprule
    $N^2$   & 100  & 400 & 900 & 1600 & 2500 & 3600 \\\midrule
    $m(\eta)$ & 4627 & 961 & 510 & 297  & 191  & 124  \\
    Time (s) & 202530 & 29637 & 30889 & 34707 & 51475 & 145960 \\\bottomrule
  \end{tabular}
  \label{tab:aperiodic}
\end{table}

\subsubsection{Transformed media}
\label{sec:mapping}

We illustrate here the use of MsLRM in the variant of stochastic homogenisation detailed in \cref{sec:homog-map}. We define a periodic $K_\sharp=K_1 + \hat{\chi}(K_2-K_1)$ with $\hat{\chi}$ introduced in \cref{sec:mesoSize} for the unidirectional case (see \cref{fig:mono-periodic}) and choose a random diffeomorphism $\phi$ and let $K:=K_\sharp\circ\phi^{-1}$. No fibre is missing this time, but the spaces between fibres are independent random variables $(\ell_i)_{i\in\Zint{0}{\#I}}$ following the same uniform law $\mathcal{U}([0.1,2])$. The two limit fibres in $D_1$ and $D_{\#I}$ are separated from the domain boundary by respectively $\ell_0/2$ and $\ell_{\#I}/2$ and the fibre width is a constant denoted $\delta$.  The chosen random diffeomorphism $\phi$ is then continuous, piecewise affine and expressed component-wise $\phi(i,y) = \phi_1(i,y)e_1 + (\zeta(i,y)\cdot e_2)e_2$. Therefore, nothing is changed along $e_2$ and $\phi_1$ has the rank-4 representation $\phi_1 = \phi_0^I\otimes 1^Y + \phi_1^I\otimes\phi^Y_1 + 1^I\otimes\phi^Y_2 + \phi_3^I\otimes\phi^Y_3$, where
\begin{gather*}
  \phi_0^I(i)=
  \begin{dcases}
    0 & \text{if } i=1 \\
    \frac{\ell_0}{2} + \frac{\ell_{i-1}}{2} + (i-1)\delta + \sum_{k=1}^{i-2}\ell_k & \text{else}
  \end{dcases},\;
  \phi_1^I(i)= \frac{\ell_{i-1}}{1-\delta}  ,\;
  \phi_3^I(i)= \frac{\ell_{i}}{1-\delta} ,
\end{gather*}
and
\begin{multline*}
  \phi_1^Y(y)= \min\mleft(y_1,\frac{1-\delta}{2}\mright) ,\;
  \phi_2^Y(y)= \max\mleft(0,\min\mleft(\delta,y_1-\frac{1-\delta}{2}\mright)\mright) ,\; \\
  \phi_3^Y(y)= \max\mleft(0,y_1-\frac{1+\delta}{2}\mright).
\end{multline*}
We noted $1^I$ and $1^Y$ the constant functions of value \num{1} and $y_1:=y\cdot e_1$.
This gradient of this transformation is expressed
\begin{gather*}
  \grad\phi =
  \begin{pmatrix}
    \phi_{1,1} & 0 \\ 0 & 1^I\otimes 1^Y
  \end{pmatrix}
  \quad \text{with } \phi_{n,1} = \pderiv{\phi_n}{x_1},
\end{gather*}
so $\deter{\grad \phi} = \phi_{1,1} = \phi_1^I \otimes \phi^Y_{1,1} + 1^I\otimes\phi^Y_{2,1} + \phi_3^I\otimes\phi^Y_{3,1}$. Here $\grad\mleft(\phi^{-1}\mright)=\mleft(\grad\phi\mright)^{-1}$ thus the ensuing simplifications in $\mapK := \deter{\grad\phi} \tensor*[]{\grad\phi}{^{-1}} K_\sharp \tensor[^t]{\grad\phi}{^{-1}}$ yield $\rank{\mapK}=3$. The effects of this transformation are illustrated on \cref{fig:mapping-example} with an example of realisation of $K$ and its associated source term and first corrector function.

In this test, we solve the corrector problems~\cref{eq:app-corr-map}. We perform the same tests as for the unidirectional case in \cref{sec:mesoSize}. The performance, shown on \cref{fig:mapping-results}, is between that on \cref{fig:mesoSize} and that on \cref{fig:aperiodic-results}, which suggests that these tests are comparable enough for the performance to be dominated by $r_K$'s influence. This is a consequence of the correctors problems' nature: the source term follows the same quasi-periodicity as $K$, and boundary conditions do not break this quasi-periodicity in the solution. Therefore, the only periodicity loss in the solution comes from $K$.

\begin{figure}
  \centering{}
  \begin{subfigure}{.3\linewidth}
    \centering
    \includegraphics[width=.653\pictureDefaultHeight]{mapping-K.tikz}
    \caption{Conductivity $K$}
  \end{subfigure}
  ~
  \begin{subfigure}{.3\linewidth}
    \centering
    \includegraphics[width=.653\pictureDefaultHeight, height=\pictureDefaultHeight]{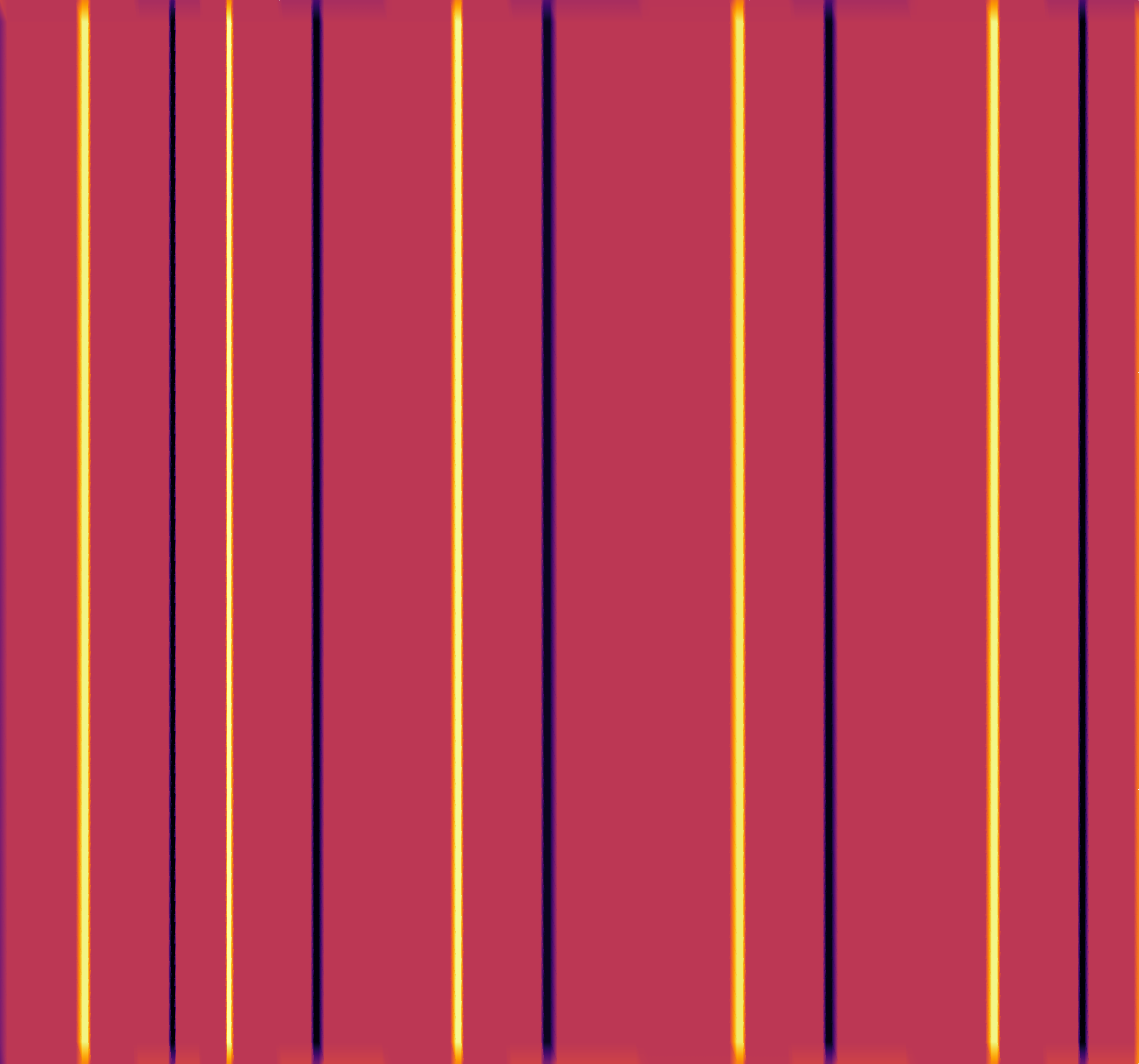}
    \caption{Source term $f=\diver(K e_1)$}
  \end{subfigure}
  ~
  \begin{subfigure}{.3\linewidth}
    \centering
    \includegraphics[width=.653\pictureDefaultHeight, height=\pictureDefaultHeight]{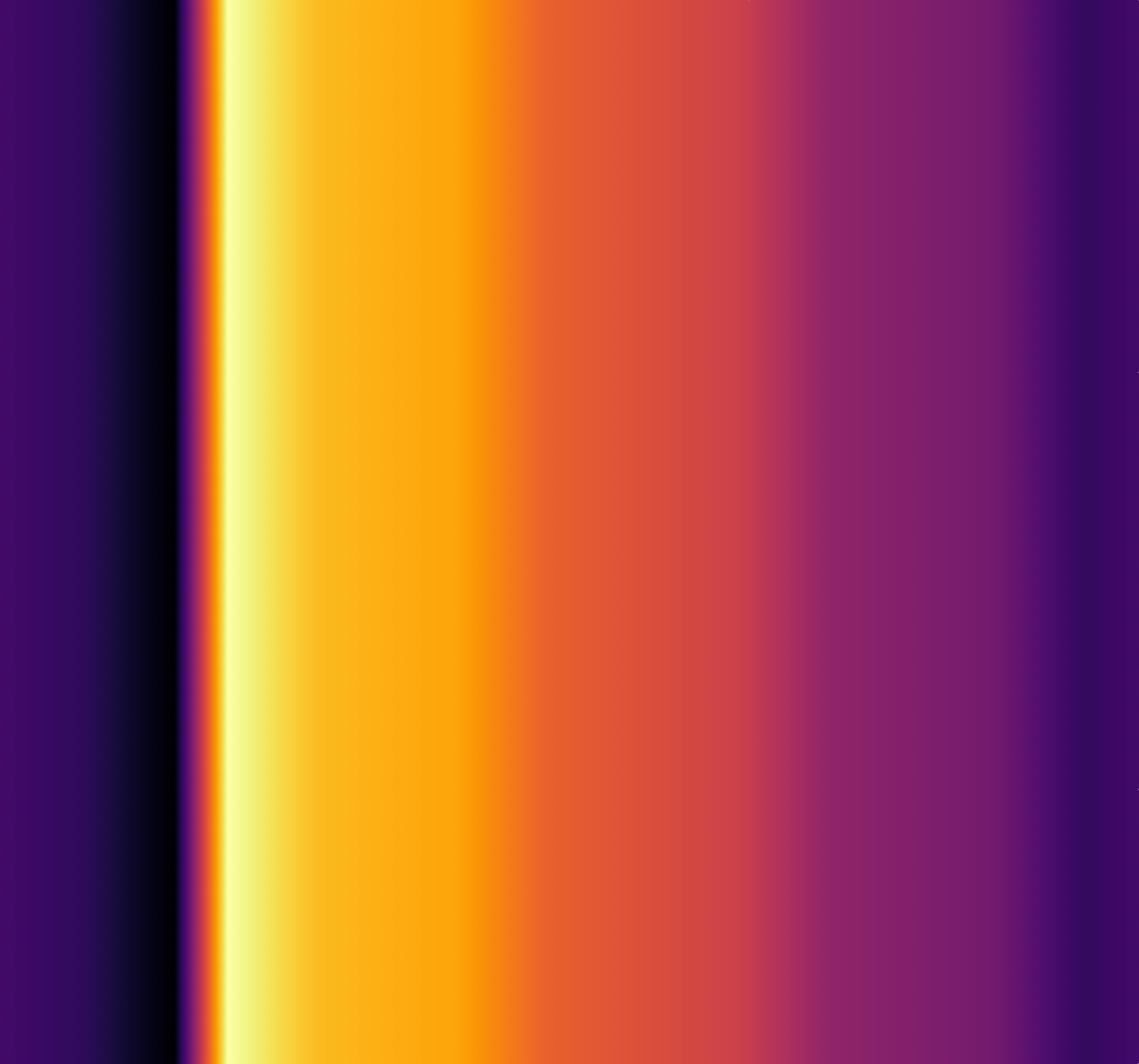}
    \caption{Corrector $w_{5,1}$}
  \end{subfigure}
  \caption{Example of randomly transformed medium for \num{5} cells.}
  \label{fig:mapping-example}
\end{figure}

\begin{figure}
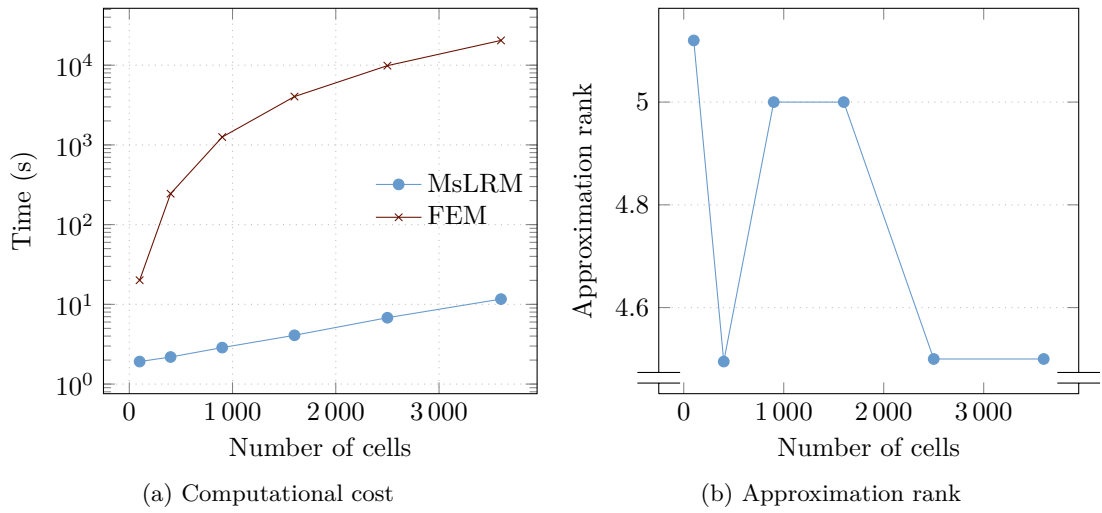

  \centering
  \begin{subfigure}{.48\linewidth}
    \centering
    \includegraphics[width=\linewidth,height=\plotDefaultHeight]{mapping_time.tikz}
    \caption{Computational cost}
    \label{fig:mapping_time}
  \end{subfigure}
  ~
  \begin{subfigure}{.48\linewidth}
    \centering
    \includegraphics[width=\linewidth,height=\plotDefaultHeight]{mapping_rank.tikz}
    \caption{Approximation rank}
    \label{fig:mapping_rank}
  \end{subfigure}
  \caption{MsLRM performance on transformed medium (averages across samples)}
  \label{fig:mapping-results}
\end{figure}

\section{Variance reduction by control variate}
\label{sec:var-reduction}

The error associated with approximation of $K_\star$ by $\theta_m(K_\star^N)$ (from~\cref{eq:mc-estimator})  can be decomposed as
\begin{gather*}
  K_\star - \theta_m(K_\star^N)
    = \underbracket[.8pt]{K_\star - \expec*{K_\star^N}}_{\text{\textbf{systematic} error}} + \underbracket[.8pt]{\expec*{K_\star^N} -  \theta_m(K_\star^N)}_{\text{\textbf{statistical} error}}.
  \end{gather*}
It has been proven that
\begin{gather*}
  \expec*{K_\star - \expec*{K_\star^N}}\lesssim N^{-d}\ln^d(N)
  \quad\text{\parencite[in][]{Gloria2011}} \\
  \text{and}\quad
  \sqrt{\expec*{(\expec*{K_\star^N}-\theta_m(K_\star^N))^2}} \lesssim (N^dm)^{-1/2}
  \quad\text{\parencites(in)(){Gloria2012}{Gloria2015a}}
\end{gather*}
for a cubic domain of side length proportional to $N$. Consequently, the bias decreases fast with domain size and the statistical error is usually the major contribution to the error. As was mentioned in equation~\cref{eq:mc-conv-rate}, this statistical error can be quantified as
\begin{gather*}
  \expec*{\mleft(\theta_m(\rvi{})-\expec*{\rvi{}}\mright)^2} = \var*{\theta_m(\rvi{})} = \frac{\var*{\rvi{}}}{m}
\end{gather*}
for any random variable $\rvi{}\in L^2(\Omega)$, hence the numerous variance reduction techniques developed to accelerate Monte Carlo method's convergence rate.
Such techniques include stratified sampling, importance sampling, antithetic variates, control variates and many more; we are interested in the latter.

Let $\rho\in\setR$ and let $Z\in L^2(\Omega)$ be a random variable. $Z$ serves as a control variate by defining the controlled variable $X = \rvi{} - \rho(Z-\expec{Z})$.
We have, purposely, $\expec{\rvi{}}=\expec{X}$.
As for the variance, $\var{X} = \var{\rvi{}} - 2\rho\covar{\rvi{},Z} + \rho^2\var{Z}$ and is minimal for
\begin{align}
  \label{eq:argmin-var}
  \rho & = \frac{\covar*{\rvi{},Z}}{\var*{Z}}, \\
  \intertext{with minimum}
  \label{eq:min-var}
  \min_{\rho\in\setR}\var*{X} & = \mleft(1-\frac{\mleft(\covar*{\rvi{},Z}\mright)^2}{\var*{\rvi{}}\var*{Z}}\mright)\var*{\rvi{}}.
\end{align}
We observe from equation~\cref{eq:min-var} that, whatever $Z$, a variance reduction can be achieved with a suitable parameter $\rho$. A good control variate would be closely correlated to $\rvi{}$: the higher the correlation between $\rvi{}$ and $Z$, the greater the variance reduction. However, $\expec{Z}$ must be precisely known lest the control variate introduces a bias. Additionally, the benefit of an optimal $\rho$ comes at the cost of estimating $\var{Z}$ and $\covar{\rvi{},Z}$.

What we actually do is estimate $\expec{X}=\expec{U}+\rho\expec{Z}$, with $U:=\rvi{}-\rho Z$, as
\begin{gather}
  \label{eq:reduced-estimator}
  \theta_{n_U,n_Z}(X) = \frac{1}{n_U}\sum_{k=1}^{n_U}U(\omega_k) + \frac{\rho}{n_Z}\sum_{k=1}^{n_Z}Z(\omega'_k) \\
  \intertext{where $\omega$ and $\omega'$ are \myiid{} samples, with variance}
  \label{eq:var-sum}
  \var*{\theta_{n_U,n_Z}(X)} = \frac{\var*{U}}{n_U} +  \frac{\rho^2\var*{Z}}{n_Z}.
\end{gather}
This estimation's cost is $c_{n_U,n_Z} = n_Uc_U + n_Zc_Z$ with $c_U:=c_0+c_Z$, where $c_0:=\cost(\rvi{})$ and $c_Z:=\cost(Z)$ denote the cost of an evaluation of $\rvi{}$ and $Z$, respectively. For a given variance $\eta^2$, we minimise $c_{n_U,n_Z}$ over $(n_U,n_Z)\in\setR^2$ by solving the equivalent saddle-point problem
\begin{gather*}
  \max_{\lambda\geqslant0} \min_{n_U,n_Z\geqslant0} c_un_u+c_Zn_Z+ \lambda \mleft(\frac{\var*{U}}{n_U}+\frac{\rho^2\var*{Z}}{n_Z}-\eta^2\mright),
\end{gather*}
which yields $n_U\sqrt{c_U\rho^2\var{Z}}=n_Z\sqrt{c_Z\var{U}}$.
Therefore, equation~\cref{eq:var-sum} now reads
\begin{gather}
  \var*{\theta_{n_U,n_Z}(X)} = \frac{1}{n_Z}\mleft(\rho^2\var*{Z} +  \sqrt{\var*{U}\var*{Z}\rho^2\frac{c_U}{c_Z}}\mright) \notag\\
  \intertext{with}
  \label{eq:samples-repartition}
  n_Z = \ceil*{ \frac{1}{\eta^2}\mleft(\rho^2\var*{Z} +  \sqrt{\var*{U}\var*{Z}\rho^2\frac{c_U}{c_Z}}\mright) }
  \text{ and }
  n_U = \ceil*{ n_Z\sqrt{\frac{c_Z\var*{U}}{c_U\var*{Z}\rho^2}} }
\end{gather}
for a variance of%
\footnote{The variance is actually slightly lower, due to the ceil function $\ceil{\cdot}$.}
$\eta^2$.
Like expectations, variances and covariances are estimated empirically. For two random variables $A$ and $B$, with a sample $(\omega_i)_{i\in\Zint{1}{m}}$ of size $m\in\setNsp$, we define
\begin{gather*}
  \varsigma_m(A,B) := \frac{1}{m-1}\sum_{i=1}^m (A(\omega_i) - \theta_m(A))(B(\omega_i) - \theta_m(B)).
\end{gather*}
This is an unbiased estimator, \myie{} $\covar{A,B} = \expec{\varsigma_m(A,B)}$; obviously, $\var{A} = \expec{\varsigma_m(A,A)}$. We set a minimal sample size required to get a reliable estimation of $\var{U}$, $\var{Z}$, $c_U$ and $c_Z$ before estimating $n_Z$ and $n_U$. Likewise, the optimal control coefficient value is estimated from formula~\cref{eq:argmin-var} as  $\varsigma_{n_U}(\rvi{},Z)/\varsigma_{\max(n_U,n_Z)}(Z)$.

A cost-efficient approximation of the homogenised operator, provided by a surrogate model, would be a closely correlated random variable. In a situation where the surrogate model does not provide a satisfying approximation, it remains a suitable control variate $Z$ to a more precise, and costlier, solution $\rvi{}$. We will first present a control variate that has been introduced a few years ago in a similar context of quasi-periodicity by~\textcite{Legoll2015a}, then propose another one based on the tensor-based approximation method.

\subsection{Weakly stochastic approach}
\label{sec:weakly-stoch}

\subsubsection{Defect-type surrogate model}

The various surrogate models of weakly stochastic homogenisation have been set in the theoretical framework outlined in~\cref{sec:stoch-homog,sec:homog-map} and address randomly perturbed periodic media. They all rely on an asymptotic development of corrector functions with respect to a parameter which quantifies the weakness of the perturbation, hence the name. Different surrogate models have been proposed for different perturbation's modelling \parencites(as in)(){Anantharaman2010a}{Blanc2006}{Thomines2012}, and we are interested in one proposed for the specific context of local material defects at the microscopic scale. This \enquote{defect-type} approach, introduced by \textcite{Anantharaman2010b}, is based on the assumptions that every material cell%
\footnote{Period of the reference unperturbed material.}
is either sound or faulty (all defects being identical), that every cell has the same probability $\wikns$ of being faulty and that this probability is small.

\begin{proposition}[Defect-type perturbation]
  \label{prop:defect-type}
  Let $K\in\mathcal{M}(\alpha,\beta)$ be such that
  \begin{gather*}
    K(x,\omega) = K_{\sharp}(x) + K_{def}(x) \sum_{i\in I} \mathds{1}_{D_i}(x) B_\wikns^i(\omega), \quad\forall(x,\omega)\in\setR^d\times\Omega,
  \end{gather*}
  with $K_\sharp$ and $K_{def}$ periodic, and independent random variables $\mleft(B^i_\wikns\mright)_{i\in I}$ following the Bernoulli law of parameter $\wikns$, so that $\mathbb{P}(B^i_\wikns=1)=\wikns$. Then,
  \begin{gather*}
    K_\star =  \lim_{N\rightarrow\infty} K_{\sharp\star} + \wikns N^dK_{1\star}^N + \wikns^2 K_{2\star}^N + o(\wikns^2)	
  \end{gather*}
  where $K_{\sharp\star}$ is the periodic homogenised operator defined in \cref{prop:homog-per},
  \begin{gather*}
    \mleft(K_{1\star}^N\mright)_{ij} := \frac{1}{\abs{D}}\int_{D} (e_i+\grad w_i^{1,N}) \cdot (K_{\sharp}+\mathds{1}_{D_1}K_{def}) \cdot e_j - (K_{\sharp\star})_{ij},
  \end{gather*}
  with the $d$ correctors $w^{1,N}_i\in H^1_\sharp(D)$ solutions of
  \begin{gather*}
    -\diver\mleft((K_{\sharp}+\mathds{1}_{D_1}K_{def})\cdot(e_i+\grad w_i^{1,N})\mright) = 0,
  \end{gather*}
  and
  \begin{gather}
    \label{eq:2def-contribution}
    K_{2\star}^N = \frac{1}{2}\sum_{l\in I}\sum_{m\in I\setminus\{l\}} K_{2\star}^{1+\abs{m-l},N}
  \end{gather}
  where
  \begin{gather*}
    \mleft(K_{2\star}^{k,N}\mright)_{ij} = \frac{1}{\abs{D}}\int_{D} \mleft(e_i+\grad w_{i,k}^{2,N}\mright)\cdot (K_{\sharp}+\mathds{1}_{D_1\cup D_k}K_{def}) \cdot e_j - 2\mleft(K_{1\star}^N\mright)_{ij} + (K_{\sharp\star})_{ij},
  \end{gather*}
  with the $d\times(N^d-1)$ correctors $w_{i,k}^{2,N}\in H^1_\sharp(D)$ solutions of
  \begin{gather*}
    -\diver\mleft((K_{\sharp}+\mathds{1}_{D_1\cup D_k}K_{def})\mleft(e_i+\grad w_{i,k}^{2,N}\mright)\mright) = 0.
  \end{gather*}
\end{proposition}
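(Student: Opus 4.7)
My plan is to fix $N$, compute $\expec*{K_\star^N}$ as a finite sum over defect configurations, expand this sum in powers of $\wikns$ up to order two, identify each coefficient with the quantities announced in the statement, and finally let $N\to\infty$ invoking \cref{prop:homog-sto} (in the form $\lim_{N\to\infty}\expec*{K^N_\star}=K_\star$).

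First I would parameterise a realisation of the Bernoulli field by its defect set $S := \{i\in I : B^i_\wikns=1\}\subset I$, where $I = \Zint{1}{N^d}$ indexes the cells of $Y_N$. By independence, $\mathbb{P}(S) = \wikns^{|S|}(1-\wikns)^{N^d - |S|}$. Denoting $K_\star^N[S]$ the apparent operator computed from \cref{eq:app-corr-sto,eq:app-homog-sto} with defects precisely at the cells of $S$, we obtain the exact identity
\begin{gather*}
  \expec*{K^N_\star} \;=\; \sum_{S\subseteq I} \wikns^{|S|}(1-\wikns)^{N^d-|S|} K^N_\star[S].
\end{gather*}
Expanding $(1-\wikns)^{N^d-|S|} = 1 - (N^d-|S|)\wikns + \binom{N^d-|S|}{2}\wikns^2 + O(\wikns^3)$ and collecting powers of $\wikns$ gives the three contributions at orders $0,1,2$, namely $K^N_\star[\emptyset]$, $-N^d K^N_\star[\emptyset]+\sum_{|S|=1}K^N_\star[S]$, and $\binom{N^d}{2}K^N_\star[\emptyset] - (N^d-1)\sum_{|S|=1} K^N_\star[S] + \sum_{|S|=2}K^N_\star[S]$.

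Next I would identify each of these three sums with the quantities in the statement. Since $K_\sharp$ is $Y$-periodic, the apparent operator computed on $Y_N$ with no defect coincides with the periodic homogenised operator, so $K^N_\star[\emptyset]=K_{\sharp\star}$. By discrete stationarity on the torus $Y_N$, the pair $(K,K^N_\star[S])$ is invariant under the action of $\setZ^d$ on cell indices, hence $K^N_\star[\{i\}]$ is the same for every $i\in I$ and equals $K^N_\star[\{1\}]$, so $\sum_{|S|=1}K^N_\star[S] = N^d K^N_\star[\{1\}]$, which yields the first-order coefficient $N^d(K^N_\star[\{1\}]-K_{\sharp\star}) = N^d K^N_{1\star}$ by definition. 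For the second order I would invoke stationarity again: $K^N_\star[\{l,m\}]$ depends only on $m-l$ modulo the torus, so that $K^N_\star[\{l,m\}] = K^N_\star[\{1,1+|m-l|\}]$ in the notation of \cref{eq:2def-contribution}, and a direct check shows
\begin{gather*}
  \tfrac{1}{2}\sum_{l\in I}\sum_{m\in I\setminus\{l\}}\!\mleft(K^N_\star[\{l,m\}] - 2K^N_\star[\{1\}] + K^N_\star[\emptyset]\mright) \;=\; \sum_{|S|=2}K^N_\star[S] - N^d(N^d-1)K^N_\star[\{1\}] + \tbinom{N^d}{2}K^N_\star[\emptyset],
\end{gather*}
which is precisely the order-$\wikns^2$ coefficient and matches $K^N_{2\star}$. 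This yields, for each fixed $N$, the expansion $\expec*{K^N_\star} = K_{\sharp\star} + \wikns N^d K^N_{1\star} + \wikns^2 K^N_{2\star} + R_N(\wikns)$ with $R_N(\wikns) = O(\wikns^3)$.

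Finally I would pass to the limit $N\to\infty$. The left-hand side converges to $K_\star$ by \cref{prop:homog-sto}; the first three terms on the right converge because the one-defect response decays and $N^d K^N_{1\star}$ stabilises to the single-defect contribution posed on the full space, while $K^N_{2\star}$ converges as a sum over pair separations whose summand decays with the distance between the two defects. The main obstacle is exactly this last point: justifying $\lim_{N\to\infty} R_N(\wikns)=o(\wikns^2)$ as $\wikns\to0$ requires a uniform-in-$N$ bound on $R_N$, which in turn rests on quantitative decay estimates for the Green function of the periodic operator $-\diver(K_\sharp\grad\cdot)$ (ensuring summability of the $k$-defect contributions). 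Given these decay estimates, the limit and the $\wikns$-expansion commute and the announced formula follows.
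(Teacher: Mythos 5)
The paper offers no proof of its own here: the stated \enquote{proof} is a citation to Anantharaman and Le~Bris, and your outline is exactly the strategy of that reference --- write $\expec*{K^N_\star}$ exactly as $\sum_{S\subseteq I}\wikns^{|S|}(1-\wikns)^{N^d-|S|}K^N_\star[S]$, expand in powers of $\wikns$, use translation invariance on the periodic torus $Y_N$ to collapse the one- and two-defect sums, and then send $N\to\infty$. Your fixed-$N$ combinatorics are correct, and the identification of the order-zero and order-one coefficients with $K_{\sharp\star}$ and $N^dK^N_{1\star}$ is right. One bookkeeping caveat: with the proposition's literal definitions, $K_{2\star}^{k,N}=K^N_\star[\{1,k\}]-2K^N_{1\star}+K_{\sharp\star}$ and $K^N_{1\star}=K^N_\star[\{1\}]-K_{\sharp\star}$ give $K_{2\star}^{k,N}=K^N_\star[\{1,k\}]-2K^N_\star[\{1\}]+3K_{\sharp\star}$, whereas your \enquote{direct check} needs the combination $K^N_\star[\{1,k\}]-2K^N_\star[\{1\}]+K^N_\star[\emptyset]$; the latter is the one that makes the expansion close (and is what the source uses), so either the $-2K^N_{1\star}$ must be read as subtracting twice the full one-defect apparent operator, or the constant must be adjusted --- this should be stated rather than asserted.

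The more serious issue is that the entire analytic content of the result sits in the step you defer. The statement claims an expansion of $K_\star=\lim_{N}\expec*{K^N_\star}$ with a remainder that is $o(\wikns^2)$ after the limit, which requires (i) a bound on the aggregated $|S|\geqslant 3$ contributions that does not degenerate with the $O(N^{3d})$ count of defect triples, and (ii) convergence as $N\to\infty$ of $N^dK^N_{1\star}$ and of the pair sum defining $K^N_{2\star}$; both rest on quantitative decay of the difference between the defected and periodic correctors away from the defect, i.e.\ Green-function estimates for $-\diver(K_\sharp\grad\cdot)$. Naming this obstacle is not overcoming it: as written, your argument establishes a formal expansion at each fixed $N$ and leaves the interchange of $\lim_{N\to\infty}$ with the $\wikns\to0$ asymptotics --- the part that occupies most of the cited reference --- unproved. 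So this is a faithful reconstruction of the right approach with a genuine gap at its analytic core.
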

\begin{proof}
  See~\textcite{Anantharaman2011b}.
\end{proof}

In \cref{prop:defect-type}, sound cells have diffusion coefficient $K_\sharp$ and faulty cells have $K_{\sharp} + K_{def}$. As an example, the conductivity expressed in~\cref{eq:K-bernoulli}, involved in most tests of \cref{sec:lrm-tests}, falls in this defect-type category. As in \cref{sec:lrm-tests}, we still consider isotropic mesoscopic grids of $N$ cells along each direction so $\#I = N^d$, and set as reference cell $Y:= D_1$, the \enquote{first} cell.

The terms $K_{1\star}^N$ and $K_{2\star}^{k,N}$ are referred to as 1-defect and 2-defects individual contributions, respectively. They are the homogenised operators associated with the periodic repetition of domain $D$ with, respectively, one and two defects. Due to periodicity, only the relative localisation of one defect with respect to the other affects the apparent homogenised operator; with only one defect, its position does not matter. Therefore, the total 1-defect contribution is $N^dK_{1\star}^N$ whereas the total 2-defects contribution is defined by equation~\cref{eq:2def-contribution}, where we chose arbitrarily to place the first defect in $D_1$. The \enquote{0-defect} contribution is, obviously, $K_{\sharp\star}$.

\subsubsection{Variance reduction based on defect-type model}
\label{sec:var-red-weakly}

Weakly stochastic homogenisation provides a surrogate model whose accuracy is related to perturbation weakness. When the perturbation is not weak enough for that model to provide the desired accuracy, its approximation remains supposedly highly correlated to the estimated homogenised operator $K_\star^N$. Therefore, it can be used as control variate in the Monte Carlo estimation $\theta_m(K_\star^N)$ (from equation~\cref{eq:mc-estimator}), as detailed in the introduction of \cref{sec:var-reduction}, where $K_\star^N$ is evaluated by FEM. This variance reduction scheme has been published by~\textcite[in][]{Legoll2015a} for the defect-type approach from \cref{prop:defect-type}. We will outline the method here and refer the reader to this publication for a detailed explanation and justification of what follows.

\Cref{prop:defect-type} offers a second-order approximation from which can be deduced either a first- or second-order controlled variable, depending on whether the second-order term is accounted for. This decision is a compromise between accuracy and cost-efficiency, which will be discussed in \cref{sec:variance-exp}. The first-order controlled variable is defined as
\begin{gather*}
  X_1 = K_\star^N - \rho_1(Z_1-\expec*{Z_1}) \\
  \intertext{with}
  Z_1 = K_{\sharp\star} + \sum_{i\in I} B_i^\wikns K_{1\star}^N, \quad 
  \text{and}\quad
  \expec*{Z_1} = K_{\sharp\star} + \wikns N^d K_{1\star}^N, \\
  \intertext{therefore, our first-order controlled variable reads}
  X_1 = K_\star^N -\rho_1\mleft( \sum_{i\in I}B_i^\wikns- \wikns N^d \mright) K_{1\star}^N \\
  \intertext{and, from~\cref{eq:argmin-var}, the optimal control coefficient value is}
  \rho_1 = \frac{\covar*{K_\star^N,Z_1}}{\var*{Z_1}}.
\end{gather*}
We used here the same notations as in \cref{prop:defect-type}.
Henceforth we will note $c_N:=\cost\mleft(K^N_\star\mright)$ and, accordingly, $c_1:=\cost(K_{\sharp*})$. The computational cost related to control variate $Z_1$ can be broken down into an a priori cost $c_{p1}$ and the cost of evaluating a new realisation of $Z_1$. The latter is deemed negligible%
\footnote{Compared to, \myeg{} $c_1$.}
while former amounts to the computation of $K_{\sharp\star}$ and $K_{1\star}^N$, hence $c_{p1}:=c_{FE}+c_1$.

The second-order controlled variable is expressed as
\begin{gather*}
  X_2 = K_\star^N - \rho_1(Z_1-\expec*{Z_1}) - \rho_2(Z_2-\expec*{Z_2}) - \check{\rho}_2\mleft(\check{Z}_2-\expec*{\check{Z}_2}\mright) \\
  \intertext{with}
  Z_2 = \frac{1}{2} \sum_{i\in I} \sum_{j\in I\setminus\{i\}}B_i^\wikns B_j^\wikns K_{2\star}^{1+\abs{j-i},N}  \text{ so }
    \expec*{Z_2} = \frac{\wikns^2}{2} \sum_{i\in I} \sum_{j\in I\setminus\{i\}} K_{2\star}^{1+\abs{j-i},N},\\
  \intertext{and}
  \check{Z}_2 = \frac{1}{2} \sum_{i\in I} \sum_{j\in I\setminus\{i\}}(1-B_i^\wikns)(1-B_j^\wikns), \check{K}_{2\star}^{1+\abs{j-i},N}  
  \text{ so }
  \expec*{\check{Z}_2} = \frac{(1-\wikns)^2}{2} \sum_{i\in I} \sum_{j\in I\setminus\{i\}} \check{K}_{2\star}^{1+\abs{j-i},N}.
\end{gather*}
We introduced above the quantity $\check{K}_{2\star}^{k,N}$ for $k\in I$. It is the counterpart of $K_{2\star}^{k,N}$ for the \enquote{complementary} material, \myie{} the material whose reference conductivity is $K_\sharp+K_{def}$ and whose faulty cells have conductivity $K_\sharp$. Consequently, the associated 2-defects individual contributions are
\begin{gather*}
  \mleft(\check{K}_{2\star}^{k,N}\mright)_{ij} := \frac{1}{\abs{D}}\int_{D} (e_i+\grad \check{w}_{i,k}^{2,N})\cdot (K_{\sharp}+K_{def}-\mathds{1}_{D_1\cup D_k}K_{def}) \cdot e_j - 2\check{K}_{1\star}^N + \check{K}_{\sharp\star} \\
  \text{with}\quad
  -\diver\mleft((K_{\sharp}+K_{def}-\mathds{1}_{D_1\cup D_k}K_{def})(e_i+\grad \check{w}_{i,k}^{2,N})\mright) = 0, \\
  \intertext{and the 1-defect individual contribution is}
  \mleft(\check{K}_{1\star}^N\mright)_{ij} := \frac{1}{\abs{D}}\int_{D} (e_i+\grad \check{w}_i^{1,N}) \cdot (K_{\sharp}+K_{def} -\mathds{1}_{D_1}K_{def}) \cdot e_j - (\check{K}_{\sharp\star})_{ij} \\
  \text{with}\quad
  -\diver\mleft((K_{\sharp}+K_{def} -\mathds{1}_{D_1}K_{def})\cdot(e_i+\grad \check{w}_i^{1,N})\mright) = 0, \\
  \intertext{and, finally, the periodically homogenised operator is}
  (\check{K}_{\sharp\star})_{ij} := \frac{1}{\abs{Y}}\int_Y (e_i+\grad \check{w}_i^\sharp)\cdot (K_\sharp+K_{def}) e_j , \\
  \text{with}\quad
  -\diver\mleft((K_\sharp+K_{def})(e_i+\grad \check{w}_i^\sharp)\mright) = 0.
\end{gather*}
This additional contribution was ignored in \cref{prop:defect-type} because we assumed $\wikns\ll 1$. There is no such assumption here and therefore no precedence of one reference over the other. This does not apply at the first order, for which considering one or the other is equivalent. Indeed, the first-order control variate accounts only for the proportion of faulty cells without geometric consideration.

As for the control coefficients $\rho_1$, $\rho_2$ and $\check{\rho}_2$, their optimal values are solutions to
\begin{gather}
  \label{eq:control-coef-2}
  \begin{pmatrix}
    \var{Z_1}             & \covar{Z_1,Z_2}       & \covar{Z_1,\check{Z}_2} \\
    \covar{Z_2,Z_1}       & \var{Z_2}             & \covar{Z_2,\check{Z}_2} \\
    \covar{\check{Z}_2,Z_1} & \covar{\check{Z}_2,Z_2} & \var{\check{Z}_2}       
  \end{pmatrix}  
  \begin{pmatrix}
    \rho_1 \\ \rho_2 \\ \check{\rho}_2
  \end{pmatrix}
  =
  \begin{pmatrix}
    \covar{K_\star^N,Z_1} \\ \covar{K_\star^N,Z_2} \\ \covar{K_\star^N,\check{Z}_2} 
  \end{pmatrix}.
\end{gather}

The a priori cost $c_{p2}$ of this second-order control variate is significantly higher than the first-order one: $c_{p2}=2(C_1+(N^d-1)C_{FE})$. The subsequent cost of every realisation is admittedly higher as well, mainly due to the resolution of~\cref{eq:control-coef-2}, yet we still deem it negligible.

Obviously, the cost of these a priori computations could be greatly reduced by MsLRM. Even more so because modes recycling would be most efficient in this case.

\subsection{Low-rank approximation as control variate}
\label{sec:eim}

Results from \cref{sec:lrm-tests} hinted at the MsLRM limits, \myie{} situations where a more general and simple direct resolution (\myeg{} FEM) would be more efficient. These situations occur mostly when the rank required to achieve the desired precision is too high, because either the tolerance is too low or the quasi-periodicity too weak. To address these situations, we wish to propose a MsLRM-based low-fidelity model as a control variate to $K_\star^N$ evaluated by a high-fidelity model---FEM here. 
First, we observed that the rank of $K$ had a significant effect on the approximation's rank. To address cases where $K$ may not be quasi-periodic, one way to provide a cost-efficient approximation would be to use a low-rank approximation of $K$. However, to guarantee the approximation's quality we must still be able to control the precision of this perturbed MsLRM.

Let $\varepsilon\in\setRsp$, we note $U^\varepsilon_i$ the function from $\mathcal{M}(\alpha,\beta)$ to $V_h(D)$ which maps $K$ to the MsLRM approximation $w^\varepsilon_{N,i}$ of the apparent corrector solution to~\cref{eq:app-corr-sto} (for a chosen $i\in\Zint{1}{d}$). We choose $\delta\in\setRsp$ and let $\qpK{}\in L^\infty(D)$ be such that $\norm{\qpK{}-K}_{L^\infty(D)}\leqslant \delta$ and $\rank{\qpK{}}$ is low. Then, assuming $U^\varepsilon_i\in C^0(L^\infty(D);V_h(D))$, we have $\norm{U^\varepsilon_i(\qpK{})-U^\varepsilon_i(K)} \leqslant C_U \delta$ where $C_U$ denotes the Lipschitz constant of $U^\varepsilon_i$. Consequently, we can control the error caused on the MsLRM approximation by the approximation on $K$, provided that we can control $\delta$.

Then, let us note $Z_{\delta,\varepsilon}$ the function from $L^2(\Omega;\mathcal{M}(\alpha,\beta))$ to 
$L^2(\Omega;L^\infty(D)^{d\times d})$ which maps $K$ to the apparent homogenised operator associated to $\qpK{}$ with MsLRM approximation to the precision $\varepsilon$ of the correctors, \myie{}
\begin{gather}
  \label{eq:app-homog_control-variate}
  \mleft(Z_{\delta,\varepsilon} (K)\mright)_{ij} := \frac{1}{\abs{D}} \int_D e_i \cdot\qpK{} (e_j+\grad U^\varepsilon_j(\qpK{})).
\end{gather}
We hope to provide a cost-efficient, highly correlated control variate $Z_{\delta,\varepsilon}(K)$ to $K_\star^N$. More precisely, we hope to achieve cost-efficiency for aperiodic conductivities $K$ with large enough $\delta$ and $\varepsilon$ while remaining highly correlated to $K_\star^N$. We note the controlled variable
\begin{gather*}
  X_{\delta,\varepsilon} := K_\star^N + \rho(Z_{\delta,\varepsilon}-\expec*{Z_{\delta,\varepsilon}}),
\end{gather*}
where control coefficient $\rho$ is computed according to expression~\cref{eq:argmin-var}.
The process that yield $X_{\delta,\varepsilon}$ from $K^N$ is summarised on \cref{fig:control-variate_diagram}.

\begin{figure}[hbt]
  \centering

\colorlet{highfid}{femFG}
\colorlet{highfidBG}{femBG}
\colorlet{lowfid}{mslrmFG}
\colorlet{lowfidBG}{mslrmBG}
\hypersetup{hidelinks}

\begin{tikzpicture}[node distance=11em,decoration={snake,pre length=.5ex,post length=.5ex,amplitude=2pt},every node/.style={rounded corners,opacity=.1,text opacity=1}]
  \node (cond) {$K^N(\omega)$};
  \node[right of=cond,fill=highfidBG] (corrRef) {$w_{N,k}(\omega)$};
  \node[right of=corrRef,fill=highfidBG] (homRef) {$K^N_{*}(\omega)$};
  \draw[->,highfid] (cond.east) -- node[above] {\scshape FEM} (corrRef.west);
  \draw[->,highfid] (corrRef.east) -- node[above] {app. hom.} node[below] {\eqref{eq:app-homog-sto}} (homRef.west);
  \node[below of=cond,node distance=4em,fill=lowfidBG] (condApprox) {$\tilde{K}^N(\omega)$};
  \node[right of=condApprox,fill=lowfidBG] (corrApprox) {$U^\varepsilon_i\left(\tilde{K}^N(\omega)\right)$};
  \node[right of=corrApprox,fill=lowfidBG] (homApprox) {$Z_{\delta,\varepsilon}(\omega)$};
  \draw[->,decorate,lowfid] (cond.south) -- node[left] {\scshape EIM} node[right] {$\delta$} (condApprox.north);
  \draw[->,lowfid,decorate] (condApprox.east) -- node[above] {\textsc{M}\textsmaller{s}\textsc{LRM}} node[below] {$\varepsilon$} (corrApprox.west);
  \draw[->,lowfid] (corrApprox.east) -- node[above] {app. hom.} node[below] {\eqref{eq:app-homog_control-variate}} (homApprox.west);
  \node[right of=homRef,opacity=1,draw=TolLightGreen] (contrVar) {$X_{\delta,\varepsilon}(\omega)$};
  \node[right of=homApprox,fill=lowfidBG] (expecCV) {$\mathds{E}\left(Z_{\delta,\varepsilon}(\omega)\right)$};
  \draw[->,highfid] (homRef) -- (contrVar) ;
  \draw[->,lowfid] (homApprox) -- (contrVar) ;
  \draw[->,lowfid] (expecCV) -- (contrVar) ;
\end{tikzpicture}


  \caption[Combination of high- and low-fidelity models into controlled variable]{Combination of \textcolor{highfid}{high-} and \textcolor{lowfid}{low-}fidelity models into controlled variable $X_{\delta,\varepsilon}$}
  \label{fig:control-variate_diagram}
\end{figure}
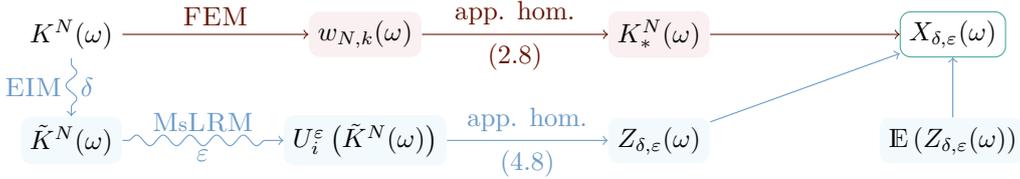

The low-rank approximation $\qpK{}$ is built as an empirical interpolation of $K$ in the form $\mathcal{I}_n[K]:=\sum_{j=1}^n \alpha_j^n\otimes q_j$ by greedy \cref{algo:eim}.
At every step $n\in\setNsp$, the interpolation points $(i_n,y_n)$ are defined to be where the current interpolation is worse.
Then $q_n$ is chosen colinear to $K(i_n,\cdot)-\mathcal{I}_{n-1}[K](i_n,\cdot)$ and such that $q_n(y_n)=1$.
Since the matrix $(q_j(y_p))_{(j,p)\in\Zint{1}{n}^2}$ is invertible%
\footnote{It is actually lower triangular, with unity diagonal~\parencite[see][th. 2.2 p. 387]{Maday2009a}.},
we can choose $\alpha_n$ such that $\forall (i,p)\in I\times\Zint{1}{n},\; \mathcal{I}_n[K](i,y_p)=K(i,y_p)$.
Once $\norm*{K-\mathcal{I}_n[K]}_{L^\infty(D)}\leqslant \delta$, we stop and keep approximation $\qpK{}:=\mathcal{I}_n[K]$.
The a priori error estimation
$\max_{i\in I} \norm*{K(i,\cdot)-\mathcal{I}_n[K](i,\cdot)}_{L^\infty(Y)} \leqslant (1+\Lambda_n) \inf_{\kappa\in\mathcal{K}_n}\norm*{K-\kappa}_{L^\infty(Y)}$
holds, with $\mathcal{K}_n:=\vspan\mleft\{\{K(i_p,\cdot), p\in\Zint{1}{n}\}\mright\}$ and the Lebesgue constant $\Lambda_n := \sup_{y\in Y}\sum_{j=1}^n \abs*{h_j^n(y)}\leqslant 2^n-1$, where $h_j^n\in\mathcal{K}_n$ is such that $\forall p\in\Zint{1}{n}, h_j^n(y_p)=\delta_{jp}$.
From the quasi-periodicity assumption on $K$, we expect $\inf_{\kappa\in\mathcal{K}_n}\norm*{K-\kappa}_{L^\infty(Y)}$ to decrease rapidly with $n$ and thus believe that \cref{algo:eim} can provide accurate and low-rank approximations in our case.
For a detailed explanation of the Empirical Interpolation Method (EIM), we refer the reader to~\cites{Barrault2004a}{Maday2009a}.

\begin{algorithm}
  \DontPrintSemicolon
  Initialise $\mathcal{I}_0[K] := 0$, $R_0:=K$ and $n:=1$\; 
  \While{$\sup_{i\in I}\norm*{R_n(i,\cdot)}_{L^\infty(Y)}>\delta$}{
    Define $R_{n-1}:= K - \mathcal{I}_{n-1}[K]$\;
    Compute points $i_n := \argmax_{i\in I}\norm*{R_{n-1}}_{L^\infty(Y)}$ and $y_n := \argmax_{y\in\overline{Y}}\abs*{R_{n-1}(i,y)}$\;
    Set $\displaystyle q_n:= \frac{R_{n-1}(i_n,\cdot)}{R_{n-1}(i_n,y_n)}$\;
    Compute $\alpha^n$ by solving $\sum_{j=1}^{n} \alpha_j^{n}(i)q_j(y_p) = K(i,y_p),\; \forall (i,p)\in I\times\Zint{1}{n}$\;
    Update $\mathcal{I}_n[K] := \sum_{j=1}^n\alpha_j^n\otimes q_j$, $R_{n}:= K - \mathcal{I}_{n}[K]$ and $n:=n+1$\;
  }
  \KwRet{$\qpK{} := \mathcal{I}_n[K]$}
  \caption{Empirical interpolation of $K$}
  \label{algo:eim}
\end{algorithm}

\begin{remark}[Multi-level control variates]
  The benefit of a control variate to an estimator is measured by the error reduction achieved for a given computational budget or, conversely, by the cost reduction achieved for a given precision.
  Therefore a control variate quality can be assessed, with formula~\cref{eq:min-var}, from its correlation with the variable of interest and its cost: the former should be high and the latter low.
  Consequently, the quality of $Z_{\delta,\varepsilon}$ hinges on the choice of the couple of tolerances $(\delta,\varepsilon)$.
  A strategy could be devised to choose good values based on measures of cost and correlation on reasonably small samples.

  Another approach to this issue would be to combine several such control variates into a variant of estimator $\theta_{n_U,n_Z}(X)$, defined by formula~\cref{eq:reduced-estimator}.
  Thus, for a number $n\in\setNsp$ of control variates, we would choose $(\delta,\varepsilon)\in\setR^n\times\setR^n$ then observe that
  \begin{align}
    X & = \rvi{} - \rho_1Z_{\delta_1,\varepsilon_1} + \rho_1\expec*{Z_{\delta_1,\varepsilon_1}} \notag\\
    \label{eq:mfcv-telescopic}
      & = \rvi{} - \rho_nZ_{\epsilon_n,\delta_n} + \rho_nZ_{\epsilon_n,\delta_n} - \rho_{n-1}Z_{\epsilon_{n-1},\delta_{n-1}} \ldots + \rho_2Z_{\epsilon_2,\delta_2} - \rho_1Z_{\epsilon_1,\delta_1} + \rho_1\expec*{Z_{\epsilon_1,\delta_1}}.
  \end{align}
  Several multi-level estimators have been based on this, such as the famous multi-level Monte Carlo methods detailed in the monograph~\cite{Heinrich2001a}.
  These have been applied successfully by~\textcites{Giles2008a}{Cliffe2011a} to stochastic partial differential equations, with demonstrated complexity reduction.
  An optimal sampling strategy could then be formulated via a similar reasoning as for the two-level case, which lead to formul\ae~\cref{eq:samples-repartition}, albeit not as straightforward.
  
  Alternatively, \textcite{Peherstorfer2016a} proposed multi-fidelity Monte Carlo estimators, which stem from the same idea.
  For a sample $\omega\in\Omega^M$ of size $M\in\setNsp$ and $n\in\setNsp$ low-fidelity models (i.e. control variates, here), such an estimator could be
  \begin{gather*}
    \frac{1}{m_0}\sum_{k=1}\rvi{}(\omega_k) +  \sum_{j=1}^{n}\mleft(\frac{1}{m_j}\sum_{k=1}^{m_j} Z_{\delta_j,\varepsilon_j}(\omega_k) - \frac{1}{m_{j-1}}\sum_{k=1}^{m_{j-1}} Z_{\delta_j,\varepsilon_j}(\omega_k)\mright)
  \end{gather*}
  where $\forall j\in\Zint{0}{M}$, $0 < m_j \leqslant M$.
  For the sake of brevity, we will note the high-fidelity model $\zeta_0:=\rvi{}$, and   $\zeta_j=Z_{\delta_j,\varepsilon_j}$ any low-fidelity model $j\in\Zint{1}{n}$; furthermore, $\forall j \in\Zint{0}{n}$, $\gamma_j := \cost(\zeta_j)$ and $r_j = \corr{\zeta_0,\zeta_j}$.
  Let us also introduce control coefficients $\rho\in\setR^n$ as we did at the beginning of \cref{sec:var-reduction}, \cpageref{eq:argmin-var}.
  We then define the multi-fidelity estimator
  \begin{gather}
    \label{eq:mfcv-def}
    \vartheta_m(X) := \frac{1}{m_0}\sum_{k=1}\zeta_0(\omega_k) +  \sum_{j=1}^{n}\rho_j\mleft(\frac{1}{m_j}\sum_{k=1}^{m_j} \zeta_j(\omega_k) - \frac{1}{m_{j-1}}\sum_{k=1}^{m_{j-1}} \zeta_j(\omega_k)\mright).
  \end{gather}
  
  Interestingly, there are results on error minimisation of such multi-fidelity estimators for a fixed budget.
  Owing to the telescopic sum%
  \footnote{And the fact that, $\forall j\in\Zint{0}{m}, m_j>0$.}
  in~\cref{eq:mfcv-telescopic} and~\cref{eq:mfcv-def}, $\vartheta_m(X)$ is an unbiased estimator of $\expec{X}$.
  Consequently, its mean squared error is
  \begin{gather*}
    \var{\vartheta_m(X)} = \frac{\var{\zeta_0}}{m_0} + \sum_{j=1}^n \mleft(\frac{1}{m_{j-1}}-\frac{1}{m_j}\mright) (\rho_j^2\var{\zeta_j}-2\rho_j\covar*{\zeta_0,\zeta_j}),
  \end{gather*}
  which we note $E(m,\rho)$, and $\cost(\vartheta_m(X)) = \sum_{j=0}^nm_j\gamma_j$.

  Given $(\delta,\varepsilon)$, estimator $\vartheta_m(X)$ is uniquely defined by the choice of sample sizes $m\in\setNsp^{n+1}$ and control coefficients $\rho$.
  For a given computational budget $B\in\setRsp$, an optimal choice with respect to the estimation error would be solution to
  \begin{gather}
    \label{eq:mfcv-optimal}
    \min_{m\in\setRsp^{n+1},\; \rho\in\setR^n} \mleft\{ E(m,\rho) : \cost(\vartheta_m(X))= B \mright\}.
  \end{gather}
  Let us make the following assumptions%
  \footnote{\label{fn:corr-conv}With the convention $r_{n+1}:=0$.},
  $\forall j \in \Zint{1}{n-1}$:
  \begin{align}
    m_j \geqslant m_{j+1} \; ; \notag\\
    \label{eq:hyp-mfcv-optimal_corr}
    \abs{r_j} > \abs{r_{j+1}} \; ;\\
    \label{eq:hyp-mfcv-optimal_cost-corr}
    \frac{\gamma_{i}}{\gamma_{i+1}} > \frac{r_i^2 - r_{i+1}^2}{r_{i+1}^2 - r_{i+2}^2}.
  \end{align}
  Under these conditions, \textcite[see][th. 3.4]{Peherstorfer2016a} proved that problem~\cref{eq:mfcv-optimal} has a unique solution $(\breve{m},\breve{\rho})$ expressed as
  \begin{gather*}
    \breve{m}_0 = B \mleft(\sum_{j=0}^n \sqrt{\gamma_j\gamma_0\frac{r_j^2 - r_{j+1}^2}{1-r_1^2}}  \mright)^{-1}, \\
    \forall j\in\Zint{1}{n},\quad \breve{m}_j = \breve{m}_0 \sqrt{\frac{\gamma_0}{\gamma_j}\frac{r_j^2 - r_{j+1}^2}{1-r_1^2}} \quad
    \text{and}\quad \breve{\rho}_j = \frac{\covar*{\rvi{},Z_j}}{\var{Z_j}}.
  \end{gather*}
  Then the mean squared error (i.e. variance, here) reduction achieved is\cref{fn:corr-conv}
  \begin{gather}
    \label{eq:mfmc_variance-reduction}
    \frac{E(\breve{m},\breve{\rho})p}{\var{\rvi{}}} = \mleft(\sum_{j=0}^{n} \sqrt{\frac{\gamma_j}{\gamma_0} (r_j^2 - r_{j+1}^2)} \mright)^2,    
  \end{gather}
  where $p:=B/\cost(\rvi{})$. If $p\in\setNsp$ and $\breve{m}\in\setNsp^{n+1}$ (see below), the left-hand side of~\cref{eq:mfmc_variance-reduction} is $\var{\theta_{\breve{m}}(X)}/\var{\theta_p(\rvi{})}$ and $\cost(\theta_{p}(\rvi{})) = \cost(\theta_{\breve{m}}(X)) = B$.
  
  In conditions~\cref{eq:hyp-mfcv-optimal_corr,eq:hyp-mfcv-optimal_cost-corr}, we see that only cost and correlation are involved; particularly, no assumption is made as to the pointwise accuracy of the approximation yielding the control variates.
  Moreover, the error reduction expressed in equation~\cref{eq:mfmc_variance-reduction} shows that the contribution of one control variate depends on its correlation to the others.
  A strategy could be developed to choose $(\delta,\varepsilon)\in\setRsp^{n}\times\setRsp^{n}$ so as to satisfy conditions~\cref{eq:hyp-mfcv-optimal_corr,eq:hyp-mfcv-optimal_cost-corr}, in which case we would know the optimal choice $(\breve{m},\breve{\rho})$.
  
  It should be noted that minimisation problem~\cref{eq:mfcv-optimal} is posed over $\setRsp^{n+1}\times\setR^n$; consequently, $\breve{m}$ will have to be projected onto $\setNsp^{n+1}$.
  For a given computational budget not to be exceeded, we would use the lower integers $\floor*{\breve{m}}$, whereas we would take $\ceil*{\breve{m}}$ in order to satisfy a given tolerance $t$ on the mean squared error.
  In the latter case, \textcite[see][th. 1]{Peherstorfer2018b} has proven, under some assumptions, that $\cost(\vartheta_m(X)) \in \mathcal{O}(t^{-1})$; a convergence rate which, according to~\textcite{Cliffe2011a}, is also achieved by multi-level Monte Carlo methods.

  Finally, \textcite{Peherstorfer2017a} proposes an adaptive approach to account for the non-negligible creation cost of a control variate, whereas the aforementioned methods only consider the sampling cost.
  Its purpose is to balance the overall budget between construction and evaluation of a single surrogate model, so that the model design is cost-oriented as well.
  This too could be adapted to the method proposed here, although there is not so much a \enquote{construction-evaluation}%
  \footnote{Sometimes referred to as \enquote{offline-online}.}
  cost separation as an \enquote{initialisation-asymptotic} one; the former refers to the library construction cost and the latter to the evaluation cost once the library has reached practical completion.
\end{remark}

\subsection{Numerical results}
\label{sec:variance-exp}

We consider bidimensional problems of type~\cref{eq:diff-strong} and choose three different random stationary conductivities $K$. The first one is an ideal quasi-periodic case previously introduced in \cref{sec:lrm-tests}, with a conductivity of rank at most $2$; it falls within the scope of the control variate technique detailed in \cref{sec:var-red-weakly} and allows us to compare it to the one from \cref{sec:eim}. Both other conductivities where chosen to assess the limits of this latter technique and have almost surely full rank%
\footnote{\myIe{} $r_K=N^2$.}.
The first one still follows a regular mesoscopic grid whereas the last one exhibits no such structure.

For each test case we wish to estimate the total cost reduction from computing $\theta^\eta(K^N_\star)$ to computing $\theta^\eta(X)$ for different controlled versions $X$ of $K^N_\star$ and always for the same standard deviation $\eta:=\num{0.1}$. Those total costs are respectively $C_{FE}=\hat{c}_n\var{K^N_\star}/\eta^2$, and $C_X=(\hat{c}_n+c_Z)n_U + c_Zn_Z$ where $\hat{c}_n$ and $c_Z$ are the costs of a single estimation of $K^N_\star$ and of the control variate%
\footnote{Either $Z_1$, $Z_2$ or some $Z_{\delta,\varepsilon}$, depending on the chosen surrogate model.}
$Z$ involved in $X$, respectively; $n_U$ and $n_Z$ are calculated according to formul\ae~\cref{eq:samples-repartition}. Single costs, variances and covariances are estimated from \num{100} samples of $K^N_\star$, $X$ and $Z$. $X_{\delta,\varepsilon}$ is tested with five values of $(\delta,\varepsilon)$ on each test case, and $X_1$ and $X_2$ are tested on the first test case.

All tests are set on a domain $D:=[0,20]^2\equiv\Zint{1}{400}\times[0,1]^2$. Control variates are evaluated by MsLRM with $\mathrm{dim}(V_h(Y)):=441$ and EIM cost is accounted for, albeit lower than \SI{0.1}{\second} per interpolation. $K^N_\star$ is computed by FEM with the same mesh as MsLRM, thus \num{160801} degrees of freedom%
\footnote{Slightly less than $400\times441=\num{176400}$, due to continuity.};
its associated cost $\hat{c}_n\approx\SI{246}{\second}$ does not vary significantly from one test case to another.

These computations were run on the same hardware as described at the beginning of \cref{sec:hardware}, \cpageref{sec:hardware}.

\subsubsection{Quasi-periodic inclusions}
\label{sec:qp-inclusions}

The first case is defined in equation~\cref{eq:K-bernoulli} and illustrated on \cref{fig:bi-periodic-K}; we set defect probability $p:=0.5$ for greatest periodicity loss. \Cref{tab:var-red-1} displays the cost reductions achieved with various control variates, along with values on which this cost reduction depends. This table has been designed so that greater values mean better performance, except for the average ranks $r_w$ and $r_{\qpK{}}$, respectively for correctors and empirically interpolated conductivities. The single cost reduction $\hat{c}_n/c_Z$ was irrelevant for $Z_1$ and $Z_2$ whose evaluation cost is almost zero, so we display instead $\hat{c}_n/c_{p}$ between parentheses, where $c_{p}$ is the a priori cost discussed in \cref{sec:var-red-weakly} (either $c_{p1}$ or $c_{p2}$, respectively); those computations were performed by MsLRM with tolerance $\varepsilon:=\num{0.01}$. The other values between parentheses are the optimal variance reductions, estimated from formula~\cref{eq:min-var}; the difference between those and the observed reductions comes from the uncertainties in empirical estimations of variances and covariances.

\begin{table}
  \centering
  \caption{Cost and variance reductions on quasi-periodic inclusions --- $\var{K^N_\star}=\num{1.8}$}
  \begin{tabular}{S[table-format=1.2]
    S[table-format=2.2]
    S[table-format=2.1]
    S[table-format=1]
    S[table-format=3.2]
    S[table-format=5]
    rS[table-format=4]
    S[table-format=3.1]}\toprule
    {$\delta$} & {$\varepsilon$} & {$r_{\qpK{}}$} & {$r_w$} & {$\hat{c}_n/c_Z$} & \multicolumn{2}{c}{$\var{K^N_\star}/\var{X}$} & {$n_Z/n_U$} & {$C_{FE}/C_X$} \\\midrule
    {--$Z_1$--} & 0.01 & {--} & 1 & \multicolumn{1}{c}{(\tablenum[table-format=3.2]{59})} & 10747 & (\num{10747}) & 3337 & 180 \\
    {--$Z_2$--} & 0.01 & {--} & 11 & \multicolumn{1}{c}{(\tablenum[table-format=3.2]{0.11})} & 15673 &  & 461 & 178 \\
    1 & 1 & 1 & 1 & 165 & 10751 & (\num{11335}) & 209 & 79.3\\
    1 & 0.1 & 1 & 1 & 164 & 10751 & (\num{11335}) & 209 & 79.1\\
    1 & 0.01 & 1 & 7.2 & 134 & 2054 & (\num{2392}) & 208 & 70.5\\
    0.1 & 1 & 2 & 1 & 140 & 15327 & (\num{15503}) & 208 & 72.3\\
    0.01 & 1 & 2 & 1 & 140 & 15327 & (\num{15503}) & 208 & 72.3\\
    \bottomrule
  \end{tabular}
  \label{tab:var-red-1}
\end{table}

The first observation is that all control variates achieve comparable variance reduction, and therefore the major difference is in their cost. Hence $Z_1$ and $Z_2$ are the most efficient by far and the cost reduction displayed is only limited by the fact that we always do at least one computation of $K^N_\star$ (\myie{} $n_U\geqslant1$). Here the variance reduction is such that every control variate achieved $n_U=1$, and $Z_1$ and $Z_2$ reduced the total cost to that single computation: $C_X\approx \hat{c}_n$. This ignores the a priori cost which, although low for $Z_1$, is much greater for $Z_2$ and makes it questionable whether the additional variance reduction is worth the extra cost%
\footnote{Ignoring the influence of the complementary reference periodic material would half this extra cost but reduce correlation.}.
These a priori costs were significantly reduced by using MsLRM instead of FEM: the values of $\hat{c}_n/59$ and $9\hat{c}_n$ would have been, respectfully, $2\hat{c}_n$ and $2(N^2+1)\hat{c}_n=802\hat{c}_n$ with FEM.

As for $Z_{\delta,\varepsilon}$, $\varepsilon$ has little effect (except a cost increase) but $\delta$ makes the difference between a rank-1 approximation $\qpK{}$, which yields the same variance reduction as the first-order control variate $Z_1$, and the exact rank-2 conductivity from which we get the same variance reduction as with the second-order control variate $Z_2$. There is hardly any difference in cost between the cases where $r_{\qpK{}}=1$ and those where $r_{\qpK{}}=2$, $c_Z$ varies only from \SIrange{1.5}{2}{\second}. Because every $Z_{\delta,\varepsilon}$ reaches the optimal situation $n_U=1$ here, the differences in total cost reduction $C_{FE}/C_X$ is not significant and one should rather gauge performance from the variance reduction $\var{K^N_\star}/\var{X}$.

\subsubsection{Regular peaks}
\label{sec:regular-peaks}

The second conductivity tested exhibits a mesoscopic structure following a regular grid, yet it is aperiodic in the sense that its rank almost surely equals the number of cells. Its expression is
\begin{gather*}
  K(i,y,\omega) = 1 + \alpha_i(\omega) e^{-\norm*{\beta_i(\omega)(y-\gamma)}}  
\end{gather*}
where $\alpha_i\in \mathcal{U}([0,199])$ and $\beta_i=\mathrm{diag}((\beta_{i,1},\beta_{i,2}))$ with $(\beta_{i,1},\beta_{i,2})\in\mathcal{U}([2,10])^2$. The exponential peak is centred within each cell with $\gamma:=(0.5, 0.5)$. An example of this function is displayed on \cref{fig:variance-example-2}. Its contrast (between highest and lowest values) and variance are on a par with the previous conductivity, yet its periodicity is much lower.

\begin{table}
  \centering
  \caption{Cost and variance reductions on regular peaks --- $\var{K^N_\star}=\num{1.2}$}
  \begin{tabular}{S[table-format=1.2]
    S[table-format=2.2]
    S[table-format=2.1]
    S[table-format=2.1]
    S[table-format=2.2]
    S[table-format=4]
    rS[table-format=3]
    S[table-format=2.1]}\toprule
    {$\delta$} & {$\varepsilon$} & {$r_{\qpK{}}$} & {$r_w$} & {$\hat{c}_n/c_Z$} & \multicolumn{2}{c}{$\var{K^N_\star}/\var{X}$} & {$n_Z/n_U$} & {$C_{FE}/C_X$} \\\midrule
    1 & 1 & 22.7 & 1 & 29 & 15 & (\num{17}) & 16 & 5.9\\
    1 & 0.1 & 22.7 & 23.9 & 8.29 & 1307 & (\num{1367}) & 66 & 6.6\\
    0.1 & 1 & 35.3 & 1 & 19 & 15 & (\num{18}) & 13 & 5.0\\
    0.01 & 1 & 49.5 & 1 & 14 & 15 & (\num{18}) & 11 & 4.3\\
    \bottomrule
  \end{tabular}
  \label{tab:var-red-2}
\end{table}

Results are presented in \cref{tab:var-red-2} in the same format as \cref{tab:var-red-1} previously explained, yet without $Z_1$ and $Z_2$ which cannot be used here. We first observe that a cost reduction is achieved for every value of $(\delta,\varepsilon)$ tested but $(1,0.01)$. The results for this latter case are not available since the approximation reached ranks so high ($r_w\geqslant100$) that the test machine ran out of memory%
\footnote{Memory usage went over \SI{8}{\gibi\byte}.};
these computations were costlier than the FEM ones and serve to point out MsLRM limits. Then we see that a low tolerance $\delta$ yields better results since a high conductivity is quite detrimental to MsLRM performance and does produce a better correlated control variate. Indeed, \num{23} is already a great rank increase from the previous test case, although much better than the raw $r_K=\num{400}$. Finally, performance improves when $\varepsilon$ is lowered just enough%
\footnote{Here, a better compromise could be found with $\varepsilon\in]0.1,1[$.}
to go beyond the crude rank-1 approximation of the correctors.

\begin{figure}
  \centering
  \begin{subfigure}{.48\linewidth}
    \centering
    \includegraphics[height=\linewidth,width=\linewidth]{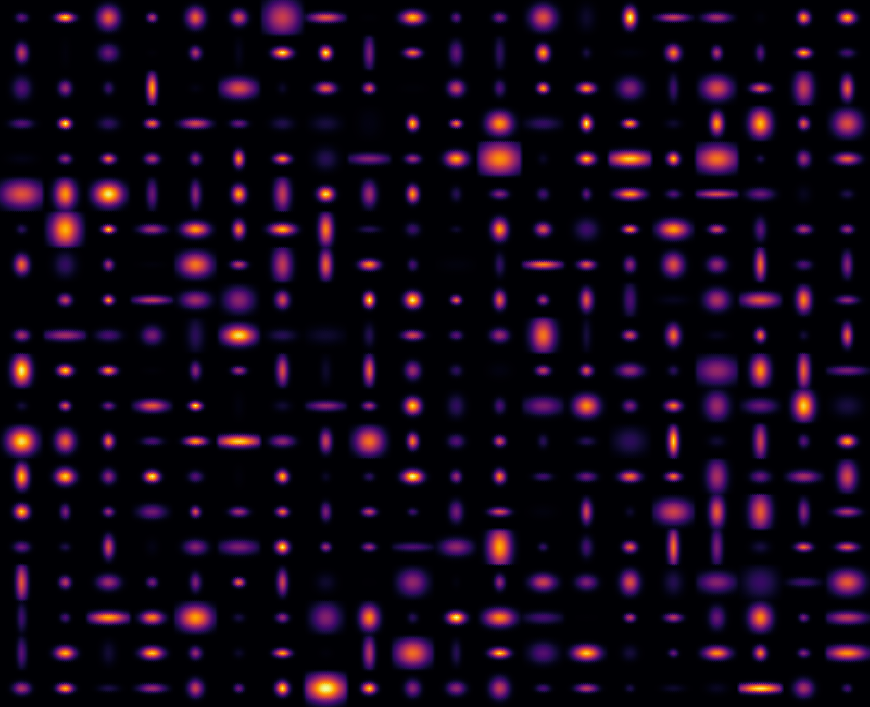}
    \caption{Regular peaks}
    \label{fig:variance-example-2}
  \end{subfigure}
  ~  
  \begin{subfigure}{.48\linewidth}
    \centering
    \includegraphics[height=\linewidth,width=\linewidth]{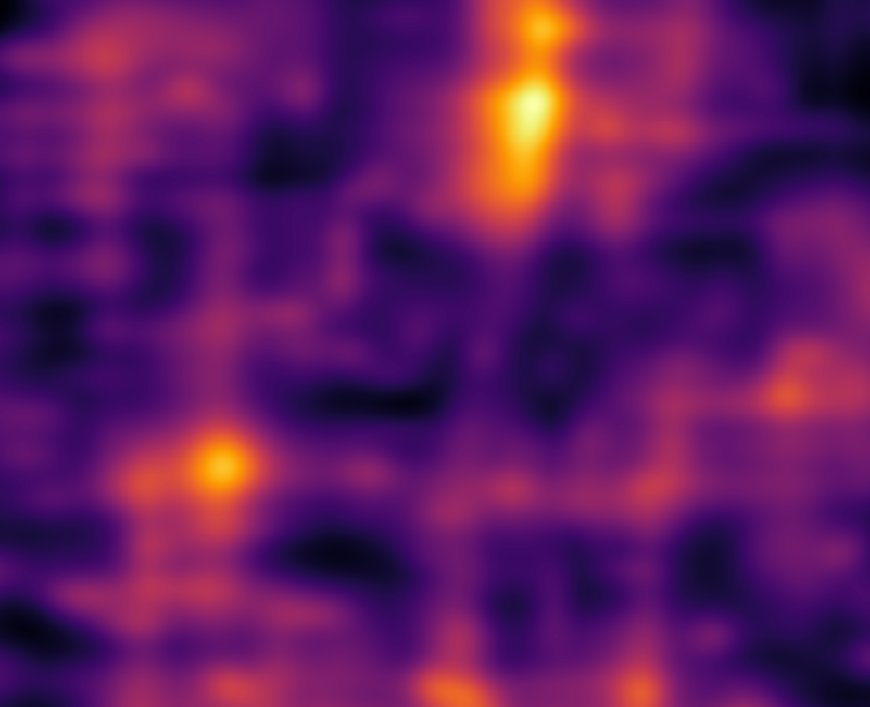}
    \caption{Irregular peaks}
    \label{fig:variance-example-3}
  \end{subfigure}
  \caption{Examples of second and third test conductivity functions on a $20\times 20$ grid}
  \label{fig:variance-examples}
\end{figure}

\subsubsection{Irregular peaks}
\label{sec:irregular-peaks}

This last conductivity function has been chosen to be an irregular variant of the second one, with similar exponential peaks distributed according to a stationary Poisson point process. It reads
\begin{gather*}
  K(x,\omega) = 1 + \sum_{n=1}^{\nu(\omega)} \alpha_n(\omega) e^{-\norm*{\beta_n(\omega)(x-\gamma_n(\omega))}}
\end{gather*}
where  the families $(\alpha_n)$ and $(\beta_n)$ are composed of independent and identically distributed random variables: $\alpha_n\in\mathcal{U}([0, 179])$ and $\beta_n=\mathrm{diag}((\beta_{n,1},\beta_{n,2}))$ with $(\beta_{n,1},\beta_{n,2})\in\mathcal{U}([0.1,2])^2$. The number of peaks $\nu$ follows a Poisson distribution of expectation $(N+R)^2$. $R$ is defined as the radius beyond which a peak's value is below tolerance $\delta$, hence
\begin{gather*}
  \norm*{\alpha_n e^{-\norm*{\beta_n\times\tensor[^t]{(R, R)}{}}}}_{L^\infty(\Omega)}\leqslant\delta , \\
  \text{so } R:=\lambda_{min}(\beta_n)^{-1}\ln\mleft(\frac{\norm*{\alpha_n}_{L^\infty(\Omega)}}{\delta}\mright)
  =10\ln\mleft(\frac{179}{\delta}\mright) \\
  \text{with } \lambda_{min} = \min_{\omega\in\Omega} \min \mleft\{\beta_{n,1}(\omega),\beta_{n,2}(\omega)\mright\}.
\end{gather*}
The centres $\gamma_n$ are independent random variables following the same uniform law $\mathcal{U}([-R,N+R]^2)$. This represents the truncation to $D:=[0,N]^2$ of a spatial Poisson point process over $\setR^2$, accounting for a radius $R$ around each point.
An example of such $K$ is illustrated on \cref{fig:variance-example-3}.

\begin{table}
  \centering
  \caption{Cost and variance reductions on irregular peaks --- $\var{K^N_\star}=\num{3642}$}
  \begin{tabular}{S[table-format=1.2]
    S[table-format=2.2]
    S[table-format=3.1]
    S[table-format=2.1]
    S[table-format=3.2]
    S[table-format=5]
    rS[table-format=4]
    S[table-format=3.1]}\toprule
    {$\delta$} & {$\varepsilon$} & {$r_{\qpK{}}$} & {$r_w$} & {$\hat{c}_n/c_Z$} & \multicolumn{2}{c}{$\var{K^N_\star}/\var{X}$} & {$n_Z/n_U$} & {$C_{FE}/C_X$} \\\midrule
    1 & 1 & 28.7 & 19.8 & 11.1 & 4628 & (\num{8226}) & 237 & 10.0\\
    1 & 0.1 & 28.7 & 45.4 & 3.15 & 4624 & (\num{8276}) & 139 & 3.0\\
    1 & 0.01 & 28.7 & 72.9 & 1.13 & 4623 & (\num{8275}) & 100 & 1.1\\
    0.1 & 1 & 41.8 & 19.6 & 9.42 & 4822 & (\num{8796}) & 225 & 8.6\\
    \bottomrule
  \end{tabular}
  \label{tab:var-red-3}
\end{table}

The results in \cref{tab:var-red-3} follow the same layout as the previous one. Surprisingly, the results are somewhat better than those reported in \cref{sec:regular-peaks}, even though the average of $r_{\qpK{}}$ is slightly higher. As before, every control variate tested reduces the overall cost but $Z_{1,0.01}$, which does not offer further variance reduction than $Z_{1,0.1}$. Unlike the previous case, $Z_{1,1}$ goes beyond the rank-1 approximations of the correctors and is the best control variate here since it achieves almost as good a variance reduction as $Z_{1,0.1}$. We conclude that a crude empirical interpolation of the conductivity along with low-rank (but above the trivial rank 1) approximation of the correctors yields significant cost-reduction (factor \num{10} for $Z_{1,1}$).

Another important parameter to discuss here would be the mesoscopic discretisation. Although the choice was obvious for every other test case presented, it was arbitrary for this one. A different cell size based, for example, on the radius $R$ might be more relevant.

\section{Conclusion}
\label{sec:conclusion}

We proposed a way to reduce the cost of stochastic homogenisation for quasi-periodic conductivities by a low-rank approximation of the corrector functions identified with bivariate functions. The subsequent error on the homogenised quantities has been found to be reasonable on our examples, whereas the computational cost is greatly reduced in most cases, compared to FEM. The complexity was reduced first regarding domain size and then to samples number. The major influence on the cost of this multiscale low-rank method~\parencite[MsLRM, from][]{Ayoul-Guilmard2018a} comes from the rank of the conductivity identified as a tensor; the closer to a periodic function, the lower the rank.

Interestingly, the number of occurrences of a given defect does not change the conductivity rank and therefore has no effect on the complexity of the associated corrector problem. More generally, materials that would not be intuitively considered quasi-periodic have been found to be of low complexity for this MsLRM. A category of such materials is quasi-periodic ones transformed through a random mapping with low rank gradient. Another one is materials following a regular grid and whose grid cells patterns are linear combinations of the same small number of patterns.

This reduced complexity for aperiodic media with mesoscopic structure was exploited by building a low-rank empirical interpolation of the conductivity, computing MsLRM approximations of the associated corrector functions and using the resulting homogenised operator as a control variate to the homogenised operator computed by direct FEM. This was found to provide cost-efficient and well correlated control variates in our numerical experiment, not only for quasi-periodic conductivities but also for materials with not enough periodicity for the MsLRM to be efficient. We illustrated and discussed the influences of tolerances on both empirical interpolation on the conductivity and low-rank approximation of the correctors. Additionally, the MsLRM reduced greatly the a priori cost of the weakly stochastic control variate from~\textcite{Legoll2015a}, which is more specific but less expensive.

Finally, this variance reduction technique, applied here to quantities computed by FEM, could be used in an experimental campaign on measured quantities, with sufficient knowledge of the material to deduce a low-rank approximation of the conductivity as described in \cref{sec:eim}.

\section*{Acknowledgement}

The authors gratefully acknowledge the financial support from the Fondation CETIM.

\printbibliography{}

\end{document}